\documentclass{amsart}
\usepackage{cases}
\usepackage{graphicx}
\usepackage{amssymb}
\usepackage{amsfonts}
\usepackage{amsmath}
\usepackage{amscd}
\usepackage{xypic}
\usepackage{xcolor}
\vfuzz2pt 

\newtheorem{thm}{Theorem}[section]
 \newtheorem{cor}[thm]{Corollary}
 \newtheorem{lem}[thm]{Lemma}
 \newtheorem{prop}[thm]{Proposition}
 \theoremstyle{definition}
 \newtheorem{defn}{Definition}[subsection]
 
 \theoremstyle{remark}
 \newtheorem{rem}{Remark}[section]
 \newtheorem{example}{Example}
 
 \numberwithin{equation}{section}
 
 \DeclareMathOperator{\IM}{Im}


\begin{document}

\title[Orbit configuration spaces ]
 {Orbit configuration spaces of small covers and quasi-toric manifolds }

\author{Junda Chen, Zhi L\"u and Jie Wu}

\address{School of Mathematical Sciences, Fudan University, Shanghai, 200433,  China}
\email{072018012@fudan.edu.cn}
\address{School of Mathematical Sciences, Fudan University\\ Shanghai\\ 200433\\ China}
\email{zlu@fudan.edu.cn}
\address{Department of Mathematics\\
National University of Singapore\\
Singapore 119260\\
Republic of Singapore}\email{matwujie@math.nus.edu.sg}
\urladdr{http://www.math.nus.edu.sg/\~{}matwujie}
\thanks{Research of the first author and the second author is supported by grants from NSFC (No. 10931005), Shanghai NSF (No. 10ZR1403600) and RFDP (No. 20100071110001).
Research of the third author is supported in part by the AcRF Tier 1 (WBS No. R-146-000-137-112) and Tier 2 (WBS No. R-146-000-143-112) of MOE of Singapore and a grant (No. 11028104) of NSFC}
\keywords{Orbit configuration space,  small cover, quasi-toric manifold, (real) moment-angle manifold, Euler characteristic, homotopy type}
\subjclass[2010] { Primary 55R80,  57S25, 52B20; Secondary 55P91,  55N91, 14M25.}
\email{}


\begin{abstract}
In this article, we investigate the orbit configuration spaces of some equivariant closed manifolds over simple convex polytopes in toric topology, such as small covers,  quasi-toric manifolds and (real) moment-angle manifolds; especially for the cases of small covers and  quasi-toric manifolds.
These kinds of orbit configuration spaces are all non-free and noncompact, but still built via simple convex polytopes.
 We obtain an explicit formula of Euler characteristic for orbit configuration spaces of small covers and quasi-toric manifolds in terms of the $h$-vector of a simple convex polytope. As a by-product of our method, we also obtain a formula of Euler characteristic for the classical
configuration space, which generalizes the F\'elix-Thomas formula.
In addition, we also study the homotopy type of such orbit
configuration spaces. In particular, we determine an equivariant
strong deformation retract of the orbit configuration space of 2
distinct orbit-points in a small cover or a quasi-toric manifold,
which turns out that we are able to further study the algebraic
topology of such an orbit configuration space by using the
Mayer-Vietoris spectral sequence.
\end{abstract}

\maketitle

\section{Introduction}

\subsection{Notion and motivation} Let $G$ be a topological group and let $M$ be a $G$-space.
The \textit{$($ordered$)$ orbit configuration space} $F_G(M,k)$ is
defined by
$$
F_G(M,k)=\{(x_1,\ldots,x_k)\in M^{\times k} \ | \ G(x_i) \cap G(x_j)=\emptyset \textrm{ for } i\not=j\}
$$
with subspace topology, where $k\geq 2$ and $G(x)$ denotes the orbit at $x$. In the case where  $G$
acts trivially on $M$, the space $F_G(M,k)$ is the classical
configuration space denoted by $F(M,k)$.

The notion of configuration space had been introduced in physics in 1940s~\cite{n,vt} concerning the topology of configurations with various study on this important object since then. In mathematics, configuration spaces were first introduced by Fadell and
Neuwirth~\cite{fn} in 1962 with various applications~\cite{ar,bcww,bir,bo,co,t,v}. Since 1990s, the notion of configurations was introduced in robotics community to study safe-control problems of robots.
A new-created field in mathematics named \textit{topological robotics} was recently established by Ghrist and Farber~\cite{fa,Gh}, where the topology of configuration spaces on graphs plays an important role.
The orbit configuration spaces with labels provide combinatorial models for equivariant loop
spaces~\cite{x}. Moreover orbit configuration space is an analogy to fiber-type
arrangements~\cite{dco}. The fundamental groups of orbit configuration spaces enrich the theory of braids~\cite{ckx,cx}.

If the group $G$ acts properly discontinuously on a manifold $M$, there are various fibrations available related to $F_G(M,k)$~\cite{x}. Thus the standard methods of spectral sequences in algebraic topology can be used for studying the cohomology of $F_G(M,k)$. In particular, the cohomology of $F_{\mathbb{Z}_2}(S^n,k)$ has been determined in~\cite{fz,x2}, where $\mathbb Z_2$ acts (freely) on $S^n$ by antipodal map. In the case that $G$ does not freely on a manifold $M$,  the determination of the homotopy type or cohomology of $F_G(M,k)$ becomes much harder  because algebraic topology lacks sufficiently effective tools. For instance, the classical Fadell-Neuwirth fibration~\cite{fn} fails in non-free cases in general.

In 1991, Davis and Januszkiewicz \cite{dj} introduced four classes
of particularly nicely behaved manifolds over simple convex
polytopes--small covers, quasi-toric manifolds  and (real)
moment-angle manifolds, which have become  important  objects in
toric topology. Note that in their paper~\cite{dj}, Davis and
Januszkiewicz used the terminology ``toric manifold" rather than
``quasi-toric manifold", but the former has been used in algebraic
geometry as the meaning of smooth nonsingular toric variety, so
Buchstaber and Panov \cite{bp} began with the use of the terminology
``quasi-toric manifold". In addition,  (real) moment-angle manifolds
were named by Buchstaber and Panov \cite{bp} later  when they
studied the topology of (real) moment-angle manifolds as
submanifolds in polydisks. A {\em quasi-toric manifold} (resp. {\em
small cover}), as  the topological version of a compact non-singular
toric variety (resp. real toric variety),  is a smooth closed
manifold $M$ of dimension $2n$ (resp. dimension $n$) with a locally
standard action of torus $T^n$ (resp. real torus ${\Bbb Z}_2^n$)
such that its orbit space is a simple convex $n$-polytope $P$. A
{\em $($real$)$ moment-angle manifold} can directly be constructed
from a simple convex polytope $P$ such that it admits an action of
real torus or torus with $P$ as its orbit space. There are strong
links between topology and geometry of these equivariant manifolds
and combinatorics of polytopes.
 In this article, we put these equivariant manifolds into the framework of  orbit configuration spaces, especially for the cases of small covers and quasi-toric manifolds. In other words, we pay  much more attention to non-free orbit configuration space $F_{G^n_d}(M,k)$ for a  $dn$-dimensional $G^n_d$-manifold $\pi_d: M\longrightarrow P$ over a simple convex $n$-polytope $P$,  $d=1, 2$, where $M$ is a small cover and $G^n_1={\Bbb Z}_2^n$  when $d=1$, and a quasi-toric manifold and $G^n_2=T^n$ when $d=2$. We still expect that there is an essential connection between topology and geometry of $F_{G^n_d}(M,k)$ and combinatorics of $P$.

\subsection{Statement of main results} Our first result is an explicit formula for the Euler
characteristic of $F_{G^n_d}(M,k)$ in terms of the $h$-vector $(h_0,
h_1, ..., h_n)$ of $P$, and in particular,
$\chi(F_{T^n}(M,k))=\chi(F(M,k))$ if $d=2$. Let ${\bf
h}_P(t)=h_0+h_1t+\cdots+h_nt^n$ be a polynomial in ${\Bbb Z}[t]$.
Then our result is stated as follows.

\begin{thm} \label{main thm}
Let $\pi_d: M\longrightarrow P$ be a $dn$-dimensional $G^n_d$-manifold over a simple convex $n$-polytope $P$ where $d=1,2$.   Then  the Euler characteristic of $F_{G_d^n}(M, k)$ is
$$\chi(F_{G_d^n}(M, k))=
\begin{cases} (-1)^{kn}\sum\limits_{I=(k_1,..., k_s)}\mathcal{C}_I\prod\limits_{i=1}^s{\bf h}_P(1-2^{k_i}) & \text{ if } d=1\\
\chi(F(M,k))=\sum\limits_{I=(k_1,..., k_s)}\mathcal{C}_I\big({\bf h}_P(1)\big)^s & \text{ if } d=2
\end{cases} $$
where $I=(k_1,..., k_s)$ runs over all partitions of $k$, $\mathcal{C}_I=\frac{k!(-1)^{k-s}}{r_1!r_2!\cdots
r_s!k_1k_2\cdots k_s}$ and $r_i$ is the  number of times that $k_i$ appears in $I$.
\end{thm}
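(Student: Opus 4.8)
The plan is to reduce the computation of $\chi(F_{G_d^n}(M,k))$ to a sum over partitions, exactly as in the F\'elix--Thomas approach to classical configuration spaces, and then to feed in the toric-topological input: the cell structure (over $\Z_2$ coefficients in the case $d=1$, rationally for $d=2$) coming from the polytope $P$. First I would recall that for any reasonable space $X$ the Euler characteristic of the $k$-th configuration space $F(X,k)$ is a universal polynomial in the Euler characteristics of $X$, $X\setminus\{pt\}$, and more generally in the ``$\ell$-point-deleted'' Euler characteristics; concretely, inclusion--exclusion over the diagonal arrangement (or the Fadell--Neuwirth-type stratification) gives
\[
\chi(F(X,k))=\sum_{I=(k_1,\dots,k_s)}\mathcal{C}_I\prod_{i=1}^s\chi_{k_i}(X),
\]
where $I$ runs over partitions of $k$, $\mathcal{C}_I$ is the stated combinatorial coefficient, and $\chi_{k_i}(X)$ is an Euler characteristic attached to configurations of a single ``block'' of size $k_i$. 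The main point is then to identify $\chi_{k_i}$ in the orbit-configuration setting.

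For $d=2$ the orbit map $\pi_2\colon M\to P$ has tori as generic orbits, but the key observation is that passing to orbits does not change Euler characteristics: $\chi(T^n)=0$, so deleting a whole orbit $T^n(x)$ from $M$ changes $\chi$ the same way deleting a point does in the multiplicative bookkeeping, and in fact the fibration-type arguments of~\cite{x} together with $\chi(T^n)=0$ force $\chi(F_{T^n}(M,k))=\chi(F(M,k))$. Here $\chi(M)={\bf h}_P(1)$ is the classical Davis--Januszkiewicz count of faces/fixed points, and since $\chi(T^n)=0$ all the ``deleted'' Euler characteristics $\chi_{k_i}$ collapse to $\chi(M)={\bf h}_P(1)$ independently of $k_i$, yielding $\sum_I\mathcal{C}_I({\bf h}_P(1))^s$. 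For $d=1$ the generic orbit is $\Z_2^n$, which has $2^n$ points and hence nonzero Euler characteristic, so the deletions genuinely depend on the block size: removing an orbit of a point whose isotropy has rank $j$ removes $2^{n-j}$ points, and removing $k_i$ such orbits in a single $\Z_2^n$-equivariant block amounts, after the standard Davis--Januszkiewicz decomposition of $M$ into $2^{n-j}$-fold covers of faces of $P$, to evaluating the $h$-polynomial at $1-2^{k_i}$; bookkeeping the sign from the $n$-dimensionality of $M$ contributes the global $(-1)^{kn}$ and the $(-1)^{k-s}$ inside $\mathcal{C}_I$. Thus $\chi_{k_i}(M)=(-1)^{?}{\bf h}_P(1-2^{k_i})$ with the signs absorbed as stated.

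The technical heart, and the step I expect to be the main obstacle, is the precise identification of the ``block'' Euler characteristic $\chi_{k_i}$ with ${\bf h}_P(1-2^{k_i})$ in the $d=1$ case. This requires more than a naive count: one must stratify $M$ by the face of $P$ over which a point lies (equivalently by isotropy rank $j$), observe that over the relative interior of a codimension-$j$ face $F$ the orbit map is a trivial $\Z_2^{n-j}$-bundle, and then track how configurations of $k_i$ \emph{orbits} distribute among strata. The $h$-vector enters because $\sum_F t^{\dim F}=\sum_j h_j t^{?}$ (Dehn--Sommerville / the defining relation of $h$), so summing the contributions $(-2^{k_i})^{\,\mathrm{codim}\,F}$ over all faces $F$ of $P$ reassembles exactly into ${\bf h}_P(1-2^{k_i})$; making this reassembly rigorous, and checking that the multiplicativity over blocks survives the non-free action (so that the Fadell--Neuwirth-type inductive fibrations of~\cite{x}, even though they may fail on the nose, still compute Euler characteristics correctly via the additivity of $\chi$ over the equivariant stratification), is where the real work lies. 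Once that identification is in place, substituting $\chi_{k_i}$ into the universal partition formula for $\chi(F(-,k))$ and simplifying the signs gives both cases of the theorem simultaneously.
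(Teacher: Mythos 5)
Your proposal is a strategy sketch rather than a proof: both pillars of the argument are asserted, not established, and the statement's correctness cannot be recovered from what you wrote. First, the ``universal partition formula'' $\chi(F(X,k))=\sum_I\mathcal{C}_I\prod_i\chi_{k_i}(X)$ with the stated coefficients is essentially the thing to be proven, and citing the F\'elix--Thomas result does not supply it (that formula is a special case of what this theorem generalizes). The paper obtains the partition structure by a concrete inclusion--exclusion over the $\binom{k}{2}$ subspaces $\Delta_{i,j}(P^{\times k})$: intersections of these are indexed by (vertex-full) subgraphs $\Gamma$ of the complete graph $\mathcal{K}_k$, connected components of $\Gamma$ give products of strong diagonals, and $\mathcal{C}_I$ appears as the signed count $\sum_{k(\Gamma)=I}(-1)^{|E(\Gamma)|}$. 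Evaluating this count to get $\frac{k!(-1)^{k-s}}{r_1!\cdots r_s!k_1\cdots k_s}$ is itself nontrivial; the paper does it indirectly, by first proving $\chi(F(M,k))=(-1)^{kn}\prod_{i=0}^{k-1}(\chi(M)-i)$ through the clever specialization of $M$ to finite discrete spaces (so the polynomial in $\chi(M)$ has roots $0,1,\dots,k-1$), extracting $\mathcal{C}_{(k)}=(-1)^{k-1}(k-1)!$, and then multiplying over components. None of this appears in your proposal, and you explicitly defer the other pillar --- the identification of the block contribution with ${\bf h}_P(1-2^{k_i})$ --- calling it ``the main obstacle''; in the paper this is a short, direct cell count of $(\pi_d^{\times\ell})^{-1}(\Delta(P^{\times\ell}))$ (over the interior of an $i$-face the preimage is $2^{i\ell}$ copies when $d=1$, or $T^{i\ell}\times$ a cell when $d=2$), combined with the $f$--$h$ relation. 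Note also that the relevant object is the preimage of the strong diagonal, i.e.\ an intersection of the $\Delta_{i,j}$'s, not an ``orbit-deleted'' Euler characteristic as your stratification picture suggests.

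Two further points would fail as written. For $d=2$ you appeal to ``fibration-type arguments of~\cite{x}'' to force $\chi(F_{T^n}(M,k))=\chi(F(M,k))$, but those fibrations are exactly what is unavailable here: the action is non-free (the paper stresses that Fadell--Neuwirth fails), and the equality is instead a consequence of the computation $\chi\big((\pi_2^{\times\ell})^{-1}(\Delta(P^{\times\ell}))\big)=\chi(M)$ (positive-dimensional torus strata contribute zero) together with Lefschetz duality. Finally, your sign bookkeeping is misattributed: the global factor $(-1)^{dkn}$ comes from Lefschetz duality applied to the complement of the diagonal preimage inside the closed manifold $M^{\times k}$, while $(-1)^{k-s}$ is produced by the signed graph count inside $\mathcal{C}_I$, not by ``the $n$-dimensionality of $M$''. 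To turn your outline into a proof you would need to supply precisely these missing steps, which constitute the actual content of the paper's argument.
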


Our method for proving this theorem is to investigate the combinatorial structure
on $F_{G_d^n}(M, k)$. As a consequence of this method, we can also give a formula for the
 Euler characteristic of a non-equivariant configuration space
$F(M, k)$ in terms of a polynomial of $\chi(M)$.

\begin{thm}\label{non-eq}
Let $M$ be a compact triangulated homology $n$-manifold. Then
$$\chi(F(M, k))=(-1)^{kn}\prod_{i=0}^{k-1}(\chi(M)-i)=(-1)^{kn}k!{{\chi(M)}\choose k}.$$
\end{thm}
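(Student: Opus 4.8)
The plan is to compute the \emph{compactly supported} Euler characteristic $\chi_c\bigl(F(M,k)\bigr)$ by inclusion--exclusion over the lattice of set partitions, and then recover the ordinary Euler characteristic via Poincaré duality. Recall that on triangulable spaces $\chi_c$ is additive over locally closed decompositions, multiplicative over products, and equal to the ordinary Euler characteristic on compact spaces; write $c=\chi(M)=\chi_c(M)$. First I would stratify $M^{\times k}$ by the fat diagonal: for a partition $\lambda$ of $\{1,\dots,k\}$ into $s$ nonempty blocks, let $D_\lambda\subseteq M^{\times k}$ be the locally closed set of tuples $(x_1,\dots,x_k)$ with $x_i=x_j$ exactly when $i$ and $j$ lie in a common block of $\lambda$. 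These sets are pairwise disjoint, they cover $M^{\times k}$, and recording one coordinate per block (ordered, say, by least element) gives a homeomorphism $D_\lambda\cong F(M,s)$. Additivity of $\chi_c$ then yields
\[
c^{\,k}=\chi_c\bigl(M^{\times k}\bigr)=\sum_{\lambda}\chi_c(D_\lambda)=\sum_{s=1}^{k}S(k,s)\,\chi_c\bigl(F(M,s)\bigr),
\]
where $S(k,s)$ is the number of partitions of a $k$-element set into $s$ nonempty blocks, i.e.\ the Stirling number of the second kind.

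This identity holds for every $k\geq 1$, and it is precisely the classical polynomial identity $x^{k}=\sum_{s}S(k,s)\,x(x-1)\cdots(x-s+1)$ evaluated at $x=c$. Since the matrix $\bigl(S(k,s)\bigr)_{k,s}$ is unitriangular, Stirling (Möbius) inversion forces
\[
\chi_c\bigl(F(M,k)\bigr)=c(c-1)\cdots(c-k+1)=\prod_{i=0}^{k-1}(c-i).
\]
It remains to pass from $\chi_c$ to $\chi$. Because $M$ is a boundaryless homology $n$-manifold over $\mathbb{Q}$, the product $M^{\times k}$ is a homology $kn$-manifold and its open subset $F(M,k)$ is again a boundaryless homology $kn$-manifold, with finite rational Betti numbers since it is homotopy equivalent to a finite complex. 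Poincaré duality over $\mathbb{Q}$ gives $H^{i}_c\bigl(F(M,k)\bigr)\cong H_{kn-i}\bigl(F(M,k)\bigr)$ --- with the orientation-twisted coefficient system if $F(M,k)$ is non-orientable, which does not change Euler characteristics since that system has rank one --- hence $\chi_c\bigl(F(M,k)\bigr)=(-1)^{kn}\chi\bigl(F(M,k)\bigr)$. Combining the two displays gives $\chi(F(M,k))=(-1)^{kn}\prod_{i=0}^{k-1}(\chi(M)-i)$, and the second equality in the statement is just the definition of $\binom{\chi(M)}{k}$.

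In this plan the combinatorial core (the Stirling inversion) and the final rewriting are purely formal; the two inputs that demand care are the ones invoked above. The first is the additivity of $\chi_c$ over the fat-diagonal stratification: this is standard for triangulable spaces, because the triangulation of $M$ induces, after one subdivision, a triangulation of $M^{\times k}$ in which every closed diagonal is a subcomplex, so each $D_\lambda$, being a difference of such subcomplexes, has a well-defined finite Euler characteristic and the alternating sums telescope. The second, and the genuine obstacle, is the identity $\chi_c(F(M,k))=(-1)^{kn}\chi(F(M,k))$: this is exactly where the hypothesis ``homology $n$-manifold'' is used and where the sign $(-1)^{kn}$ originates, and it rests on Poincaré duality for homology manifolds over a field. (For a genuine smooth manifold one could instead iterate the Fadell--Neuwirth fibration $F(M,k)\to F(M,k-1)$, whose fibre is $M$ with $k-1$ points removed, and use multiplicativity of $\chi$ over a connected base together with $\chi(M\setminus\{\mathrm{pt}\})=\chi(M)-(-1)^{n}$; but this fibration need not exist when $M$ is only a homology manifold, so the stratification argument is the robust one and is also the one that fits the combinatorial method of Theorem~\ref{main thm}.)
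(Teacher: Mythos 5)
Your proposal is correct, but it takes a genuinely different route from the paper's. The paper first proves the polynomial formula $\chi(F(M,k))=(-1)^{kn}\sum_{I}\mathcal{C}_I\,\chi(M)^s$ (Theorem~\ref{non-eq1}) by inclusion--exclusion over vertex-full subgraphs of $\mathcal{K}_k$ combined with Lefschetz duality, and then pins down the universal polynomial $g(t)$ by a substitution trick: taking $M$ to be a set of $l<k$ points forces $F(M,k)=\emptyset$ by the pigeonhole principle, so $0,1,\dots,k-1$ are roots of $g$, and the leading coefficient is $\mathcal{C}_{(1,\dots,1)}=1$. You instead stratify $M^{\times k}$ by the partition lattice, use additivity and multiplicativity of the compactly supported Euler characteristic to get $\chi(M)^k=\sum_s S(k,s)\,\chi_c(F(M,s))$, and invert the Stirling identity to obtain the falling factorial directly; a single application of Poincar\'e--Lefschetz duality for (possibly non-orientable) homology manifolds then supplies the sign $(-1)^{kn}$, which is exactly the same duality input the paper uses and is where the homology-manifold hypothesis enters in both arguments. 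Your route is shorter and avoids computing the coefficients $\mathcal{C}_I$ altogether, since the Stirling inversion does the combinatorics for you; the paper's longer route has the side benefit that the same point-set evaluation yields $\mathcal{C}_{(k)}=(-1)^{k-1}(k-1)!$ (Corollary~\ref{cor}), which is then fed back into the proof of Theorem~\ref{main thm}, whereas your argument leaves those coefficients undetermined. Two minor points of hygiene: the claim that the diagonals become subcomplexes ``after one subdivision'' is better phrased as the standard PL fact that any finite family of subpolyhedra of a compact polyhedron can be made subcomplexes of a common triangulation; and you could sidestep the orientation local system entirely by running the duality step with $\mathbb{Z}_2$ coefficients, since Euler characteristics are independent of the field.
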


\begin{rem} The above formula can be rewritten as
$$
1+\sum\limits_{k=1}^\infty \frac{(-1)^{kn}\chi(F(M, k))}{k!}t^k=1+\sum_{k=1}^\infty\binom{\chi(M)}{k}t^k=(1+t)^{\chi(M)}.
$$
 Let $n$ be even. Then we obtain the F\'elix-Thomas formula~\cite[Theorem B]{FT}. Hence Theorem~\ref{non-eq} generalizes the F\'elix-Thomas formula.
 \end{rem}

Next we shall be concerned with the homotopy type of $F_{G_d^n}(M,k)$ for   $n\geq 1$.  We first consider the case  $n=1$. In this case, we obtain

\begin{thm}\label{theorem1.3}
Let $\pi_d: M\longrightarrow P$ be a $d$-dimensional $G_d^1$-manifold over $P$. Then, when $d=1$,  $F_{\mathbb Z_2}(M,k)$
has the same  homotopy type as  $k!2^{k-2}$ points, and when $d=2$, $F_{S^1}(M, k)$ has the same homotopy type as a disjoint union of $k!$ copies of $T^{k-2}$.
\end{thm}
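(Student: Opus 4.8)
The plan is to reduce the statement to elementary point-set topology of the orbit map, using that for $n=1$ the polytope is forced to be a $1$-simplex. A simple convex $1$-polytope is a closed interval, so $P\cong[0,1]$, and every $d$-dimensional $G_d^1$-manifold over it is equivariantly homeomorphic to the circle $S^1$ with the reflection $\mathbb Z_2$-action when $d=1$, and to the $2$-sphere $S^2$ with the standard rotation $S^1$-action when $d=2$: in each case the non-degenerate characteristic function on the two facets of $[0,1]$ is essentially unique, and one reads $M$ off directly from the Davis--Januszkiewicz construction (the $S^1$ or $\mathbb Z_2$ being the right-translation action on the group factor, with the group collapsed over each endpoint). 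In both cases the orbit map $q\colon M\to M/G_d^1\cong[0,1]$ has exactly two singular fibres, the fixed points $q^{-1}(0)$ and $q^{-1}(1)$, while every other fibre is a single free orbit ($\mathbb Z_2$, i.e. two points, if $d=1$; a circle if $d=2$). Since $G_d^1(x_i)\cap G_d^1(x_j)=\emptyset$ is equivalent to $q(x_i)\neq q(x_j)$, the first step is the identification
\[ F_{G_d^1}(M,k)=(q^{\times k})^{-1}\bigl(F([0,1],k)\bigr), \]
after which one writes $F([0,1],k)$ as the disjoint union of its $k!$ convex, open-and-closed chambers $R_\sigma=\{0\le y_{\sigma(1)}<\cdots<y_{\sigma(k)}\le 1\}$, $\sigma\in S_k$, and pulls these back to get $F_{G_d^1}(M,k)=\bigsqcup_{\sigma\in S_k}Y_\sigma$ with each $Y_\sigma=(q^{\times k})^{-1}(R_\sigma)$ open and closed. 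It then suffices to show that $Y_\sigma$ is homotopy equivalent to $2^{k-2}$ points when $d=1$ and to a single copy of $T^{k-2}$ when $d=2$, and to sum over the $k!$ choices of $\sigma$.

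The core is an explicit deformation retraction of $Y_\sigma$, built in three successive stages and described through the height functions $h_i:=q(x_i)$, where at each stage a point is moved along the unique radial direction in its fibre --- the short arc toward the relevant fixed point when $d=1$, the meridian through the point when $d=2$: (i) slide the lowest point $x_{\sigma(1)}$ down to $q^{-1}(0)$ along $h_{\sigma(1)}(t)=(1-t)h_{\sigma(1)}$; (ii) symmetrically slide the highest point $x_{\sigma(k)}$ up to $q^{-1}(1)$; (iii) slide each intermediate point $x_{\sigma(i)}$, $2\le i\le k-1$, to the canonical height $c_i:=i/k$ via $h_{\sigma(i)}(t)=(1-t)h_{\sigma(i)}+tc_i$, keeping fixed its longitude (if $d=2$) or its side of the fixed point (if $d=1$). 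Convex combinations preserve the strict chain $h_{\sigma(1)}<\cdots<h_{\sigma(k)}$ and never move an interior height onto $\{0,1\}$, so each stage stays inside $Y_\sigma$, and the common endpoint is the subspace $Z_\sigma$ of configurations with $x_{\sigma(1)}=q^{-1}(0)$, $x_{\sigma(k)}=q^{-1}(1)$ and $q(x_{\sigma(i)})=c_i$ for $2\le i\le k-1$. When $d=1$ this $Z_\sigma$ is exactly the set of $2^{k-2}$ sign choices for the intermediate points, a discrete space of $2^{k-2}$ points; when $d=2$ it is the $(k-2)$ unconstrained longitudes, a copy of $T^{k-2}$. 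Assembling over $\sigma$ yields $F_{\mathbb Z_2}(M,k)\simeq k!\,2^{k-2}$ points and $F_{S^1}(M,k)\simeq$ a disjoint union of $k!$ copies of $T^{k-2}$.

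The step I expect to demand the most care is the \emph{continuity} of stages (i) and (ii) across the singular fibres: the naive assignment ``the point at height $s$ on the meridian through $x$'' is not continuous as $x$ approaches a pole while $s$ stays bounded away from that pole, since the limiting meridian is undefined. The whole point of pushing the lowest coordinate only \emph{toward} $q^{-1}(0)$ (so that $s\le h(x)\to 0$), and symmetrically in (ii), is that this degeneracy never arises; and in (iii) the intermediate coordinates are trapped inside $q^{-1}\bigl((0,1)\bigr)$, the regular part of $M$, where the radial flow is manifestly continuous. Once this is verified the three stages compose to the desired deformation retraction and the theorem follows. The remaining points --- identifying $M$ over $[0,1]$ with $S^1$ or $S^2$ carrying the stated actions, checking that the chambers $R_\sigma$ are convex and open-and-closed in $F([0,1],k)$, and checking that each stage respects the ordering --- are routine.
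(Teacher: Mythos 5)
Your proposal is correct, and its skeleton matches the paper's: identify $M$ as $S^1$ with a reflection (resp.\ $S^2$ with a rotation) over the $1$-simplex $[0,1]$, split the configuration space according to the $k!$ orderings of the heights, and retract each piece to configurations whose two extreme coordinates sit at the fixed points over the endpoints while the intermediate coordinates lie over interior points, so that the answer is read off from the fibre $G_d^{k-2}$. Where you differ is in how the retraction is produced. The paper works entirely in the orbit space: it retracts $F([0,1],k)$ linearly onto the $k!$ points $\bigl(\frac{\sigma(1)-1}{k-1},\ldots,\frac{\sigma(k)-1}{k-1}\bigr)$, takes for each component $C_\sigma$ a $(k-2)$-dimensional retract $R_\sigma$ inside the interior of a $(k-2)$-face of $[0,1]^{\times k}$, and then invokes the local product structure of Davis--Januszkiewicz (their Lemma 4.1) to conclude $(\pi_d^{\times k})^{-1}(R_\sigma)\cong G_d^{k-2}\times R_\sigma$; the passage from a retraction of $C_\sigma$ downstairs to a homotopy equivalence of $(\pi_d^{\times k})^{-1}(C_\sigma)$ upstairs is left implicit there (it is the kind of lifting argument the paper only carries out carefully later, in the $k=2$ case of Theorem 4.4). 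You instead build the homotopy directly in $M^{\times k}$, in three fibrewise stages, and you correctly isolate the only delicate point, namely continuity of the meridian/arc flow as a coordinate approaches a singular fibre, which is harmless precisely because that coordinate is only pushed toward the pole it is approaching. So your argument is a bit more hands-on and makes the lifting step explicit, at the cost of the continuity verification; the paper's is shorter because it delegates the total-space structure to the standard local triviality over open faces. Both yield $k!\,2^{k-2}$ points for $d=1$ and $k!$ copies of $T^{k-2}$ for $d=2$.
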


\begin{rem}
The classical configuration spaces on the circle is related to hyperbolic Dehn fillings~\cite{YNK}. The orbit version might give some additional information.
\end{rem}

For general $M$ and $k$, the spaces $F_{G_d^n}(M,k)$  can be expressed as an intersection of the subspaces of $M^{\times k}$ which are homeomorphic to $M^{\times (k-2)}\times F_{G_d^n}(M,2)$ under coordinate permutations (see Proposition~\ref{constr}). Thus the study on $F_{G_d^n}(M,2)$ is the first step for the general cases. In this article we  focus on this case  to give an experimental investigation of the homotopy type of $F_{G_d^n}(M,2)$, where the
combinatorial methods successfully overcome the technical difficulties in this case. The spaces
$F_{G_d^n}(M,k)$  for general $k$ will be explored in our subsequent work.
By the reconstruction of small covers and quasi-toric manifolds,
 we are able to determine an equivariant  strong deformation retract of $F_{G_d^n}(M,2)$ in terms of the combinatorial data from $P$ via $\pi_d$.
 The result is stated as follows.

 \begin{thm} \label{theorem1.4}
 Let $\pi_d: M\longrightarrow P$ be a $dn$-dimensional $G_d^n$-manifold over a simple convex polytope $P$. Then
 there is an equivariant strong deformation retraction of $F_{G_d^n}(M,2)$ onto
$$X_d(M)=\bigcup_{F_1, F_2\in \mathcal{F}(P)\
\atop F_1\cap F_2=\emptyset} (\pi_d^{-1})^{\times 2}(F_1\times F_2)$$
  where  $\mathcal{F}(P)$ is the set of all faces of $P$.
  \end{thm}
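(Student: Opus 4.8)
The plan is to build the deformation retraction of $F_{G_d^n}(M,2)$ onto $X_d(M)$ by working downstairs on the polytope $P$ and lifting equivariantly. Recall that a small cover or quasi-toric manifold is reconstructed as $M = (G_d^n \times P)/\!\sim$, where the equivalence collapses the $i$-th coordinate circle/$\mathbb{Z}_2$-factor over the interior of a facet $F_i$ according to the characteristic function $\lambda$; thus $\pi_d \colon M \to P$ is the orbit map and each point $p \in P$ has isotropy the subgroup $G(F(p))$ determined by the smallest face $F(p)$ containing $p$. The first step is to translate the condition $G_d^n(x_1) \cap G_d^n(x_2) = \emptyset$ into polytope language: since orbits are exactly the fibers of $\pi_d$, we have $(x_1,x_2) \in F_{G_d^n}(M,2)$ if and only if $\pi_d(x_1) \neq \pi_d(x_2)$, so $F_{G_d^n}(M,2) = (\pi_d)^{-1 \times 2}(P \times P \setminus \Delta_P)$ where $\Delta_P$ is the diagonal. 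Likewise $X_d(M) = (\pi_d)^{-1 \times 2}\big(\bigcup_{F_1 \cap F_2 = \emptyset} F_1 \times F_2\big)$. So it suffices to produce a strong deformation retraction of $P \times P \setminus \Delta_P$ onto $Y = \bigcup_{F_1 \cap F_2 = \emptyset} F_1 \times F_2$ and then check it lifts.

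The core geometric step is constructing the retraction on $P \times P \setminus \Delta_P$. Here I would use the fact that $P$ is a simple convex polytope, hence its boundary complex and face poset are well understood, and the key combinatorial input is: for two faces $F_1, F_2$ of $P$, $F_1 \cap F_2 = \emptyset$ if and only if no facet contains both, equivalently the normal fan cones of $F_1$ and $F_2$ are "complementary" in the appropriate sense. Given $(p_1, p_2)$ with $p_1 \neq p_2$, I want to push $p_1$ into a face $F_1$ and $p_2$ into a face $F_2$ with $F_1 \cap F_2 = \emptyset$. The natural device is a "pull apart along the separating hyperplane" homotopy: choose, continuously in $(p_1,p_2)$, an affine functional separating $p_1$ from $p_2$ (this can be done by, e.g., taking the functional determined by the segment $[p_1,p_2]$), then flow $p_1$ toward the face of $P$ minimizing this functional and $p_2$ toward the face maximizing it. Simplicity of $P$ guarantees these minimal/maximal faces vary controllably, and one argues that the limiting faces $F_1, F_2$ are disjoint because they lie on opposite sides of a hyperplane meeting the interior of the segment. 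One must also handle the intermediate times of the homotopy so that the pair stays in $P\times P \setminus \Delta_P$ (i.e. the two points never collide), which is automatic if $p_1$ only moves within $\{f \le f(p_1)\}$ and $p_2$ within $\{f \ge f(p_2)\}$ with strict inequality preserved, and so that the homotopy restricts to the identity on $Y$ — this requires the separating-functional choice to be compatible with pairs already in $Y$, which may force a more careful, face-poset-indexed (rather than purely metric) construction.

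Once the downstairs retraction $H \colon (P\times P\setminus\Delta_P) \times [0,1] \to P\times P\setminus\Delta_P$ is in hand, the remaining step is to lift it to an equivariant strong deformation retraction upstairs. Because $\pi_d \times \pi_d$ is the orbit map for the $G_d^n \times G_d^n$ action and $M = (G_d^n\times P)/\!\sim$, a homotopy downstairs lifts to $M^{\times 2}$ provided it is compatible with the collapsing relation — precisely, if $H_t(p)$ lies in a face $F'$ with $F \subseteq F'$ whenever $p \in F$, then the isotropy only shrinks along the homotopy and one can define the lift on representatives $[g,p]$ by $[g, H_t(p)]$, which is well-defined and continuous. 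The retraction $H$ constructed above has exactly this "faces can only grow" property by design (points move from smaller faces into the faces $F_1, F_2$ containing them in their closure), so the lift $\widetilde H$ exists, is $G_d^n\times G_d^n$-equivariant, is stationary on $X_d(M)$, and has image in $X_d(M)$. I expect the main obstacle to be the simultaneous requirement that the downstairs homotopy (i) never collides the two points, (ii) is the identity on all of $Y$, and (iii) has the face-monotonicity needed for lifting; reconciling (ii) and the continuity of the separating-functional choice near the boundary of $Y$ inside $P\times P\setminus\Delta_P$ is the delicate point, and will likely require choosing the homotopy via the combinatorial structure of the face poset (an inductive collapse over the skeleton of $P\times P\setminus\Delta_P$) rather than a naive metric projection.
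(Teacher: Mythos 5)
Your overall architecture is the same as the paper's: pass to the orbit space, retract $F(P,2)=P\times P\setminus\Delta$ onto $\mathcal{A}(P,2)=\bigcup_{F_1\cap F_2=\emptyset}F_1\times F_2$, and lift through the reconstruction $M=P\times G_d^n/\sim_{\lambda_d}$, so that $F_{G_d^n}(M,2)=F(P,2)\times G_d^{2n}/\sim_{\lambda_d\times\lambda_d}$. The genuine gap is in the lifting criterion, which you state backwards. For $[g,a]\mapsto[g,H_t(a)]$ to descend to the quotient one needs: whenever $g^{-1}g'\in G_{F(a)}$ (the subgroup attached to the carrier face of $a$), also $g^{-1}g'\in G_{F(H_t(a))}$; that is, $G_{F(a)}\subseteq G_{F(H_t(a))}$, so the carrier face may only \emph{shrink} and the isotropy may only \emph{grow} along the homotopy. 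You assert the opposite (``$H_t(p)$ lies in a face $F'$ with $F\subseteq F'$ \dots the isotropy only shrinks \dots well-defined''). With shrinking isotropy the lift is not well defined: for the small cover $S^1\to[0,1]$, a downstairs homotopy pushing the vertex $0$ into the interior sends the single point $[0,e]=[0,\sigma]$ to the two distinct points $[t/2,e]\neq[t/2,\sigma]$. Your own downstairs construction (flowing toward minimizing/maximizing faces, i.e.\ into the boundary) actually has the correct monotonicity, so the error is fixable, but as written the justification of the key equivariance step rests on a false principle. The paper's Claim A is exactly the correct statement: along each retraction path there is a \emph{decreasing} chain of faces $Q_1\times Q_1'\supset\cdots\supset Q_l\times Q_l'$, hence an increasing chain of subgroups $G_{Q_1}\times G_{Q_1'}<\cdots<G_{Q_l}\times G_{Q_l'}$, so identified points stay identified.

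A second, smaller issue is that the downstairs retraction is only sketched. The ``separating functional determined by $[p_1,p_2]$'' scheme has the continuity defect you yourself flag (the minimizing/maximizing faces jump as the direction crosses walls of the normal fan) and is not the identity on $\mathcal{A}(P,2)$; you anticipate that a face-poset-indexed inductive collapse is needed, and that is precisely what the paper does in its Lemma on $F(P,2)$: an iterated collapse through $\partial(P\times P)$, then $\partial P\times\partial P$, then successively smaller products of faces, stopping exactly at the pairs with $F_1\cap F_2=\emptyset$. That construction both fixes $\mathcal{A}(P,2)$ pointwise and produces, for each point, the decreasing chain of carrier faces that makes the equivariant lift well defined — the two requirements are met by one and the same combinatorial collapse, which is the content your proposal is missing.
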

The equivariant strong deformation retract $X_d(M)$ in Theorem~\ref{theorem1.4} plays an important role on further
studying  the algebraic topology of $F_{G_d^n}(M,2)$. 
The dual cell decomposition of the union $\bigcup_{F_1, F_2\in
\mathcal{F}(P)\ \atop F_1\cap F_2=\emptyset} (F_1\times F_2)$ as a
polyhedron is a simplicial complex $K_P$, which indicates  the
intersection property of all submanifolds $(\pi_d^{-1})^{\times
2}(F_1\times F_2), F_1, F_2\in \mathcal{F}(P)$ with $F_1\cap
F_2=\emptyset$.
     As shown in section~\ref{MV},  $X_d(M)$ with $K_P$ together  determines  a Mayer-Vietoris spectral sequence
$E^1_{p,q}(K_P, d), ..., E^\infty_{p,q}(K_P, d)$ with ${\Bbb Z}$ coefficients, which converges to $H_*(X_d(M))\cong H_*(F_{G_d^n}(M,2))$. Namely
$$H_i(F_{G_d^n}(M,2))\cong \sum_{p+q=i}E_{p,q}^\infty(K_P,d)$$
 (see Theorem~\ref{mv-c}). We shall prove that
\begin{thm}\label{main-betti} Assume that $\pi_d:M\longrightarrow P$
is a $dn$-dimensional $G_d^n$-manifold over a simple convex polytope $P$. Then  there is the following isomorphism
$$E_{p,q}^2(K_P, 1)\otimes{\Bbb Z}_2\cong E_{p,2q}^2(K_P, 2)\otimes{\Bbb Z}_2.$$
\end{thm}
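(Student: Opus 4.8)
The plan is to compare the two Mayer–Vietoris spectral sequences by comparing their $E^1$-pages cover-piece by cover-piece, and the essential point is that each building block $(\pi_1^{-1})^{\times 2}(F_1\times F_2)$ for a small cover relates to its quasi-toric analogue $(\pi_2^{-1})^{\times 2}(F_1\times F_2)$ exactly as a real torus relates to a complex torus after tensoring with $\Z_2$. First I would recall that both spectral sequences are built from the \emph{same} simplicial complex $K_P$: the index set of the cover, the intersection poset, and hence the shape of the $E^1$-page are identical for $d=1$ and $d=2$; only the coefficient groups attached to the simplices differ, through the homology of the pieces $(\pi_d^{-1})^{\times 2}(F_1\times F_2)$ and their multiple intersections. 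So the content of the theorem is a statement purely about these pieces and their intersections.

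Next I would identify the pieces explicitly. For a face $F$ of codimension $\ell$ in $P$, the preimage $\pi_d^{-1}(F)$ is itself a $d(n-\ell)$-dimensional $G_d^{n-\ell}$-manifold over the simple polytope $F$ (this is standard for small covers and quasi-toric manifolds, and is exactly the kind of face-preimage structure used implicitly in Theorem~\ref{theorem1.4}). A finite intersection of pieces $(\pi_d^{-1})^{\times 2}(F_1\times F_2)$ over a simplex of $K_P$ is again a product of two such face-preimages (the intersection of the $F_1$'s and the intersection of the $F_2$'s), so every term feeding the $E^1$-page is a tensor product $H_*(\pi_d^{-1}(F)) \otimes H_*(\pi_d^{-1}(F'))$ by Künneth. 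The key homological input is then: for a small cover $N$ over a simple polytope $Q$ one has $H_*(N;\Z_2)$ with Poincaré polynomial ${\bf h}_Q(t)$, while for a quasi-toric manifold $N'$ over the same $Q$ one has $H_*(N';\Z)$ free with Poincaré polynomial ${\bf h}_Q(t^2)$; hence $H_q(N;\Z_2) \cong H_{2q}(N';\Z)\otimes\Z_2$, degree by degree. I would then push this through Künneth (over $\Z_2$ the Künneth formula has no Tor term, and the quasi-toric factors are $\Z$-free so their mod-2 reductions also satisfy a clean Künneth): each $E^1_{p,q}(K_P,1)\otimes\Z_2$ is a direct sum, over the $p$-simplices of $K_P$, of groups $H_a(\pi_1^{-1}(F))\otimes H_b(\pi_1^{-1}(F'))\otimes\Z_2$ with $a+b=q$, and this matches termwise the corresponding sum for $E^1_{p,2q}(K_P,2)\otimes\Z_2$ with $a+b$ replaced by degrees summing to $2q$ in the doubled grading.

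The remaining step is to check that the $d^1$ differentials are intertwined by this identification, so that the isomorphism descends to $E^2$. Here I would argue that $d^1$ is the simplicial (Čech-type) differential of $K_P$ with coefficients in the local system of pieces: its components are alternating sums of the maps on homology induced by the inclusions $(\pi_d^{-1})^{\times 2}(F_1'\times F_2') \hookrightarrow (\pi_d^{-1})^{\times 2}(F_1\times F_2)$ for face inclusions $F_i\subseteq F_i'$. These inclusion-induced maps are, for small covers, the standard maps $H_*(\pi_1^{-1}(F);\Z_2)\to H_*(\pi_1^{-1}(F');\Z_2)$ coming from the face structure, and their quasi-toric counterparts act the same way on the $h$-vector bookkeeping (both are governed by the combinatorics of $Q$, not by $d$), so they are identified under the degree-doubling isomorphism after $\otimes\,\Z_2$. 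Since the simplicial part of $d^1$ is literally the same for both $d$, naturality of Künneth and of the $h$-vector description gives a chain isomorphism of $E^1$-complexes, hence $E^2_{p,q}(K_P,1)\otimes\Z_2 \cong E^2_{p,2q}(K_P,2)\otimes\Z_2$.

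I expect the main obstacle to be the second step: making rigorous and uniform the claim that ``the inclusion-induced maps on the homology of face preimages are the same for small covers and quasi-toric manifolds up to degree doubling after $\otimes\Z_2$.'' The degree-doubling statement for a single manifold follows from the $h$-vector formulas, but one needs it functorially for all face inclusions simultaneously and compatibly with the Künneth splittings — i.e.\ one needs a natural (in the face poset) degree-doubling isomorphism of the relevant cellular chain complexes mod $2$, not merely an abstract isomorphism of homology groups. Establishing this naturality, presumably via the explicit CW structures on small covers and quasi-toric manifolds coming from the polytope (the Morse-theoretic or ``pullback of the cone'' cell structures, whose cells are indexed identically and whose boundary maps differ only by the degree shift mod $2$), is where the real work lies; once that naturality is in hand the spectral-sequence comparison is formal.
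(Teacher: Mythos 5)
Your overall strategy is the same as the paper's: identify $E^1_{p,q}(K_P,1)\otimes\mathbb{Z}_2$ with $E^1_{p,2q}(K_P,2)\otimes\mathbb{Z}_2$ simplex by simplex of $K_P$, using the fact that over a common face of $P\times P$ the small-cover piece $X^1_a$ and the quasi-toric piece $X^2_a$ have mod~$2$ homology related by degree doubling (the $h$-vector of the face), then check compatibility with $\text{\bf d}_1$ and pass to $E^2$. (Whether one views $X^d_a$ via K\"unneth as a product of two face preimages, as you do, or directly as a $G^*_d$-manifold over a face of $P\times P$, as the paper does, is immaterial.) The genuine gap is exactly the step you flag and then leave open: you assert that the inclusion-induced maps $\ell_1\colon H_*(X^1_a;\mathbb{Z}_2)\to H_*(X^1_b;\mathbb{Z}_2)$ and $\ell_2\colon H_{2*}(X^2_a;\mathbb{Z}_2)\to H_{2*}(X^2_b;\mathbb{Z}_2)$ are "governed by the combinatorics, not by $d$," but equality of $h$-vectors only determines the groups, not the maps. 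An isomorphism $H_q(X^1_a;\mathbb{Z}_2)\cong H_{2q}(X^2_a;\mathbb{Z}_2)$ chosen separately for each simplex $a\in K_P$ need not commute with the face maps, and $E^2$ depends on the maps; a dimension count alone would at best compare Euler characteristics of the rows, not the $E^2$ terms. So as written the proposal does not prove the theorem.

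The paper closes precisely this hole with Lemma~\ref{na-hom} and its use before Theorem~\ref{betti}: it fixes a height function $\Phi$ on $P\times P$ whose restrictions to the relevant faces preserve the indices of vertices, so that the Davis--Januszkiewicz perfect (Morse-theoretic) cell decompositions make $X^d_a$ an honest CW subcomplex of $X^d_b$, with every perfect cell of $X^d_a$ a perfect cell of $X^d_b$. Consequently the inclusion sends the canonical generator $\gamma_d(v,F_a)$ to $\gamma_d(v,F_b)$, for both $d=1$ and $d=2$, and the map $\zeta$ defined by $\gamma_1(v,\cdot)\mapsto\gamma_2(v,\cdot)$ commutes with $\ell_d$, hence with $\text{\bf d}_1$; it is therefore a chain isomorphism of $E^1$-complexes and induces $E^2_{p,q}(K_P,1)\otimes\mathbb{Z}_2\cong E^2_{p,2q}(K_P,2)\otimes\mathbb{Z}_2$. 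This cell-level, face-poset-natural identification is exactly the "real work" you anticipated (and correctly guessed would come from the polytopal Morse cell structures), but since you did not construct it, your argument stops one essential lemma short of the theorem.
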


For the case $d=2$, we also have

 \begin{thm}\label{eq-coh} Let
$\pi_2:M\longrightarrow P$ be a $2n$-dimensional quasi-toric
manifold over a simple convex polytope $P$. Then the associated
Mayer-Vietoris spectral sequence of the space $X_2(M)$ collapses at
the $E^2$ term, that is,
$$H_i(F_{T^n}(M,2))\cong \sum_{p+q=i}E_{p,q}^2(K_P,2).$$
\end{thm}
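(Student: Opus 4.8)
The plan is to show that the Mayer--Vietoris spectral sequence of $X_2(M)$ has no higher differentials by reducing everything to statements about torus manifolds whose cohomology is concentrated in even degrees. First I would recall from Theorem~\ref{theorem1.4} that $X_2(M)$ is the union of the pieces $(\pi_2^{-1})^{\times 2}(F_1\times F_2)$ over disjoint faces $F_1,F_2\in\mathcal F(P)$, and that the nerve of this cover is the simplicial complex $K_P$. The key structural point is that each piece, and more generally each nonempty intersection of pieces, is up to homeomorphism a product $N_1\times N_2$, where each $N_i=\pi_2^{-1}(F_i)$ is itself a quasi-toric manifold over the face $F_i$ (faces of a simple polytope are again simple polytopes, and the restriction of a quasi-toric manifold over a face is again quasi-toric, by the standard face-submanifold construction of Davis--Januszkiewicz). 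An intersection of several pieces corresponds to intersecting the index faces, so it is again of the form $\pi_2^{-1}(G_1)\times\pi_2^{-1}(G_2)$ for disjoint faces $G_1\supseteq$ (or $\subseteq$) the originals — in particular it is always a product of two quasi-toric manifolds, hence has cohomology concentrated in even total degree with no torsion (this is the Davis--Januszkiewicz computation: $H^*$ of a quasi-toric manifold is a free $\mathbb Z$-module concentrated in even degrees, with Poincar\'e polynomial ${\bf h}_P(t^2)$), and by K\"unneth the same holds for the product.

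Next I would examine the $E^1$ page of the Mayer--Vietoris spectral sequence. With $\mathbb Z$ coefficients, $E^1_{p,q}(K_P,2)$ is built out of $\bigoplus H_q$ of the $(p+1)$-fold intersections, i.e. a chain group of $K_P$ with coefficients in the functor $\sigma\mapsto H_q(\text{intersection over }\sigma)$. By the previous paragraph every such $H_q$ vanishes unless $q$ is even, and is free abelian when $q$ is even. Consequently the whole $E^1$ page — and hence $E^2$ — is concentrated in rows $q\in 2\mathbb Z$ and is free abelian. The bidegree of the $r$-th differential in this (homological) spectral sequence is $(-r,\,r-1)$, so $d^r$ shifts the $q$-grading by $r-1$, which is odd whenever $r$ is even. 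Therefore $d^r=0$ for all even $r\geq 2$ automatically, since it would have to map an even row to an odd (zero) row.

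The remaining, and genuinely substantive, point is to kill the odd-index differentials $d^r$ with $r$ odd, $r\geq 3$, which do preserve the evenness of $q$ and so are not ruled out by parity. Here I would use the torus action: the whole construction is $T^n$-equivariant (Theorem~\ref{theorem1.4} gives an \emph{equivariant} deformation retract), so there is a diagonal $T^n$-action on $X_2(M)$ preserving the cover, and the spectral sequence refines to one of $H^*_{T^n}(BT^n)$-modules; alternatively, and more self-containedly, one argues that the $E^2$ page, being free abelian and concentrated in even rows, has Euler characteristic in each column already accounting for all of $H_*(F_{T^n}(M,2))$ once we invoke Theorem~\ref{main thm} with $d=2$, which pins down $\chi(F_{T^n}(M,2))=\chi(F(M,2))$. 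I expect the cleanest route to be a dimension count: show that $\sum_{p,q}(-1)^{p+q}\operatorname{rank}E^2_{p,q}=\chi(F_{T^n}(M,2))$ and, separately, that $\sum_{p+q=i}\operatorname{rank}E^2_{p,q}$ is forced to equal the corresponding Betti number by comparing with the $d=1$ case via Theorem~\ref{main-betti} (which identifies $E^2_{p,q}(K_P,2)\otimes\mathbb Z_2$ with $E^2_{p,2q}(K_P,1)\otimes\mathbb Z_2$) together with a vanishing-of-odd-differentials argument on the $d=1$ side, or by observing that any nonzero $d^r$ with $r$ odd would create odd-degree torsion in $H_*(X_2(M))$ incompatible with the free-even structure propagated from $E^2$. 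The main obstacle is precisely this last step — ruling out the parity-respecting odd differentials — and I would attack it by showing the $E^2$ page is already a free $H^*(BT^n)$-comodule (or module) of the expected total rank, so that any further differential would drop the rank below what Theorem~\ref{main thm} demands, forcing $E^2=E^\infty$.
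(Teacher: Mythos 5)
Your reduction of the pieces is correct and matches the paper's starting point: every nonempty intersection $X^2_a$ is of the form $(\pi_2^{-1})^{\times 2}(G_1\times G_2)$ for faces $G_1,G_2$ of $P$, hence a product of quasi-toric manifolds, so by Davis--Januszkiewicz its homology is free and concentrated in even degrees, and by (\ref{equation}) the page $E^1_{p,q}(K_P,2)$ vanishes for odd $q$; you are also right that this parity observation only kills $d_r$ for even $r$. The genuine gap is that you never dispose of the parity-preserving differentials $d_r$ with $r\geq 3$ odd, and none of the fallback strategies you sketch can do it. An Euler-characteristic count is powerless: differentials preserve the alternating sum of ranks, so Theorem~\ref{main thm} (which computes only $\chi(F_{T^n}(M,2))$, not Betti numbers or a ``total rank'') is consistent with any pattern of nonzero differentials. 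Theorem~\ref{main-betti} is an isomorphism of $E^2$-pages with $\mathbb{Z}_2$ coefficients only; it says nothing about differentials on either side, and collapse of the $d=1$ spectral sequence is not available in general (the paper proves it only in special cases such as Proposition~\ref{sm-sq}). The suggestion that a nonzero odd differential would create odd-degree classes ``incompatible with the free-even structure'' is unfounded: with $q$ even the total degree $p+q$ still takes both parities, the abutment $H_*(F_{T^n}(M,2))$ is genuinely not concentrated in even degrees (e.g.\ $b_1=1$ in Proposition~\ref{b1}, and the odd Betti numbers in Proposition~\ref{b2}), and $E^2$, being the homology of a complex of free abelian groups, need not even be torsion-free. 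The appeal to an $H^*(BT^n)$-module structure is likewise not developed and would not by itself force $d_3=0$.

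What supplies the missing step in the paper is a chain-level, not homology-level, use of the even cell structure. In Section 5 the paper fixes a single height function on $P\times P$ whose restrictions give compatible perfect cell decompositions of all the pieces, so each $X^2_a$ is a CW subcomplex of $M^{\times 2}$ having cells only in even dimensions. Then in the double complex $D^{K_P}_{p,q}$ of Section~\ref{MV} the cellular chain groups $D_q(X^2_a)$ vanish for odd $q$ and the vertical differential $\partial_1$ is identically zero, so the total differential reduces to the \v{C}ech differential $\partial_2$; in the zig-zag description of the higher differentials all correcting terms may be taken to be zero, whence $d_r=0$ for every $r\geq 2$, including the odd $r$ that parity alone cannot reach. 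This is the content behind the paper's (tersely stated) deduction that the vanishing of $E^1_{p,q}(K_P,2)$ for odd $q$ forces collapse at $E^2$, and it is precisely the ingredient your argument, which retains only the statement that the $X^2_a$ have even homology, is missing.
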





Furthermore,  as a consequence of Theorems~\ref{theorem1.4}--\ref{eq-coh},
we can also  determine:
 \begin{enumerate}
\item  the integral homology  of $F_{G_d^2}(M,2)$ and
\item the (mod~$2$) homology  of $F_{G_d^n}(M,2)$ for  $M$ to be a $G_d^n$-manifold over an $n$-simplex  $\Delta^n$.
In this case, $M$ is one of ${\Bbb R}P^n, {\Bbb C}P^n$ or
$\overline{{\Bbb C}P}^n$ (see \cite[Page 426]{bp}).
\end{enumerate}

The article is organized as follows. In section~\ref{defn}, we give a brief review on the notions of small covers,
quasi-toric manifolds and (real) moment-angle manifolds and investigate basic constructions and properties of their
orbit configuration spaces. Then we calculate the Euler characteristic of the orbit configuration spaces for small covers and quasi-toric manifolds in section~\ref{euler}, where Theorem~\ref{main thm} is Theorem~\ref{main-1 thm} and the proof of Theorem~\ref{non-eq} is given in subsection~\ref{subsection3.4}. In section~\ref{homotopy type}, we study the homotopy type of $F_{G_d^1}(M,k)$ and $F_{G_d^n}(M,2)$ with giving the proofs of Theorems~\ref{theorem1.3} and~\ref{theorem1.4}. As an application of Theorem~\ref{theorem1.4}, we 
prove Theorems~\ref{main-betti} and~\ref{eq-coh} in section~\ref{b-eq}.  In section~\ref{ca}, we compute the integral homology of $F_{G_d^2}(M,2)$ and the (mod~$2$) homology of $F_{G_d^n}(M,2)$ for the $G_d^n$-manifold $M$ over an $n$-simplex. The Mayer--Vietoris spectral sequence will be one of major tools for our computations and so we give a review on the Mayer-Vietoris spectral sequence in section~\ref{MV} as an appendix.

\section{$G_d^n$-manifolds and (real) moment-angle manifolds over simple convex polytopes and their orbit configuration spaces}\label{defn}

\subsection{$G_d^n$-manifolds and (real) moment-angle manifolds over simple convex polytopes}
Following \cite{dj}, let $P$ be a simple convex $n$-polytope, and let $G_d^n$ be the 2-torus ${\Bbb Z}_2^n$ of rank $n$ if $d=1$, and the torus $T^n$ of rank $n$ if $d=2$.
A $dn$-dimensional {\em $G_d^n$-manifold over $P$},  $\pi_d: M\longrightarrow P$,  is a smooth
closed $dn$-dimensional manifold $M$ with a locally standard $G_d^n$-action such that the orbit space is $P$.
 A $G_d^n$-manifold $\pi_d: M\longrightarrow P$ is called a {\em small cover} if $d=1$ and a {\em quasi-toric manifold} if $d=2$. We know from \cite{dj} that each $G_d^n$-manifold $\pi: M \longrightarrow P$ determines a characteristic
function $\lambda_d$  on $P$, defined by mapping all facets (i.e.,
$(n-1)$-dimensional faces) of $P$ to nonzero elements of $R_d^n$ such that $n$ facets meeting
at each vertex are mapped to  a basis of $R_d^n$ where $R_d=\begin{cases}
{\Bbb Z}_2 & \text{ if } d=1\\
{\Bbb Z} & \text{ if } d=2.
\end{cases}$ Conversely, the pair $(P, \lambda_d)$ can be reconstructed to the $M$ as follows:
first $\lambda_d$
gives the following  equivalence relation $\sim_{\lambda_d}$ on $P\times G_d^n$
\begin{equation}\label{equiv}
(x, g)\sim_{\lambda_d} (y, h)\Longleftrightarrow \begin{cases} x=y, g=h & \text{ if } x\in \text{\rm int}(P)\\
x=y, g^{-1}h\in G_F &\text{ if } x\in \text{\rm int}F\subset \partial P
\end{cases}\end{equation}
then the quotient space $P\times G_d^n/\sim_{\lambda_d}$ is equivariantly homeomorphic to the $M$, where $G_F$ is explained as follows:  for each point $x\in \partial P$, there exists a unique face $F$ of $P$ such that $x$ is in its relative interior. If $\dim F=l$, then there are $n-l$ facets, say
$F_{i_1}, ..., F_{i_{n-l}}$, such that $F=F_{i_1}\cap\cdots \cap
F_{i_{n-l}}$, and furthermore, $\lambda_d(F_{i_1}), ..., \lambda_d(F_{i_{n-l}})$
determine a subgroup of rank $n-l$ in $G_d^n$, denoted by
$G_F$.
This reconstruction of $M$ tells us that any topological invariant of $\pi_d: M\longrightarrow P$ can be determined by $(P, \lambda_d)$.  Davis and Januszkiewicz showed that $\pi_d: M\longrightarrow P$ has
a very beautiful algebraic topology in terms of $(P, \lambda_d)$. For example, the equivariant cohomology with $R_d$ coefficients of
 $\pi_d: M\longrightarrow P$ is isomorphic to the Stanley--Reisner face
ring of $P$, and the mod 2 Betti numbers $(b_0^{{\Bbb Z}_2}, b_1^{{\Bbb Z}_2}, ..., b_n^{{\Bbb Z}_2})$ of $M$ for $d=1$ and the Betti numbers $(b_0, b_2, ..., b_{2n})$ of $M$ for $d=2$ agree with the
$h$-vector $(h_0, h_1, ..., h_n)$ of $P$.

In addition, associated with a simple convex $n$-polytope $P$ with $m$ facets $F_1, ..., F_m$, Davis and Januszkiewicz also introduced a  $G_d^m$-manifold $\mathcal{Z}_{P, d}$ of dimension $(d-1)m+n$ over $P$ as follows: first define a map $\theta_d: \{F_1, ..., F_m\}\longrightarrow R_d^m$ by mapping $F_i\longmapsto e_i$ where $\{e_1, ..., e_m\}$ is the standard basis of $R_d^m$, and then use $\theta_d$ to give an equivalence relation $\sim_{\theta_d}$ on $P\times G_d^m$ as in (\ref{equiv}), so that the required  $G_d^m$-manifold $\mathcal{Z}_{P, d}$ is just the quotient $P\times G_d^m/\sim_{\theta_d}$ with a natural $G_d^m$-action having orbit space as $P$. Later on, Buchstaber and Panov \cite{bp} further studied the topology of $\mathcal{Z}_{P, d}$ as a submanifold in the polydisk $(D^d)^{\times m}$, and named it a real moment-angle manifold for $d=1$ and a moment-angle manifold for $d=2$. Note that a real moment-angle manifold and a moment-angle manifold are often denoted by ${\Bbb R}\mathcal{Z}_P$ and $\mathcal{Z}_P$, respectively.

As pointed out in \cite[Nonexample 1.22]{dj}, given a simple convex $n$-polytope $P$ with $n>3$,  there may not exist any $G_d^n$-manifold over $P$. However, there always exists a (real) moment-angle manifold over $P$.

When $P$ admits a characteristic function $\lambda_d$ (so there is a $G_d^n$-manifold $M^{dn}$ over $P$ reconstructed by $(P, \lambda_d)$), regarding $R_d^m$ as a free module generated by $\{F_1, ..., F_m\}$, the map $\lambda_d$ may linearly  extend to
a surjection $\widetilde{\lambda_d}: R_d^m\longrightarrow R_d^n$. Then the kernel of $\widetilde{\lambda_d}$ determines a subgroup $H$ of rank $m-n$ of $G_d^m$, which can freely act on $\mathcal{Z}_{P, d}$ such that the quotient manifold $\mathcal{Z}_{P, d}/H$ is exactly equivariantly homeomorphic to the $G_d^n$-manifold $M^{dn}$. Thus, the natural projection $\rho_d: \mathcal{Z}_{P, d}\longrightarrow M^{dn}$ is a fibration with fiber $G_d^{m-n}$. Davis and Januszkiewicz
showed in \cite{dj} that the Borel constructions $EG_d^m\times_{G_d^m}\mathcal{Z}_{P, d}$ and $EG_d^n\times_{G_d^n} M^{dn}$ are homotopy-equivalent.

For more details of these equivariant manifolds above with many
interesting developments and applications, e.g., see \cite{dj, bp,
bbcg, cl, cms, cps, ifm, lt, ly, m, ms, u}.

\subsection{Basic constructions and properties of orbit configuration spaces of
$G_d^n$-manifolds and (real) moment-angle manifolds}
Let $\pi_d: M\longrightarrow P$ be a $dn$-dimensional $G_d^n$-manifold over a simple convex polytope $P$.
Then the product $\pi_d^{\times k}: M^{\times k}\longrightarrow P^{\times k}$ is also
a $dkn$-dimensional $(G_d^{n})^{\times k}$-manifold over a simple convex polytope $P^{\times k}$.

\begin{defn} Set
$$\widetilde{\Delta}(P^{\times k})=\bigcup_{1\leq i<j\leq
k}\Delta_{i,j}(P^{\times k})$$ where $\Delta_{i, j}(P^{\times
k})=\{(p_1, p_2, ..., p_k)\in P^{\times k}| p_i=p_j\}$.
 We call $\widetilde{\Delta}(P^{\times
k})$  the {\em weak diagonal} of $P^{\times k}$. Set
$$\Delta(P^{\times k})=\{(p,..., p)\in P^{\times k}| p\in P\}, $$
which is called the {\em strong diagonal} of $P^{\times k}$.
\end{defn}

 By definition, we have that $F(P, k)= P^{\times k}-\widetilde{\Delta}(P^{\times k})$. By the constructions of $M$, we see that $F_{G_d^n}(M, k)$  is the pullback from $\pi_d^{\times k}: M^{\times k}\longrightarrow P^{\times k}$  via the inclusion
$F(P,k)\hookrightarrow P^{\times k}$. So there is the following commutative diagram:
$$\begin{CD}
    F_{G_d^n}(M, k) @>>> M^{\times k}\\
  @V{\overline{\pi}_d^{\times k}}VV @VV{\pi_d^{\times k}}V\\
  F(P,k) @>>> P^{\times k}.
\end{CD}$$
We see easily that $F_{G_d^n}(M, k)\subset M^{\times k}$ is a
non-free orbit configuration space, and admits an  action of
$(G_d^n)^{\times k}$ such that the orbit space is exactly $F(P, k)$.

\begin{prop}\label{constr}
Let $\pi_d: M\longrightarrow P$ be a $dn$-dimensional $G_d^n$-manifold over a simple convex polytope $P$. Then
$$F_{G_d^n}(M, k)=\bigcap_{1\leq i<j\leq k}(M^{\times k}-(\pi_d^{\times k})^{-1}(\Delta_{i, j}(P^{\times k}))).$$
\end{prop}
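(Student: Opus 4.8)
The plan is to reduce the proposition to an elementary set-theoretic identity, using only that $\pi_d$ is the orbit map of the $G_d^n$-action. The key preliminary observation is that, since $\pi_d\colon M\to P$ realizes $P$ as the orbit space $M/G_d^n$, the fibers of $\pi_d$ are exactly the $G_d^n$-orbits. Consequently, for $x,y\in M$ we have $\pi_d(x)=\pi_d(y)$ if and only if $G_d^n(x)=G_d^n(y)$; and because any two orbits of a group action are either equal or disjoint, this is equivalent to $G_d^n(x)\cap G_d^n(y)\neq\emptyset$. In other words, $G_d^n(x)\cap G_d^n(y)=\emptyset$ precisely when $\pi_d(x)\neq\pi_d(y)$.

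With this in hand I would unwind both sides. On the right, a point $(x_1,\dots,x_k)\in M^{\times k}$ lies in $(\pi_d^{\times k})^{-1}(\Delta_{i,j}(P^{\times k}))$ exactly when $\pi_d(x_i)=\pi_d(x_j)$, hence it lies in $M^{\times k}-(\pi_d^{\times k})^{-1}(\Delta_{i,j}(P^{\times k}))$ exactly when $\pi_d(x_i)\neq\pi_d(x_j)$; therefore it lies in the intersection over all $1\le i<j\le k$ precisely when $\pi_d(x_i)\neq\pi_d(x_j)$ for every pair $i\neq j$. On the left, the defining condition ``$G_d^n(x_i)\cap G_d^n(x_j)=\emptyset$ for all $i\neq j$'' is, by the preliminary observation, literally the same condition ``$\pi_d(x_i)\neq\pi_d(x_j)$ for all $i\neq j$''. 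Since the two subsets of $M^{\times k}$ coincide and both carry the subspace topology, the spaces are equal.

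A slicker way to package the same argument, which I would probably use in the write-up, is to invoke the two facts already recorded just before the statement: that $F(P,k)=P^{\times k}-\widetilde{\Delta}(P^{\times k})=\bigcap_{1\le i<j\le k}\bigl(P^{\times k}-\Delta_{i,j}(P^{\times k})\bigr)$, and that $F_{G_d^n}(M,k)$ is the pullback of $\pi_d^{\times k}$ along $F(P,k)\hookrightarrow P^{\times k}$, i.e. $F_{G_d^n}(M,k)=(\pi_d^{\times k})^{-1}(F(P,k))$. One then only needs that preimages commute with intersections and complements, which gives $(\pi_d^{\times k})^{-1}\bigl(\bigcap_{i<j}(P^{\times k}-\Delta_{i,j}(P^{\times k}))\bigr)=\bigcap_{i<j}\bigl(M^{\times k}-(\pi_d^{\times k})^{-1}(\Delta_{i,j}(P^{\times k}))\bigr)$, which is exactly the asserted formula. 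There is no genuine obstacle here; the only point deserving an explicit word is the identification of the fibers of $\pi_d$ with the $G_d^n$-orbits, and this is immediate from $P=M/G_d^n$. Everything else is purely formal.
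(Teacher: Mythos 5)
Your proposal is correct and follows essentially the same route as the paper: the paper records just before the statement that $F_{G_d^n}(M,k)$ is the pullback of $\pi_d^{\times k}$ along $F(P,k)=P^{\times k}-\widetilde{\Delta}(P^{\times k})\hookrightarrow P^{\times k}$ and then proves the proposition in one line by the De Morgan formula, which is exactly your second, ``slicker'' packaging. Your explicit identification of the fibers of $\pi_d$ with the $G_d^n$-orbits simply spells out why that pullback description holds, so nothing is missing.
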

\begin{proof}
The required result follows  by using the De Morgan formula.
\end{proof}

\begin{rem}
In Proposition~\ref{constr}, each $M^{\times k}-(\pi_d^{\times k})^{-1}(\Delta_{i, j}(P^{\times k}))$ is homeomorphic to $M^{\times (k-2)}\times
F_{G_d^n}(M, 2)$.
\end{rem}

\begin{rem}
Let $M\longrightarrow P$ be a quasi-toric manifold over $P$. As shown in \cite[Corollary 1.9]{dj}, there is a conjugation involution $\tau$ on $M$ such that its fixed point set $M^\tau$ is exactly a small cover over $P$. This means that there is still an involution on $F_{T^n}(M,k)$ such that its fixed point set is
$F_{{\Bbb Z}_2^n}(M^\tau,k)$.
\end{rem}

Let $p_d: \mathcal{Z}_{P, d}\longrightarrow P$ be the (real) moment-angle manifold over $P$ with $m$ facets.  Similarly, we see from the constructions of  $\mathcal{Z}_{P, d}$ that  $F_{G_d^m}(\mathcal{Z}_{P, d}, k)$ admits an action of $(G_d^m)^{\times k}$ and is the pullback from  $p_d^{\times k}: \mathcal{Z}_{P, d}^{\times k}\longrightarrow P^{\times k}$ via the inclusion
$F(P,n)\hookrightarrow P^{\times n}$, so there is a commutative diagram
$$\begin{CD}
  F_{G_d^m}(\mathcal{Z}_{P, d}, k) @>>> \mathcal{Z}_{P, d}^{\times k}\\
  @V{\overline{p}_d^{\times k}}VV  @VV{p_d^{\times k}}V\\
   F(P,k) @>>> P^{\times k}.
\end{CD}$$
Thus we have that
$$F_{G_d^m}(\mathcal{Z}_{P, d}, k)=\bigcap_{1\leq i<j\leq k}(\mathcal{Z}_{P, d}^{\times k}-(p_d^{\times k})^{-1}(\Delta_{i, j}(P^{\times k}))).$$
If we assume that there exists a $G_d^n$-manifold $\pi_d: M\longrightarrow P$ over $P$, then we know that $\mathcal{Z}_{P, d}$ is a
  principal $G_d^{m-n}$-bundle over $M$, denoted by $\rho_d: \mathcal{Z}_{P, d}\longrightarrow M$. Then we have that $p_d=\pi_d\circ \rho_d$. Furthermore, we have  the following commutative diagram:
$$\begin{CD}
  F_{G_d^m}(\mathcal{Z}_{P, d}, k) @>>> \mathcal{Z}_{P, d}^{\times k}\\
  @V{\overline{\rho}_d^{\times k}}VV  @VV{\rho_d^{\times k}}V\\
  F_{G_d^n}(M, k) @>>> M^{\times k}\\
  @V{\overline{\pi}_d^{\times k}}VV @VV{\pi_d^{\times k}}V\\
  F(P,k) @>>> P^{\times k}.
\end{CD}$$
Since  $\overline{\rho}_d^{\times k}: F_{G_d^m}(\mathcal{Z}_{P, d},
k)\longrightarrow F_{G_d^n}(M, k)$ is  a pull-back via  the
inclusion $F_{G_d^n}(M, k) \hookrightarrow M^{\times k}$, it is a
fibration with fiber $(G_d^{m-n})^{\times k}$. In the same way as in
\cite[4.1]{dj} and \cite[Proposition 6.34]{bp}, we have the
following homotopy-equivalent Borel constructions
$$E(G_d^m)^{\times k}\times_{(G_d^m)^{\times k}}F_{G_d^m}(\mathcal{Z}_{P, d}, k)\simeq E(G_d^n)^{\times k}\times_{(G_d^n)^{\times k}} F_{G_d^n}(M, k).$$
Thus we conclude that

\begin{prop} Given a simple convex $n$-polytope $P$ with $m$ facets, assume that  $\pi_d: M\longrightarrow P$ is a  $G_d^n$-manifold over $P$. Let $p_d: \mathcal{Z}_{P, d}\longrightarrow P$ be the $($real$)$ moment-angle manifold over $P$. Then the equivariant cohomologies of $F_{G_d^n}(M, k)$ and $F_{G_d^m}(\mathcal{Z}_{P, d}, k)$ are isomorphic, i.e.,
$$H^*_{(G_d^n)^{\times k}}(F_{G_d^n}(M, k))\cong H^*_{(G_d^m)^{\times k}}(F_{G_d^m}(\mathcal{Z}_{P, d}, k)).$$
\end{prop}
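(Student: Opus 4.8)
The plan is to recognize that the proposition is simply a restatement, at the level of ordinary cohomology, of the homotopy equivalence of Borel constructions already displayed immediately above it. Recall that for a topological group $\Gamma$ and a $\Gamma$-space $X$ one has, by definition, $H^*_\Gamma(X)=H^*(E\Gamma\times_\Gamma X)$. Applying this with $\Gamma=(G_d^m)^{\times k}$ and $X=F_{G_d^m}(\mathcal{Z}_{P,d},k)$, and with $\Gamma=(G_d^n)^{\times k}$ and $X=F_{G_d^n}(M,k)$, the asserted isomorphism
$$H^*_{(G_d^n)^{\times k}}(F_{G_d^n}(M,k))\cong H^*_{(G_d^m)^{\times k}}(F_{G_d^m}(\mathcal{Z}_{P,d},k))$$
is precisely the map induced on singular cohomology by the homotopy equivalence $E(G_d^m)^{\times k}\times_{(G_d^m)^{\times k}}F_{G_d^m}(\mathcal{Z}_{P,d},k)\simeq E(G_d^n)^{\times k}\times_{(G_d^n)^{\times k}}F_{G_d^n}(M,k)$, combined with homotopy invariance of cohomology. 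So, granting that equivalence, the proof is one line.

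The actual content lies in the homotopy equivalence of Borel constructions, which I would record as follows. Put $H=\ker\widetilde{\lambda_d}\subset G_d^m$; it has rank $m-n$, acts freely on $\mathcal{Z}_{P,d}$, and $\widetilde{\lambda_d}$ induces an isomorphism $G_d^m/H\cong G_d^n$ identifying $\mathcal{Z}_{P,d}/H$ with the $G_d^n$-manifold $M$. Since $G_d^m$ is abelian, $H^{\times k}$ is a normal subgroup of $(G_d^m)^{\times k}$ with quotient $(G_d^m)^{\times k}/H^{\times k}\cong(G_d^n)^{\times k}$; it acts freely on $\mathcal{Z}_{P,d}^{\times k}$, and hence on the $(G_d^m)^{\times k}$-invariant subspace $F_{G_d^m}(\mathcal{Z}_{P,d},k)$, with quotient $F_{G_d^n}(M,k)$. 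This is exactly the already-noted fact that $\overline{\rho}_d^{\times k}\colon F_{G_d^m}(\mathcal{Z}_{P,d},k)\to F_{G_d^n}(M,k)$ is a principal $(G_d^{m-n})^{\times k}$-bundle, obtained by pulling back $\rho_d^{\times k}\colon\mathcal{Z}_{P,d}^{\times k}\to M^{\times k}$ along the inclusion $F_{G_d^n}(M,k)\hookrightarrow M^{\times k}$. One then invokes the standard principle: whenever a normal subgroup $H\triangleleft\Gamma$ acts freely on a $\Gamma$-space $Y$, there is a natural homotopy equivalence $E\Gamma\times_\Gamma Y\simeq E(\Gamma/H)\times_{\Gamma/H}(Y/H)$ — this is the argument of \cite[4.1]{dj} and \cite[Proposition 6.34]{bp} — and specializes it to $\Gamma=(G_d^m)^{\times k}$, $H=H^{\times k}$, $Y=F_{G_d^m}(\mathcal{Z}_{P,d},k)$ to obtain the displayed equivalence.

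I do not expect a genuine obstacle: the argument is purely formal once $\overline{\rho}_d^{\times k}$ is known to be a principal bundle, and that was already established in the excerpt. The only points deserving a remark are (i) that $F_{G_d^m}(\mathcal{Z}_{P,d},k)$ is $H^{\times k}$-invariant with free $H^{\times k}$-action — invariance is immediate because an orbit configuration space is invariant under the ambient group action by its very definition, and freeness is inherited from the free $H^{\times k}$-action on $\mathcal{Z}_{P,d}^{\times k}$ — and (ii) that the stage-wise quotient is natural enough to pass through the Borel construction functor, which is routine and entirely insensitive to the noncompactness and non-freeness of the orbit configuration spaces in play. Hence the only ``difficulty'' is the one already dispatched, namely identifying $\overline{\rho}_d^{\times k}$ as a fiber bundle with structure group $(G_d^{m-n})^{\times k}$.
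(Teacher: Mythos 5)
Your proposal is correct and follows the paper's own route: the paper likewise notes that $\overline{\rho}_d^{\times k}$ is a pull-back of $\rho_d^{\times k}$, hence a principal $(G_d^{m-n})^{\times k}$-bundle, invokes the argument of \cite[4.1]{dj} and \cite[Proposition 6.34]{bp} to get the homotopy equivalence of Borel constructions, and reads off the isomorphism of equivariant cohomologies. Your added remarks on invariance and freeness of the $H^{\times k}$-action are just a slightly more explicit spelling-out of the same one-line deduction.
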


\section{Euler characteristic of $F_{G_d^n}(M, k)$}\label{euler ch}\label{euler}

The objective of this section is to calculate the Euler characteristic $\chi(F_{G_d^n}(M, k))$ for a $dn$-dimensional  $G_d^n$-manifold $\pi_d: M\longrightarrow P$.

\subsection{Euler characteristic of union--the inclusion-exclusion principle}
Suppose that $X_1,...,X_N$ are CW-complexes such that all their
possible nonempty intersections are subcomplexes of
$X_1\cup\cdots\cup X_N$. Let $\Delta^{N-1}$ be the abstract
$(N-1)$-simplex on vertex set $[N]=\{1,...,N\}$, i.e.,
$\Delta^{N-1}=2^{[N]}$ (the power set of $[N]$).  For each $a\in
2^{[N]}$, set
$$X_a=\begin{cases}
\bigcap_{i\in a}X_i & \text{ if } a\not=\emptyset\\
\bigcup_{i=1}^N X_i & \text{ if } a=\emptyset.
\end{cases}$$
Since each pair $(X_i, X_j)$ is an excisive couple of $X_i\cup X_j$,  we have the following well-known formula for euler characteristics.
\begin{prop}[Inclusion-exclusion principle]\label{in-ex}
$$\chi(X_\emptyset)=\sum_{a\in 2^{[N]}\
\atop a\not=\emptyset}(-1)^{|a|-1}\chi(X_a).$$
\end{prop}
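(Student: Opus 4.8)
The final statement in the excerpt is Proposition \ref{in-ex}, the inclusion-exclusion principle for Euler characteristics: given CW-complexes $X_1,\dots,X_N$ whose nonempty intersections are subcomplexes of $X_1\cup\cdots\cup X_N$, one has $\chi(X_\emptyset)=\sum_{\emptyset\neq a\in 2^{[N]}}(-1)^{|a|-1}\chi(X_a)$.

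The plan is to prove this by induction on $N$, using the classical additivity of Euler characteristic for an excisive couple. For the base case $N=1$ the formula reads $\chi(X_1)=\chi(X_1)$, which is trivial. For the inductive step, I would write $X_\emptyset = (X_1\cup\cdots\cup X_{N-1})\cup X_N$ and apply the two-set formula
\begin{equation*}
\chi(A\cup B)=\chi(A)+\chi(B)-\chi(A\cap B)
\end{equation*}
with $A=X_1\cup\cdots\cup X_{N-1}$ and $B=X_N$; this is valid because $(A,B)$ is an excisive couple in a CW-complex (indeed $A$ and $B$ are subcomplexes), so the Mayer–Vietoris sequence exists and gives the alternating-sum identity on Euler characteristics. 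The key algebraic point is then to identify $A\cap B$: distributivity of intersection over union gives $A\cap B=\bigcup_{i=1}^{N-1}(X_i\cap X_N)$, which is again a union of $N-1$ subcomplexes $Y_i:=X_i\cap X_N$ whose nonempty intersections $Y_{i_1}\cap\cdots\cap Y_{i_r}=X_{i_1}\cap\cdots\cap X_{i_r}\cap X_N$ are subcomplexes. Now apply the induction hypothesis twice: once to $X_1,\dots,X_{N-1}$ and once to $Y_1,\dots,Y_{N-1}$, obtaining expansions of $\chi(A)$ and $\chi(A\cap B)$ as alternating sums over nonempty subsets of $[N-1]$.

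Substituting these into the two-set formula, $\chi(X_\emptyset)$ becomes $\sum_{\emptyset\neq b\subseteq[N-1]}(-1)^{|b|-1}\chi(X_b) + \chi(X_N) - \sum_{\emptyset\neq b\subseteq[N-1]}(-1)^{|b|-1}\chi(X_b\cap X_N)$. In the last sum, $X_b\cap X_N=X_{b\cup\{N\}}$ and $|b\cup\{N\}|=|b|+1$, so that term equals $-\sum_{\emptyset\neq b\subseteq[N-1]}(-1)^{|b|-1}\chi(X_{b\cup\{N\}}) = \sum_{\substack{a\ni N,\ |a|\geq 2}}(-1)^{|a|-1}\chi(X_a)$. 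Combining with the $\chi(X_N)=(-1)^{|\{N\}|-1}\chi(X_{\{N\}})$ term accounts for all $a\subseteq[N]$ containing $N$, while the first sum accounts for all nonempty $a\subseteq[N-1]$, i.e.\ exactly the $a$ not containing $N$. Together these give $\sum_{\emptyset\neq a\subseteq[N]}(-1)^{|a|-1}\chi(X_a)$, as desired.

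The only genuine input needed is the two-set additivity formula, which in turn rests on the existence of the Mayer–Vietoris long exact sequence for the excisive couple $(X_i,X_j)$ — this is precisely the observation recorded just before the proposition, that each pair is an excisive couple of $X_i\cup X_j$. I do not expect a real obstacle here; the one place to be slightly careful is the bookkeeping of which subsets of $[N]$ are covered (those containing $N$ versus those not), and the verification that the hypotheses of the proposition are inherited by the family $\{Y_i\}$ (automatic, since intersections of subcomplexes are subcomplexes). Everything else is a routine reindexing of a finite alternating sum, so I would present the induction compactly rather than belaboring the combinatorial identity.
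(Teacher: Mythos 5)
Your proof is correct. The paper does not actually prove Proposition~\ref{in-ex} at all --- it merely notes beforehand that each pair $(X_i,X_j)$ is an excisive couple and then quotes the formula as well known --- and your induction on $N$, reducing to the two-set identity $\chi(A\cup B)=\chi(A)+\chi(B)-\chi(A\cap B)$ via Mayer--Vietoris and applying the hypothesis to the family $X_i\cap X_N$, is exactly the standard argument the paper leaves implicit, with the subset bookkeeping carried out correctly.
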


 \subsection{The $h$-polynomial and the cell-vector of $P$}
Let $P$ be a simple convex $n$-polytope. The {\em $f$-vector} of $P$ is an integer vector $(f_0, f_1, ..., f_{n-1})$, where $f_i$ is the number of faces of $P$ of codimension $i+1$ (i.e., of dimension $n-i-1$). Then the {\em $h$-vector} of $P$ is the integer vector $(h_0, h_1, ..., h_n)$ defined from the following equation
\begin{equation}\label{f-h}
h_0+h_1t+\cdots+h_nt^n=(t-1)^n+f_0(t-1)^{n-1}+\cdots+ f_{n-2}(t-1)+f_{n-1}.
\end{equation}
The $f$-vector and the $h$-vector determine each other by Equation~(\ref{f-h}).

Let ${\bf h}_P(t)=h_0+h_1t+\cdots+h_nt^n$ be a polynomial in ${\Bbb Z}[t]$. We call ${\bf h}_P(t)$ the {\em $h$-polynomial} of $P$.
Given a finite CW-complex $X$ of dimension $l$, the {\em cell--vector} $c(X)$ of $X$ is the integer vector $(c_0, c_1, ..., c_l)$ where $c_i$ denotes the number of all $i$-cells in $X$. Each simple convex $n$-polytope $P$ has a natural cell decomposition such that  the interior $\text{int}F$ of an $i$-face $F$ of $P$ is an $i$-cell. Thus,
$$c(P)=(f_{n-1}, f_{n-2}, ..., f_1, f_0, 1)$$
where $f(P)=(f_0, f_1, ..., f_{n-1})$ is the $f$-vector of $P$.

\begin{lem}\label{part} Let $\pi_d:M\longrightarrow P$ be a $dn$-dimensional $G_d^n$-manifold over a simple convex $n$-polytope $P$. Then for a positive integer $\ell$,  the Euler characteristic of $(\pi_d^{\times \ell})^{-1}(\Delta(P^{\times \ell}))$ is
$$\chi((\pi_d^{\times \ell})^{-1}(\Delta(P^{\times \ell})))=
\begin{cases} {\bf h}_P(1-2^\ell) & \text{ if } d=1\\
{\bf h}_P(1)=\chi(M) & \text{ if } d=2.
\end{cases}$$
\end{lem}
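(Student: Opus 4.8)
The plan is to compute $\chi\big((\pi_d^{\times\ell})^{-1}(\Delta(P^{\times\ell}))\big)$ by identifying the space $(\pi_d^{\times\ell})^{-1}(\Delta(P^{\times\ell}))$ with a pullback over the diagonal $\Delta(P^{\times\ell})\cong P$ and then building an explicit CW structure on it from the cell decomposition of $P$ described above. Since $\Delta(P^{\times\ell})\cong P$, the space in question is the pullback of $\pi_d^{\times\ell}: M^{\times\ell}\to P^{\times\ell}$ along the diagonal embedding $P\hookrightarrow P^{\times\ell}$; concretely it is $\{(x_1,\dots,x_\ell)\in M^{\times\ell}\mid \pi_d(x_1)=\cdots=\pi_d(x_\ell)\}$, the $\ell$-fold fibre product of $M$ over $P$. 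Over the relative interior of a face $F$ of dimension $i$, the fibre of $\pi_d$ is a single $G_d^n/G_F$-orbit, which is a point when $d=1$ (a coset of the rank-$(n-i)$ subgroup $G_F\le\mathbb{Z}_2^n$, but we want the $\ell$-fold product over that single orbit) — more precisely the fibre of $\pi_d$ over $\mathrm{int}F$ is $\mathrm{int}F\times (G_d^n/G_F)$, so the $\ell$-fold fibre product over $\mathrm{int}F$ is $\mathrm{int}F\times (G_d^n/G_F)^{\times\ell}$.

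First I would use the natural cell structure: for each $i$-face $F$ of $P$ the piece $(\pi_d^{\times\ell})^{-1}(\Delta)$ restricted over $\mathrm{int}F$ is $\mathrm{int}F\times (G_d^n/G_F)^{\times\ell}$, which contributes $i$-cells in number equal to $|(G_d^n/G_F)^{\times\ell}|$ when $d=1$ (since then $G_d^n/G_F\cong\mathbb{Z}_2^{n-i}$ is finite of order $2^{n-i}$, so $(G_d^n/G_F)^{\times\ell}$ has $2^{\ell(n-i)}$ points), and when $d=2$ the quotient $T^n/G_F\cong T^{n-i}$ is a torus, so I would instead argue directly that the fibre product is a fibre bundle over $P$ with fibre $(T^{n-i})^{\times\ell}$ over each $i$-stratum, hence homotopy-equivalent to... — actually for $d=2$ the cleanest route is to note that $(\pi_2^{\times\ell})^{-1}(\Delta)$ fibres over $P$ with contractible-in-Euler-characteristic behaviour: the standard argument (as in the multiplicativity of Euler characteristic for the Davis–Januszkiewicz construction) gives $\chi = \chi(M)$ because only the $\chi(T^{n-i})=0$ terms would contribute for $i<n$, collapsing everything to the top stratum where the fibre product is $M$ itself over $\mathrm{int}P$. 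Let me instead organize the $d=2$ case via $\chi$ being multiplicative over the stratification and $\chi(T^{k})=0$ for $k\ge1$.

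For $d=1$: summing the cell contributions, $\chi\big((\pi_1^{\times\ell})^{-1}(\Delta)\big)=\sum_{F\in\mathcal F(P)}(-1)^{\dim F}\,2^{\ell(n-\dim F)}=\sum_{i=0}^n (-1)^i f_{n-1-i}\cdot 2^{\ell(n-i)}$ with the convention $f_{-1}=1$ for the full polytope, i.e. $\sum_{i=0}^{n} f_{i-1}(-1)^{n-i}2^{\ell i}$ reindexed; this is exactly $\mathbf h_P$ evaluated via Equation~(\ref{f-h}). Concretely, substituting $t=1-2^\ell$ into the right-hand side of (\ref{f-h}) gives $(t-1)^n+\sum_{j} f_j (t-1)^{n-1-j}\cdot(\text{lower})$, and since $t-1 = -2^\ell$ one checks term-by-term that $\mathbf h_P(1-2^\ell)=\sum_{i=0}^n f_{i-1}(-1)^{n-i}2^{\ell i}$ matches the alternating cell count (the sign $(-1)^{\dim F}=(-1)^i$ appearing from $\chi$ of an $i$-cell, and $(-1)^{n-i}$ from $(t-1)^{n-i}$ with $t-1<0$, require a short bookkeeping reconciliation using $(-1)^n$ overall). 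So the main identity to verify is purely the algebraic translation between the $f$-to-$h$ relation and the alternating sum $\sum_F(-1)^{\dim F}2^{\ell\,\mathrm{codim}F}$. For $d=2$, the stratumwise fibre $(T^{n-i})^{\times\ell}$ has $\chi=0$ unless $i=n$, so only the open top stratum $\mathrm{int}P$ survives — but $\chi$ is not simply "top stratum only" since one must account for the bundle globally; the honest statement is $\chi\big((\pi_2^{\times\ell})^{-1}(\Delta)\big)=\chi(M)$ follows because this space admits a $T^{n}$-action (diagonal) whose fixed-point set equals that of the $T^n$-action on $M$ (namely $\pi_2^{-1}(\text{vertices})$), and $\chi$ of a space with torus action equals $\chi$ of the fixed set, which is the number of vertices of $P$, equal to $\mathbf h_P(1)=\chi(M)$.

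\medskip

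The step I expect to be the main obstacle is the $d=1$ combinatorial identity: making the sign conventions line up between (i) the Euler characteristic as an alternating sum over cells indexed by faces $F$ with weight $(-1)^{\dim F}$, (ii) the $f$-vector indexing (which uses codimension), and (iii) the defining polynomial relation (\ref{f-h}) for the $h$-vector evaluated at the negative argument $t=1-2^\ell$. A clean way to handle it is to first record the generating identity $\sum_{F\in\mathcal F(P)}s^{\dim F} = \sum_{i=-1}^{n-1}f_i\,s^{i+1}$ (faces of all dimensions, $f_{-1}=1$), so that $\chi\big((\pi_1^{\times\ell})^{-1}(\Delta)\big)=\sum_F (-2^\ell)^{\dim F}$ after absorbing the sign, then relate this to $\mathbf h_P$ by substituting into (\ref{f-h}); this reduces everything to a one-line substitution once the indexing is fixed. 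The $d=2$ case is comparatively soft, relying only on the well-known fact that $\chi$ of a finite torus-CW-complex equals $\chi$ of its fixed-point set together with the Davis–Januszkiewicz identification of $M^{T^n}$ with $\pi_d^{-1}(\text{vertices}(P))$ and $h_0+\cdots+h_n=\mathbf h_P(1)=$ number of vertices $=\chi(M)$.
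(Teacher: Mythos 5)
There is a genuine error at the decisive step of your $d=1$ computation: you identify the orbit over the relative interior of an $i$-face $F$ with $G_d^n/G_F\cong\mathbb{Z}_2^{\,n-i}$ (resp.\ $T^{\,n-i}$). But $G_F$ is generated by the characteristic vectors of the $n-i$ facets containing $F$, so it has rank $n-i=\operatorname{codim}F$, and hence $G_d^n/G_F\cong G_d^{\,i}$; equivalently $\pi_d^{-1}(\mathrm{int}\,F)\cong \mathbb{Z}_2^{\,i}\times\mathrm{int}\,F$ (resp.\ $T^{\,i}\times\mathrm{int}\,F$). Consequently the alternating sum you display, $\sum_{F}(-1)^{\dim F}2^{\ell\,\operatorname{codim}F}$, is not the Euler characteristic of the fibre product and does not equal $\mathbf{h}_P(1-2^{\ell})$: already for $P=[0,1]$, $M=S^1$ with the reflection, your sum gives $2^{\ell+1}-1$, while the correct value is $2-2^{\ell}=\mathbf{h}_P(1-2^{\ell})$ (for $\ell=1$ the fibre product is $M$ itself, with $\chi=0$). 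The correct weight is $2^{\ell\dim F}$, i.e.\ $\chi=\sum_{F}(-2^{\ell})^{\dim F}=\sum_{i=0}^{n}(-1)^{i}2^{i\ell}f_{n-i-1}$ (with the top term $2^{n\ell}$ coming from $\mathrm{int}\,P$), which is exactly what matches equation~(\ref{f-h}) at $t=1-2^{\ell}$. You do write the correct sum $\sum_F(-2^{\ell})^{\dim F}$ in your final paragraph, contradicting the formula in the body, and the ``bookkeeping reconciliation'' that would expose (or silently repair) this discrepancy is precisely the step you defer rather than carry out; as written the proof does not verify the identity. The same rank inversion infects your first $d=2$ sketch: with the correct fibres $T^{\,i\ell}$ it is the vertex strata, not the open top stratum, whose contributions survive, each giving $1$, for a total of $f_{n-1}=\mathbf{h}_P(1)=\chi(M)$.

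Once the ranks are corrected, your $d=1$ argument is essentially the paper's proof (cell count over the face stratification, then the $f$-to-$h$ substitution), so nothing new is gained there. Your fallback argument for $d=2$ --- that the fibre product is a compact $T^n$-complex whose fixed set is the diagonal copy of $M^{T^n}=\pi_2^{-1}(\text{vertices})$, so $\chi$ equals the number of vertices $=\mathbf{h}_P(1)=\chi(M)$ --- is sound and is a genuinely different, softer route than the paper's explicit cell decomposition of $T^{i\ell}$ with the vanishing of $\sum_j(-1)^j\binom{i\ell}{j}$; it does, however, rely on the localization fact $\chi(X)=\chi(X^{T})$ for finite $T$-CW complexes, which you should cite or justify for this (possibly non-manifold) fibre product.
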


\begin{proof}
Fix the cell decomposition of $P$ as above such that its cell-vector $c(P)=(f_{n-1}, f_{n-2}, ..., f_1, f_0, 1)$.
Let $F$ be a face of dimension $i$.  By \cite[Lemma 1.3]{dj}, we know that $\pi_d^{-1}(F)$ is still a $di$-dimensional $G_d^i$-manifold  over $F$, and in particular,   $\pi_d^{-1}(\text{\rm int}F)=G_d^i\times \text{\rm int}F$. When $d=1$,  $\pi_1^{-1}(\text{\rm int}F)$ is the disjoint union of $2^i$ copies of $\text{\rm int}F$. Since the strong diagonal $\Delta(F^{\times \ell})$ is combinatorially equivalent to $F$, $(\pi_1^{\times \ell})^{-1}(\Delta((\text{\rm int}F)^{\times\ell}))$ is the disjoint union of $2^{i\ell}$ copies of $\Delta((\text{\rm int}F)^{\times\ell})$.
Thus, the cell--vector of $(\pi_1^{\times \ell})^{-1}(\Delta(P^{\times \ell}))$ is
$$(f_{n-1}, 2^\ell f_{n-2}, ..., 2^{i\ell}f_{n-i-1}, ..., 2^{(n-2)\ell}f_1, 2^{(n-1)\ell}f_0, 2^{n\ell}).$$
Furthermore,
\begin{align*}
&\chi((\pi_1^{\times \ell})^{-1}(\Delta(P^{\times \ell})))\\
=& f_{n-1}-2^\ell f_{n-2}+\cdots+(-1)^i 2^{i\ell}f_{n-i-1}+\cdots+(-1)^{n-1}2^{(n-1)\ell}f_0+(-1)^n 2^{n\ell}\\
=&{\bf h}_P(1-2^\ell)
\end{align*}
 by Equation~(\ref{f-h}).
When $d=2$, we see easily that $(\pi_2^{\times \ell})^{-1}(\Delta((\text{\rm int}F)^{\times\ell}))=T^{i\ell}\times \Delta((\text{\rm int}F)^{\times\ell})$.
Now, for $i>0$, we give a cell decomposition for each circle $S^1$ in $T^{i\ell}$, with one $0$-cell and one $1$-cell. Then  $(\pi_2^{\times \ell})^{-1}(\Delta((\text{\rm int}F)^{\times\ell}))$ contains ${{i\ell}\choose j}$ cells of dimension-$(i+j)$ for $0\leq j\leq i\ell$. Thus,
all $i$-cells of $P$ contribute ${{i\ell}\choose j}f_{n-i-1}$ cells of dimension-$(i+j)$ in $(\pi_2^{\times \ell})^{-1}(\Delta(P^{\times \ell}))$ where $0\leq j\leq i\ell$. Since $\sum_{j=0}^{i\ell}(-1)^j{{i\ell}\choose j}=0$ for every $i>0$, by a direct calculation we have
\begin{align*}
&\chi((\pi_2^{\times \ell})^{-1}(\Delta(P^{\times \ell})))= f_{n-1}={\bf h}_P(1)=\chi(M)
\end{align*}
as desired.
\end{proof}

\subsection{Subgraphs of $\mathcal{K}_k$ and partitions of $k$ and $[k]$ }

 Let $\mathcal{K}_k$ be  the complete graph of degree $k-1$, which contains $k$ vertices and ${k\choose 2}$ edges.
 We label $k$ vertices of $\mathcal{K}_k$ by $1, ..., k$ respectively, and ${k\choose 2}$ edges by pairs $(i, j), 1\leq i<j\leq k$, respectively.
 Thus we may identify  $\mathcal{K}_k$  with  the 1-skeleton of the abstract $(k-1)$-simplex $\Delta^{k-1}$ on vertex set $[k]=\{1,..., k\}$. Obviously, $\Delta^{k-1}=2^{[k]}$, the power set of $[k]$.

\begin{defn}
A subgraph $\Gamma$ of $\mathcal{K}_k$ is said to be {\em vertex-full} if the vertex set of $\Gamma$ is $[k]$.
\end{defn}
By ${\bf VF}(\mathcal{K}_k)$ we denote the set of all vertex-full subgraphs  of $\mathcal{K}_k$.

\begin{lem}\label{1-1}
There is a one-to-one correspondence between all subsets of the power set $2^{[[k]]}$ and all vertex-full subgraphs of ${\bf VF}(\mathcal{K}_k)$, where $[[k]]=\{(i,j)| 1\leq i<j\leq k\}$.
\end{lem}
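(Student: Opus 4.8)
The plan is to construct the bijection by the obvious edge-set assignment. A vertex-full subgraph $\Gamma\in{\bf VF}(\mathcal{K}_k)$ has vertex set $[k]$ by definition, so $\Gamma$ is completely determined by its edge set $E(\Gamma)$, which is a subset of the edge set $E(\mathcal{K}_k)=\{(i,j)\mid 1\le i<j\le k\}=[[k]]$. Conversely, any subset $S\subseteq[[k]]$ determines a vertex-full subgraph of $\mathcal{K}_k$: take $[k]$ as the vertex set and $S$ as the edge set; this is a legitimate subgraph of $\mathcal{K}_k$ since $S\subseteq E(\mathcal{K}_k)$ and its vertex set is all of $[k]$, hence vertex-full. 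I would define the two maps
$$
\Phi:{\bf VF}(\mathcal{K}_k)\To 2^{[[k]]},\quad \Phi(\Gamma)=E(\Gamma),
\qquad
\Psi:2^{[[k]]}\To{\bf VF}(\mathcal{K}_k),\quad \Psi(S)=\big([k],S\big),
$$
and then check $\Phi\circ\Psi=\id$ and $\Psi\circ\Phi=\id$.

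The verification is routine: for $S\subseteq[[k]]$ we have $\Phi(\Psi(S))=E(([k],S))=S$, and for a vertex-full subgraph $\Gamma$ we have $\Psi(\Phi(\Gamma))=([k],E(\Gamma))=\Gamma$, the last equality because a vertex-full subgraph is determined by its vertex set $[k]$ together with its edge set. This establishes that $\Phi$ and $\Psi$ are mutually inverse bijections, proving the claim. The only point that needs a word of care is the convention that a subgraph is allowed to have isolated vertices (so that the empty edge set $S=\emptyset$ corresponds to the subgraph with vertex set $[k]$ and no edges, which is still vertex-full); once that is fixed the correspondence is exact, with no exceptional cases.

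I do not expect any genuine obstacle here — the statement is essentially the tautology that a graph on a fixed vertex set is the same data as its edge set, and the identification $E(\mathcal{K}_k)=[[k]]$ was already built into the labelling of $\mathcal{K}_k$ as the $1$-skeleton of $\Delta^{k-1}$. The mild subtlety, and the only thing worth flagging explicitly, is keeping the vertex-full hypothesis in play so that no information about which vertices are present is lost; dropping that hypothesis would break surjectivity of $\Phi$ onto graphs with the full vertex set. This lemma will presumably be used in the next section to index the terms in an inclusion–exclusion expansion (Proposition \ref{in-ex}) over subgraphs of $\mathcal{K}_k$, matching edges $(i,j)$ with the diagonal pieces $\Delta_{i,j}(P^{\times k})$.
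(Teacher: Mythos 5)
Your proposal is correct and follows essentially the same route as the paper: both identify a vertex-full subgraph with its edge set $E(\Gamma)\subseteq[[k]]$, with the empty edge set corresponding to the discrete graph on $[k]$, and recover a vertex-full subgraph from an edge subset by adjoining any missing vertices as isolated points. Your explicit pair of mutually inverse maps $\Phi,\Psi$ is just a slightly more formal packaging of the paper's argument; there is nothing missing.
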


\begin{proof}
 Each vertex-full subgraph $\Gamma$ of $\mathcal{K}_k$ uniquely determines a subset $E(\Gamma)$ of $2^{[[k]]}$, where $E(\Gamma)$ denotes the set of all edges of $\Gamma$. Note that the discrete  subgraph $[k]$ of $\mathcal{K}_k$ corresponds to the empty set $\emptyset$ of
$2^{[[k]]}$. Conversely, let the empty set $\emptyset$ of $2^{[[k]]}$ correspond to the discrete  subgraph $[k]$ of $\mathcal{K}_k$. Each nonempty subset  of $2^{[[k]]}$ determines a unique subgraph $\Gamma$ of $\mathcal{K}_k$. If the vertex set of $\Gamma$ does not cover $[k]$, then we can add those missing vertices as one-point subgraphs to $\Gamma$ to give the required vertex-full subgraph of $\mathcal{K}_k$.
\end{proof}

\begin{defn} Given a vertex-full subgraph $\Gamma$ in ${\bf VF}(\mathcal{K}_k)$, define
$$
\Delta_\Gamma(P^{\times k})=
\begin{cases}
\bigcap_{(i,j)\in E(\Gamma)}\Delta_{i,j}(P^{\times k}) & \text{ if } E(\Gamma)\neq\emptyset\\
P^{\times k} & \text{ if } E(\Gamma)=\emptyset
\end{cases}
$$
where $E(\Gamma)$ denotes the set of all edges of $\Gamma$. Generally,  $\Gamma$ may not be connected. By $C(\Gamma)$ we denote the set of all connected subgraphs of $\Gamma$. Write $C(\Gamma)=\{\Gamma_1, ..., \Gamma_s\}$. Then $\Gamma=\coprod_{k=1}^s\Gamma_k$ (a disjoint union of $\Gamma_1, ..., \Gamma_s$).
\end{defn}

\begin{lem}\label{product}
Let $\Gamma$ be a vertex-full subgraph in ${\bf VF}(\mathcal{K}_k)$ with $C(\Gamma)=\{\Gamma_1, ..., \Gamma_s\}$.
Then $$\Delta_\Gamma(P^{\times k})=\prod_{l=1}^s\Delta(P^{\times|V(\Gamma_l)|})$$
where $V(\Gamma_l)$ denotes the vertex set of $\Gamma_l$.
\end{lem}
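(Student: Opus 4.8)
The plan is to characterize membership in $\Delta_\Gamma(P^{\times k})$ purely in terms of the connected components of $\Gamma$, using that equality of coordinates is an equivalence relation, and then to read off the product decomposition. First I would unwind the definitions: for $E(\Gamma)\neq\emptyset$ a point $(p_1,\ldots,p_k)\in P^{\times k}$ lies in $\Delta_\Gamma(P^{\times k})=\bigcap_{(i,j)\in E(\Gamma)}\Delta_{i,j}(P^{\times k})$ exactly when $p_i=p_j$ for every edge $(i,j)\in E(\Gamma)$. Since $\Gamma$ is vertex-full, the components $\Gamma_1,\ldots,\Gamma_s$ partition $V(\Gamma)=[k]$, so that $\sum_{l=1}^s|V(\Gamma_l)|=k$ and the right-hand product makes sense.

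Next I would observe that, for a fixed point $(p_1,\ldots,p_k)$, the binary relation ``$i\simeq j$ iff $p_i=p_j$'' on $[k]$ is an equivalence relation, hence reflexive, symmetric and transitive. Consequently, if $p_i=p_j$ holds along every edge of $\Gamma$, then by walking along a path in $\Gamma$ it holds for any two vertices $i,j$ lying in a common component $\Gamma_l$; conversely, if $p_i=p_j$ whenever $i,j$ lie in the same component, then in particular it holds along every edge, since the two endpoints of an edge lie in one component. Therefore $(p_1,\ldots,p_k)\in\Delta_\Gamma(P^{\times k})$ if and only if, for each $l=1,\ldots,s$, the coordinates indexed by $V(\Gamma_l)$ are all equal. (The degenerate case $E(\Gamma)=\emptyset$ fits this description as well: then the components are $k$ singletons, there is no constraint, and $\Delta_\Gamma(P^{\times k})=P^{\times k}=\prod_{l=1}^k\Delta(P^{\times 1})$.)

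Finally I would package this as the asserted product. Ordering the components $\Gamma_1,\ldots,\Gamma_s$ and the vertices within each $V(\Gamma_l)$ gives a permutation of the $k$ coordinates, hence a homeomorphism $P^{\times k}\cong\prod_{l=1}^s P^{\times|V(\Gamma_l)|}$; under this identification the locus cut out by ``coordinates constant on each $V(\Gamma_l)$'' is precisely $\prod_{l=1}^s\Delta(P^{\times|V(\Gamma_l)|})$, which is the claim. Equivalently, and without choosing orderings, $\Delta_\Gamma(P^{\times k})$ is the image of the embedding $P^{\times s}\hookrightarrow P^{\times k}$ sending $(q_1,\ldots,q_s)$ to the tuple $(p_1,\ldots,p_k)$ with $p_i=q_l$ for $i\in V(\Gamma_l)$, and this embedding is a homeomorphism onto $\prod_{l=1}^s\Delta(P^{\times|V(\Gamma_l)|})$. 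I do not expect a genuine obstacle here; the only point requiring a word of care is that the equality in the statement is understood up to this canonical reindexing of the $k$ factors, exactly as in Proposition~\ref{constr} and the remark following it.
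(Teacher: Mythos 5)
Your proof is correct and follows essentially the same route as the paper's: the paper also reduces to each connected component $\Gamma_l$, where the intersection of the $\Delta_{i,j}$ over the edges of $\Gamma_l$ is (up to the coordinate reindexing you make explicit) $P^{\times(k-|V(\Gamma_l)|)}\times\Delta(P^{\times|V(\Gamma_l)|})$, and then intersects over the components. Your spelled-out transitivity argument and the remark that the equality holds up to the canonical permutation of factors simply make precise what the paper compresses into the phrase ``combinatorially equivalent.''
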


\begin{proof}
Obviously, if $\Gamma=[k]$, then the required equality holds. Suppose that $\Gamma\not=[k]$. For each component $\Gamma_l$ of $\Gamma$,
$\bigcap_{(i,j)\in E(\Gamma_l)}\Delta_{i,j}(P^{\times k})$ is combinatorially equivalent to $P^{\times (k-|V(\Gamma_l)|)}\times \Delta(P^{\times |V(\Gamma_l)|})$.  Thus
\begin{align*}
\Delta_\Gamma(P^{\times k})=\bigcap_{l=1}^s\bigcap_{(i,j)\in E(\Gamma_l)}\Delta_{i,j}(P^{\times k})=\prod_{l=1}^s\Delta(P^{\times|V(\Gamma_l)|})
\end{align*}
as desired.
\end{proof}

Recall that a {\em partition} of $k$ is an unordered sequence $(k_1, ..., k_s)$ of positive integers with sum $k$, and a {\em partition} of $[k]$ is an unordered sequence of nonempty subsets of $[k]$ which are pairwise disjoint and whose union is $[k]$.
Clearly, every  vertex-full subgraph $\Gamma=\coprod_{l=1}^s\Gamma_l$ of ${\bf VF}(\mathcal{K}_k)$ gives a partition $(|V(\Gamma_1)|, ..., |V(\Gamma_s)|)$  of $k$, denoted by $k(\Gamma)$. In addition, each vertex-full subgraph  of ${\bf VF}(\mathcal{K}_k)$ also determines a  partition $(V(\Gamma_1), ..., V(\Gamma_s))$ of $[k]$.

\begin{lem}\label{coe}
Let  $I=(k_1, ..., k_s)$ be a partition of $k$. Then the number of those combinatorially equivalent vertex-full subgraphs $\Gamma$ with $k(\Gamma)=I$ of ${\bf VF}(\mathcal{K}_k)$ is
$${{k!}\over {k_1!\cdots k_s!r_1!\cdots r_s!}}$$
where $r_i$ denotes the  number of times that $k_i$ appears in $I$.
\end{lem}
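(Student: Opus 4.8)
The plan is to identify the vertex-full subgraphs being counted with the set partitions of $[k]$ of type $I$, and then to evaluate the number of such set partitions by a standard overcounting argument.

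First I would make precise what is being counted. The vertex-full subgraphs at issue are those combinatorially equivalent to the disjoint union $K_{k_1}\sqcup\cdots\sqcup K_{k_s}$ of complete graphs; all such subgraphs of $\mathcal{K}_k$ are indeed mutually combinatorially equivalent, they satisfy $k(\Gamma)=I$, and they are the natural representatives of their class, since by Lemma~\ref{product} the space $\Delta_\Gamma(P^{\times k})$ — and already the partition $k(\Gamma)$ itself — is unaffected by which edges are present inside a component. Such a $\Gamma$ is nothing but the unordered family $\{V(\Gamma_1),\dots,V(\Gamma_s)\}$ of vertex sets of its connected components, i.e.\ a partition of the set $[k]$ whose multiset of block sizes is $\{k_1,\dots,k_s\}$. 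Hence the number in the statement equals the number $N(I)$ of set partitions of $[k]$ of type $I$.

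It remains to compute $N(I)$. I would start from the $k!$ linear orders of $[k]$ and, given one, cut it after the first $k_1$ entries, then after the next $k_2$, and so on, producing an \emph{ordered} tuple $(B_1,\dots,B_s)$ of pairwise disjoint subsets with $|B_i|=k_i$. Every such ordered tuple arises from exactly $k_1!\,k_2!\cdots k_s!$ of the linear orders (permute within each block), so there are $k!/(k_1!\cdots k_s!)$ ordered tuples. Passing from ordered tuples to unordered partitions, each partition of type $I$ underlies exactly $r_1!\,r_2!\cdots r_s!$ of these ordered tuples — precisely those obtained from one another by permuting blocks of coinciding size among themselves — so dividing gives
$$N(I)=\frac{k!}{k_1!\cdots k_s!\, r_1!\cdots r_s!},$$
the asserted formula.

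I do not anticipate a genuine difficulty; the only point calling for care is keeping the two symmetry factors distinct — the $k_1!\cdots k_s!$ from reorderings \emph{within} the blocks versus the $r_1!\cdots r_s!$ from reorderings \emph{of} the equal-size blocks — so that one lands on unordered set partitions rather than on ordered tuples of blocks. An equivalent and perhaps cleaner packaging is to count labelled copies of the fixed graph $\Gamma_0=K_{k_1}\sqcup\cdots\sqcup K_{k_s}$ inside $\mathcal{K}_k$ as $k!/|\mathrm{Aut}(\Gamma_0)|$, using that $\mathrm{Aut}(\Gamma_0)$ is built from the symmetric groups $S_{k_i}$ on the individual cliques together with the permutations of cliques of a common size, whence $|\mathrm{Aut}(\Gamma_0)|=k_1!\cdots k_s!\, r_1!\cdots r_s!$; this makes transparent why the count is the same whether one reasons with subgraphs or with set partitions.
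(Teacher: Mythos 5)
Your proposal is correct and takes essentially the same route as the paper's own proof, which likewise identifies the subgraphs in question bijectively with the set partitions $(a_1,\dots,a_s)$ of $[k]$ with $|a_l|=k_l$ and then obtains the count $\frac{k!}{k_1!\cdots k_s!\,r_1!\cdots r_s!}$ from ``an easy argument.'' Your contribution is merely to spell out that easy argument (the linear-order overcounting, equivalently $k!/\vert\mathrm{Aut}(K_{k_1}\sqcup\cdots\sqcup K_{k_s})\vert$), so there is no gap and no genuinely different method.
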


\begin{proof}
Obviously, those combinatorially equivalent vertex-full subgraphs $\Gamma$ with $k(\Gamma)=I$ of ${\bf VF}(\mathcal{K}_k)$ bijectively correspond to those
partitions $(a_1, ..., a_s)$ with $|a_l|=k_l$ of $[k]$. The desired number then follows from an easy argument.
\end{proof}

\subsection{Calculation of Euler characteristic} \label{subsection3.4}
Now let us calculate  $\chi(F_{G_d^n}(M, k))$ for a  $G_d^n$-manifold
 $\pi_d: M\longrightarrow P$ over a simple convex polytope $P$.

 \begin{defn}
 Let  $I=(k_1, ..., k_s)$ be a partition of $k$. Define
 $$\mathcal{C}_I=\sum_{\Gamma\in {\bf VF}(\mathcal{K}_k)\
 \atop k(\Gamma)=I} (-1)^{|E(\Gamma)|}.$$
 \end{defn}

 \begin{thm} \label{main-1 thm}
Let $\pi_d: M\longrightarrow P$ be a $dn$-dimensional $G_d^n$-manifold over a simple convex polytope.   Then
$$\chi(F_{G_d^n}(M, k))=
\begin{cases}
(-1)^{kn}\sum_{I=(k_1,..., k_s)}\mathcal{C}_I\prod_{l=1}^s{\bf h}_P(1-2^{k_l}) &\text{ if } d=1\\
\chi(F(M, k)) & \text{ if } d=2
\end{cases} $$
where $I=(k_1,..., k_s)$ runs over all partitions of $k$.
\end{thm}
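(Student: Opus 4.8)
The plan is to compute $\chi(F_{G_d^n}(M,k))$ by working inside the closed manifold $M^{\times k}$ and applying the inclusion--exclusion principle (Proposition~\ref{in-ex}) to the arrangement of pulled-back diagonals. By De Morgan's law, Proposition~\ref{constr} says exactly that $M^{\times k}\setminus F_{G_d^n}(M,k)=\bigcup_{1\le i<j\le k}(\pi_d^{\times k})^{-1}\bigl(\Delta_{i,j}(P^{\times k})\bigr)$. Fixing a smooth triangulation of $P$ subordinate to its face structure, pulling it back through $\pi_d^{\times k}$ as in the proof of Lemma~\ref{part}, and subdividing, one gets compatible CW structures for which $M^{\times k}$ is a finite complex and every $(\pi_d^{\times k})^{-1}(\Delta_{i,j}(P^{\times k}))$, as well as every intersection of such sets, is a subcomplex. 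Since $M^{\times k}$ is a closed $dkn$-manifold and $F_{G_d^n}(M,k)$ is the open submanifold obtained from it by deleting a closed subcomplex, additivity of the compactly supported Euler characteristic gives $\chi_c(F_{G_d^n}(M,k))=\chi(M^{\times k})-\chi\bigl(\bigcup_{i<j}(\pi_d^{\times k})^{-1}(\Delta_{i,j}(P^{\times k}))\bigr)$, while Poincar\'e duality (with ${\Bbb Z}_2$ coefficients, so that no orientability hypothesis is needed) gives $\chi(F_{G_d^n}(M,k))=(-1)^{dkn}\chi_c(F_{G_d^n}(M,k))$. The factor $(-1)^{dkn}$ is precisely the $(-1)^{kn}$ in the case $d=1$, and is trivial for $d=2$.

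Next I would evaluate the union term. By Lemma~\ref{1-1} the subsets of $[[k]]$ are in bijection with the vertex-full subgraphs $\Gamma$ of $\mathcal{K}_k$ via $a\leftrightarrow E(\Gamma)$, with $|a|=|E(\Gamma)|$, and $\bigcap_{(i,j)\in E(\Gamma)}(\pi_d^{\times k})^{-1}(\Delta_{i,j}(P^{\times k}))=(\pi_d^{\times k})^{-1}(\Delta_\Gamma(P^{\times k}))$. Writing $C(\Gamma)=\{\Gamma_1,\dots,\Gamma_s\}$, Lemma~\ref{product} identifies $\Delta_\Gamma(P^{\times k})$ with $\prod_{l=1}^s\Delta(P^{\times|V(\Gamma_l)|})$ along the partition $(V(\Gamma_1),\dots,V(\Gamma_s))$ of $[k]$; hence $(\pi_d^{\times k})^{-1}(\Delta_\Gamma(P^{\times k}))$ is homeomorphic to $\prod_{l=1}^s(\pi_d^{\times|V(\Gamma_l)|})^{-1}(\Delta(P^{\times|V(\Gamma_l)|}))$, and multiplicativity of $\chi$ together with Lemma~\ref{part} yields
$$\chi\bigl((\pi_d^{\times k})^{-1}(\Delta_\Gamma(P^{\times k}))\bigr)=\begin{cases}\displaystyle\prod_{l=1}^s{\bf h}_P\bigl(1-2^{|V(\Gamma_l)|}\bigr)&\text{if }d=1,\\[2mm]\displaystyle\prod_{l=1}^s{\bf h}_P(1)=\bigl(\chi(M)\bigr)^s&\text{if }d=2.\end{cases}$$
In particular this value depends only on the partition $k(\Gamma)=(|V(\Gamma_1)|,\dots,|V(\Gamma_s)|)$ of $k$.

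Then I would assemble the two formulas. Absorbing $\chi(M^{\times k})$ as the contribution of the discrete subgraph $\Gamma=[k]$ (for which $\Delta_{[k]}(P^{\times k})=P^{\times k}$, $|E([k])|=0$, $k([k])=(1,\dots,1)$, and the value above equals $(\chi(M))^k=\chi(M^{\times k})$), Proposition~\ref{in-ex} turns the identities of the previous paragraph into
$$\chi_c(F_{G_d^n}(M,k))=\sum_{\Gamma\in{\bf VF}(\mathcal{K}_k)}(-1)^{|E(\Gamma)|}\,\chi\bigl((\pi_d^{\times k})^{-1}(\Delta_\Gamma(P^{\times k}))\bigr).$$
Grouping by $k(\Gamma)=I=(k_1,\dots,k_s)$ and using the definition $\mathcal{C}_I=\sum_{k(\Gamma)=I}(-1)^{|E(\Gamma)|}$, this becomes $\sum_I\mathcal{C}_I\prod_{l=1}^s{\bf h}_P(1-2^{k_l})$ when $d=1$ and $\sum_I\mathcal{C}_I\bigl({\bf h}_P(1)\bigr)^s$ when $d=2$; multiplying by $(-1)^{dkn}$ gives the asserted expressions. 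For $d=2$, running the identical computation for the trivial $G_d^n$-action on $M$ (where the analogue of Lemma~\ref{part} for a coordinate block of size $\ell$ is again $\chi(M)$, independently of $\ell$) expresses $\chi(F(M,k))$ by the very same sum, so $\chi(F_{T^n}(M,k))=\chi(F(M,k))$.

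The step I expect to be the main obstacle is the passage through $\chi_c$ and the resulting sign $(-1)^{dkn}$: ordinary Euler characteristic is \emph{not} additive for open complements, so the argument must genuinely run through the compactly supported Euler characteristic, and dropping this factor gives the wrong value as soon as $kn$ is odd (already visible for $M=S^1$, $k=3$, where the answer is $3!\cdot 2=12$ by Theorem~\ref{theorem1.3}). A minor additional point, needed only to recover the closed form of $\mathcal{C}_I$ stated in the introduction --- and, via the identity $\sum_I\mathcal{C}_I\,x^{s}=\prod_{i=0}^{k-1}(x-i)$, to derive Theorem~\ref{non-eq} and the F\'elix-Thomas formula --- is the classical weighted graph count $\sum_{\Gamma\text{ connected on }[m]}(-1)^{|E(\Gamma)|}=(-1)^{m-1}(m-1)!$, which follows by the exponential formula from $\sum_{G\text{ on }[m]}(-1)^{|E(G)|}=0$ for $m\ge 2$ (and $=1$ for $m=1$), combined with Lemma~\ref{coe}.
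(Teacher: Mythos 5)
Your proposal is correct and takes essentially the same route as the paper: the combinatorial core is identical (inclusion--exclusion over vertex-full subgraphs of $\mathcal{K}_k$ via Lemmas~\ref{1-1}, \ref{product} and~\ref{part}, grouping by the partition $k(\Gamma)$ to produce $\mathcal{C}_I$, absorbing $\chi(M^{\times k})$ as the discrete-graph term, and the parallel non-equivariant computation for $d=2$). The only difference is cosmetic: the paper obtains the sign $(-1)^{dkn}$ in one step by Lefschetz duality for the pair $\bigl(M^{\times k},(\pi_d^{\times k})^{-1}(\widetilde{\Delta}(P^{\times k}))\bigr)$, while you unpack the same duality through compactly supported Euler characteristic plus mod~$2$ Poincar\'e duality.
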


 \begin{proof}
 First, we calculate $\chi((\pi_d^{\times k})^{-1}(\widetilde{\Delta}(P^{\times k})))$ by Proposition~\ref{constr}.
Since $2^{[[k]]}$ is combinatorially equivalent to $2^{[N]}$ where $N={k\choose 2}$,  we have that
\begin{align*}
&\chi((\pi_d^{\times k})^{-1}(\widetilde{\Delta}(P^{\times k})))\\
=&\sum_{\Gamma=\coprod_{l=1}^s\Gamma_l\in {\bf VF}(\mathcal{K}_k)\
\atop E(\Gamma)\neq\emptyset}(-1)^{|E(\Gamma)|-1}\chi((\pi_d^{\times k})^{-1}(\Delta_\Gamma(P^{\times k}))) \text{
(by Lemma~\ref{1-1} and Prop.~\ref{in-ex}})\\
=& \sum_{\Gamma=\coprod_{l=1}^s\Gamma_l\in {\bf VF}(\mathcal{K}_k)\
\atop E(\Gamma)\neq\emptyset}(-1)^{|E(\Gamma)|-1}\chi((\pi_d^{\times k})^{-1}(\prod_{l=1}^s\Delta(P^{\times|V(\Gamma_l)|})))
\ (\text{by Lemma~\ref{product}})\\
=& \sum_{\Gamma=\coprod_{l=1}^s\Gamma_l\in {\bf VF}(\mathcal{K}_k)\
\atop E(\Gamma)\neq\emptyset}(-1)^{|E(\Gamma)|-1}\prod_{l=1}^s\chi((\pi_d^{\times |V(\Gamma_l)|})^{-1}(\Delta(P^{\times|V(\Gamma_l)|})))\\
=&\begin{cases}
 \sum_{\Gamma=\coprod_{l=1}^s\Gamma_l\in {\bf VF}(\mathcal{K}_k)\
\atop E(\Gamma)\neq\emptyset}(-1)^{|E(\Gamma)|-1}\prod_{l=1}^s{\bf h}_P(1-2^{|V(\Gamma_l)|}) & \text{ if } d=1\\
\sum_{\Gamma=\coprod_{l=1}^s\Gamma_l\in {\bf VF}(\mathcal{K}_k)\
\atop E(\Gamma)\neq\emptyset}(-1)^{|E(\Gamma)|-1}\chi(M)^s & \text{ if } d=2
\end{cases} (\text{by Lemma~\ref{part}.})
\end{align*}
Since each vertex-full subgraph $\Gamma=\coprod_{l=1}^s\Gamma_l$ of $\mathcal{K}_k$ corresponds to a unique partition $k(\Gamma)=(|V(\Gamma_1)|, ...,
|V(\Gamma_s)|)$ of $k$, we further have that
\begin{align*}\label{euler}
\chi((\pi_d^{\times k})^{-1}(\widetilde{\Delta}(P^{\times k})))
=& \begin{cases} -\sum_{I=(k_1,..., k_s)\
\atop I\not=(1, ..., 1)}\mathcal{C}_I\prod_{l=1}^s{\bf h}_P(1-2^{k_l}) & \text{ if } d=1\\
 -\sum_{I=(k_1,..., k_s)\
\atop I\not=(1, ..., 1)}\mathcal{C}_I\chi(M)^s  & \text{ if } d=2
\end{cases}
\end{align*}
where $I$ runs over those partitions except for $(1,..., 1)$ of $k$.  A direct calculation gives that $\mathcal{C}_{(1,...,1)}=1$,  so $$\chi(M^{\times k})=\begin{cases}
\mathcal{C}_{(1,...,1)}\big({\bf h}_P(-1)\big)^k & \text{ if } d=1\\
\mathcal{C}_{(1,...,1)}\big({\bf h}_P(1)\big)^k=\mathcal{C}_{(1, ..., 1)}\chi(M)^k & \text{ if } d=2
\end{cases}$$ by Lemma~\ref{part}.

Next,   we conclude that

\begin{align*}
&\chi(F_{G_d^n}(M,k))\\
= & (-1)^{dkn}\big(\chi(M^{\times k})-\chi((\pi_d^{\times k})^{-1}(\widetilde{\Delta}(P^{\times k})))\big)
\text{ (by Lefschetz duality theorem)}\\
=&
\begin{cases}
 (-1)^{kn}\big(\chi(M^{\times k})+\sum_{I=(k_1,..., k_s)\
\atop I\not=(1, ..., 1)}\mathcal{C}_I\prod_{l=1}^s{\bf h}_P(1-2^{k_l})\big)& \text{ if } d=1 \\
\chi(M^{\times k})+\sum_{I=(k_1,..., k_s)\
\atop I\not=(1, ..., 1)}\mathcal{C}_I\chi(M)^s  & \text{ if } d=2
\end{cases}\\
=&
\begin{cases}
 (-1)^{kn}\sum_{I=(k_1,..., k_s)}\mathcal{C}_I\prod_{l=1}^s{\bf h}_P(1-2^{k_l}) & \text{ if } d=1 \\
\sum_{I=(k_1,..., k_s)}\mathcal{C}_I\chi(M)^s  & \text{ if } d=2.
\end{cases}\\
\end{align*}
We can employs the above way  to the non-equivariant case for $d=2$, so that we may obtain that
\begin{align*}
\chi(F(M, k))=& (-1)^{2kn}\big(\chi(M^{\times k})-\chi(\widetilde{\Delta}(M^{\times k}))\big)\\
=& \chi(M^{\times k})-\chi(\widetilde{\Delta}(M^{\times k})\\
=& \sum_{I=(k_1,..., k_s)}\mathcal{C}_I\chi(M)^s\\
=& \chi(F_{T^n}(M,k))
\end{align*}
as desired.
\end{proof}

We know from~\cite{mu}  that the Lefschetz duality theorem holds for compact triangulated homology manifolds. Thus, using the proof method of Theorem~\ref{main-1 thm}, we can obtain the following formula for more general non-equivariant configuration spaces.

\begin{thm}\label{non-eq1}
Let $M$ be a compact triangulated homology $n$-manifold. Then
$$\chi(F(M,k))=(-1)^{kn}\sum_IC_I\chi(M)^s$$
where $I=(k_1, ..., k_s)$ runs over all partitions of $k$.
\end{thm}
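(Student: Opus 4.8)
The argument is the non-equivariant analogue, for the ``$d=2$'' case, of the proof of Theorem~\ref{main-1 thm}: one replaces the polytope $P$ throughout by the homology manifold $M$, and the even-dimensionality of a quasi-toric manifold by the weaker bookkeeping $\dim M=n$. First I would record that the combinatorial and set-theoretic ingredients of that proof hold verbatim with $P$ replaced by $M$: the bijection of Lemma~\ref{1-1} between subsets of $2^{[[k]]}$ and vertex-full subgraphs of $\mathcal{K}_k$; the identification $\Delta_\Gamma(M^{\times k})=\prod_{l=1}^s\Delta(M^{\times|V(\Gamma_l)|})$ of Lemma~\ref{product}, so that $\Delta_\Gamma(M^{\times k})\cong M^{\times s}$ when $\Gamma$ has $s$ connected components and hence $\chi(\Delta_\Gamma(M^{\times k}))=\chi(M)^s$; and the counting Lemma~\ref{coe} underlying the definition of $\mathcal{C}_I$. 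Since $M$ is a compact triangulated homology $n$-manifold, $M^{\times k}$ is a compact triangulated homology $kn$-manifold, and all the diagonals $\Delta_{i,j}(M^{\times k})$, together with their intersections $\Delta_\Gamma(M^{\times k})$, are polyhedral subspaces; passing to a common subdivision we may take all of them to be subcomplexes of a finite CW structure on $M^{\times k}$, so Proposition~\ref{in-ex} applies to the cover $\widetilde{\Delta}(M^{\times k})=\bigcup_{1\le i<j\le k}\Delta_{i,j}(M^{\times k})$.

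Next I would run inclusion-exclusion exactly as in the proof of Theorem~\ref{main-1 thm}. Grouping vertex-full subgraphs by the partition $k(\Gamma)=I=(k_1,\dots,k_s)$ of $k$ they induce, and using that $E(\Gamma)\ne\emptyset$ is equivalent to $I\ne(1,\dots,1)$,
\begin{align*}
\chi(\widetilde{\Delta}(M^{\times k}))
&=\sum_{\Gamma\in{\bf VF}(\mathcal{K}_k),\ E(\Gamma)\ne\emptyset}(-1)^{|E(\Gamma)|-1}\chi(\Delta_\Gamma(M^{\times k}))
=-\sum_{I\ne(1,\dots,1)}\mathcal{C}_I\,\chi(M)^{s}.
\end{align*}
Because $M^{\times k}$ is a compact homology $kn$-manifold and $F(M,k)=M^{\times k}-\widetilde{\Delta}(M^{\times k})$ is an open homology $kn$-manifold, the Lefschetz duality theorem for compact triangulated homology manifolds~\cite{mu}, together with additivity of the (compactly supported) Euler characteristic, yields $\chi(F(M,k))=(-1)^{kn}\big(\chi(M^{\times k})-\chi(\widetilde{\Delta}(M^{\times k}))\big)$. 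Since $\chi(M^{\times k})=\chi(M)^k=\mathcal{C}_{(1,\dots,1)}\chi(M)^k$, the partition $(1,\dots,1)$ having $s=k$ parts, we conclude
\begin{align*}
\chi(F(M,k))=(-1)^{kn}\Big(\chi(M)^k+\sum_{I\ne(1,\dots,1)}\mathcal{C}_I\,\chi(M)^{s}\Big)=(-1)^{kn}\sum_{I}\mathcal{C}_I\,\chi(M)^{s},
\end{align*}
the sum running over all partitions $I$ of $k$, which is the asserted formula.

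The one point requiring care beyond transcribing the earlier proof is the duality input: for closed smooth manifolds Lefschetz duality is classical, but here $M$ — and hence $M^{\times k}$ and $F(M,k)$ — is only a homology manifold, so one must invoke the version of Poincar\'e--Lefschetz duality valid for compact triangulated homology manifolds; this is exactly what \cite{mu} supplies, and it is what converts $\chi_c(F(M,k))$ into $(-1)^{kn}\chi(F(M,k))$. A minor secondary point is arranging all $\binom{k}{2}$ diagonals to be subcomplexes of a single CW structure simultaneously, so that Proposition~\ref{in-ex} is legitimately applicable; this is routine PL topology once a triangulation of $M$ has been fixed. Everything else in the proof is formal.
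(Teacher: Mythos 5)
Your proposal is correct and takes essentially the same route as the paper: the paper obtains Theorem~\ref{non-eq1} precisely by rerunning the inclusion-exclusion argument over vertex-full subgraphs from the proof of Theorem~\ref{main-1 thm} with $P$ (and the preimages under $\pi_d$) replaced by $M$ itself, and by invoking Lefschetz duality for compact triangulated homology manifolds from~\cite{mu} to convert $\chi(M^{\times k})-\chi(\widetilde{\Delta}(M^{\times k}))$ into $(-1)^{kn}\chi(F(M,k))$, exactly as you do. The only difference is that the paper leaves this transcription implicit, while you additionally spell out the routine PL point of making all diagonals subcomplexes so that the inclusion-exclusion principle applies.
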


In Theorem~\ref{non-eq1}, if we further write $$\chi(F(M,k))=(-1)^{kn}\sum_IC_I\chi(M)^s=(-1)^{kn}\sum_{s=1}^k\big(\sum_{I=(k_1, ..., k_s)}\mathcal{C}_I\big)\chi(M)^s$$
then we see that $\chi(F(M,k))$ is actually a polynomial (with ${\Bbb Z}$ coefficients) of $\chi(M)$ of degree $k$. By $g(t)$ we denote this polynomial in ${\Bbb Z}[t]$. We first complete the proof of Theorem~\ref{non-eq}.

\begin{proof}[Proof of Theorem~\ref{non-eq}]
It suffices to show that $g(t)=(-1)^{kn}\prod_{l=0}^{k-1}(t-l)$.
For $0\leq l<n$, choose $M$ as a set consisting of $l$ points. Then $M$ is a 0-dimensional manifold  if $0< l<k$, and  a empty set (or $-1$-dimensional manifold) if $l=0$. Thus  $$\chi(M)=\begin{cases}
l & \text{ if } 0<l<k\\
0 & \text{ if } l=0.
\end{cases}$$
By using the pigeonhole principle, since $l<k$, we see that $$F(M,k)=\{(x_1,\cdots,x_k)\in M^{\times k}|x_i\neq
x_j \text{ for } i\neq j\}$$ must be empty, so $\chi(F(M,k))=0$. This implies that $g(l)=0$ for $0\leq l<k$, and thus each $l$ is a root of $g(t)$. Furthermore, we can write $g(t)=(-1)^{kn}c\prod_{l=0}^{k-1}(t-l)$ where $c$ is a constant number. Since $\mathcal{C}_{(1,..., 1)}=1$, we conclude that $c$ must be 1. This completes the proof.
\end{proof}

\begin{cor}\label{cor}
$$\mathcal{C}_{(k)}=(-1)^{k-1}(k-1)!.$$
\end{cor}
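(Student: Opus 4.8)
The plan is to extract $\mathcal{C}_{(k)}$ directly from the identity proved in Theorem~\ref{non-eq}, namely $g(t) = (-1)^{kn}\prod_{l=0}^{k-1}(t-l)$, where $g(t) = (-1)^{kn}\sum_{s=1}^{k}\big(\sum_{I=(k_1,\dots,k_s)}\mathcal{C}_I\big)\,t^s$. The partition $I=(k)$ is the unique partition of $k$ with $s=1$, so it is the only term contributing to the coefficient of $t^1$ in the sum $\sum_{I}\mathcal{C}_I t^s$. Hence $\mathcal{C}_{(k)}$ equals $(-1)^{kn}$ times the coefficient of $t^1$ in $g(t)$, i.e.\ the coefficient of $t$ in $\prod_{l=0}^{k-1}(t-l)$.

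First I would observe that $\prod_{l=0}^{k-1}(t-l) = t\prod_{l=1}^{k-1}(t-l)$, so the coefficient of $t^1$ in this product is the constant term of $\prod_{l=1}^{k-1}(t-l)$, which is $\prod_{l=1}^{k-1}(-l) = (-1)^{k-1}(k-1)!$. Therefore $\mathcal{C}_{(k)} = (-1)^{kn}\cdot(-1)^{k-1}(k-1)!$. The apparent dependence on $n$ is spurious: since $\mathcal{C}_{(k)}$ is defined purely combinatorially (as $\sum_{\Gamma\in\mathbf{VF}(\mathcal{K}_k),\,k(\Gamma)=(k)}(-1)^{|E(\Gamma)|}$, independent of any manifold or polytope), the factor $(-1)^{kn}$ must in fact disappear. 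Indeed the identity $g(t)=(-1)^{kn}\prod_{l=0}^{k-1}(t-l)$ already has the $(-1)^{kn}$ on both sides, so matching coefficients of $t^s$ gives $\sum_{I=(k_1,\dots,k_s)}\mathcal{C}_I$ equal to the coefficient of $t^s$ in $\prod_{l=0}^{k-1}(t-l)$, with no sign ambiguity; specializing to $s=1$ yields $\mathcal{C}_{(k)} = (-1)^{k-1}(k-1)!$ as claimed.

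Alternatively, one can give a self-contained combinatorial derivation: $\mathcal{C}_{(k)} = \sum_{\Gamma}(-1)^{|E(\Gamma)|}$ summed over all connected vertex-full subgraphs $\Gamma$ of $\mathcal{K}_k$ on vertex set $[k]$, which is a standard signed count of connected graphs. This alternating sum over connected labelled graphs on $k$ vertices is classically known to equal $(-1)^{k-1}(k-1)!$ (it is, up to sign, the value computed via the exponential formula relating $\sum_{\text{all graphs}}(-1)^{|E|}x^{|E|}$-type generating functions to their connected counterparts). I would regard the coefficient-matching argument above as the cleaner route, since all the needed machinery is already in place. The only mild obstacle is bookkeeping the cancellation of the $(-1)^{kn}$ factor and being careful that $I=(k)$ is genuinely the sole partition with one part; both are immediate, so the proof is short.
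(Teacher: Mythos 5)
Your proposal is correct and is essentially the paper's own argument: the paper also derives $\mathcal{C}_{(k)}$ by comparing the coefficient of $\chi(M)$ (your $t$) on both sides of the identity $\sum_{s=1}^k\big(\sum_{I=(k_1,\dots,k_s)}\mathcal{C}_I\big)\chi(M)^s=\prod_{l=0}^{k-1}(\chi(M)-l)$ coming from Theorem~\ref{non-eq}, noting that $I=(k)$ is the only partition with $s=1$. Your observation that the $(-1)^{kn}$ factors cancel, and the alternative connected-graph count, are fine but not needed beyond this.
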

\begin{proof}
This can be obtained by comparing the coefficients of $\chi(M)$ on both sides of the following equality
$$\sum_{s=1}^k(\sum_{I=(k_1, ..., k_s)}C_I)\chi(M)^s=\prod_{l=0}^{k-1}(\chi(M)-l).$$
\end{proof}

Finally, to complete the proof of Theorem~\ref{main thm}, it remains to determine the number $\mathcal{C}_I$ for every partition $I$ of $k$.

\begin{prop}
Let $I=(k_1,..., k_s)$ be a partition of $k$. Then
$$\mathcal{C}_I=\frac{k!(-1)^{k-s}}{r_1!r_2!\cdots r_s!k_1k_2\cdots k_s}$$
where $r_l$ denotes the number of times  that $k_l$ appears in $I$.
\end{prop}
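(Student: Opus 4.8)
The plan is to organize the defining sum
$\mathcal{C}_I=\sum_{\Gamma\in{\bf VF}(\mathcal{K}_k),\,k(\Gamma)=I}(-1)^{|E(\Gamma)|}$
according to the set partition of $[k]$ cut out by the connected components of a vertex-full subgraph, and then evaluate the contribution of a single block using Corollary~\ref{cor}. The key observation is that giving a vertex-full subgraph $\Gamma$ of $\mathcal{K}_k$ with $k(\Gamma)=I$ is the same as giving (i) an unordered partition $\{a_1,\dots,a_s\}$ of $[k]$ whose block sizes form the multiset $\{k_1,\dots,k_s\}$, together with (ii) for each block $a_l$ a connected graph $G_l$ on the vertex set $a_l$; and under this correspondence $|E(\Gamma)|=\sum_{l=1}^{s}|E(G_l)|$, so that the weight $(-1)^{|E(\Gamma)|}$ is multiplicative over the blocks.

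Consequently, summing first over the choices in (ii) with the set partition fixed and then over all such set partitions yields
$$\mathcal{C}_I=\sum_{\{a_1,\dots,a_s\}}\ \prod_{l=1}^{s}\Big(\sum_{G\ \text{connected on}\ a_l}(-1)^{|E(G)|}\Big).$$
The inner sum depends only on the cardinality $|a_l|$; denoting it by $c_m$ when $|a_l|=m$, I note that $c_m=\mathcal{C}_{(m)}$, because a vertex-full subgraph $\Gamma$ of $\mathcal{K}_m$ satisfies $k(\Gamma)=(m)$ exactly when it is connected and spanning. By Corollary~\ref{cor}, $c_m=\mathcal{C}_{(m)}=(-1)^{m-1}(m-1)!$. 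Finally, by Lemma~\ref{coe} the number of unordered partitions of $[k]$ with block sizes $k_1,\dots,k_s$ is $k!/(k_1!\cdots k_s!\,r_1!\cdots r_s!)$, so
$$\mathcal{C}_I=\frac{k!}{k_1!\cdots k_s!\,r_1!\cdots r_s!}\prod_{l=1}^{s}(-1)^{k_l-1}(k_l-1)!.$$
Since $(k_l-1)!/k_l!=1/k_l$ and $\sum_{l=1}^{s}(k_l-1)=k-s$, the right-hand side collapses to $k!(-1)^{k-s}/(r_1!\cdots r_s!\,k_1\cdots k_s)$, which is the asserted formula.

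The only delicate point is the structural claim in the first paragraph — that the weight $(-1)^{|E(\Gamma)|}$ factors through the connected components and hence the sum over $\{\Gamma : k(\Gamma)=I\}$ splits as a sum over set partitions times a product of independent per-block sums; everything afterward is bookkeeping already encoded in Lemma~\ref{coe} and Corollary~\ref{cor}. As an alternative that sidesteps Corollary~\ref{cor}, one can invoke the exponential formula directly: since a graph on $n$ labelled vertices is an arbitrary subset of the $\binom{n}{2}$ possible edges, $\sum_{G\ \text{on}\ [n]}(-1)^{|E(G)|}=(1-1)^{\binom{n}{2}}$ equals $1$ for $n\le1$ and $0$ for $n\ge2$; hence the exponential generating function of the weighted connected counts is $\log(1+x)=\sum_{m\ge1}(-1)^{m-1}(m-1)!\,x^m/m!$, giving $c_m=(-1)^{m-1}(m-1)!$, and expanding $\exp\big(\sum_{m}c_m y_m x^m/m!\big)$ with a variable $y_m$ marking components of size $m$ produces the formula for $\mathcal{C}_I$ in one step.
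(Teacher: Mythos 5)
Your argument is correct and is essentially the paper's own proof: both group the subgraphs $\Gamma$ with $k(\Gamma)=I$ by the underlying set partition of $[k]$ (counted via Lemma~\ref{coe}), use that $(-1)^{|E(\Gamma)|}$ factors over connected components so the sum splits into independent per-block sums, and evaluate each block sum as $\mathcal{C}_{(k_l)}=(-1)^{k_l-1}(k_l-1)!$ by Corollary~\ref{cor}, then simplify. Your closing observation that the per-block value can instead be derived combinatorially from $\sum_{G}(-1)^{|E(G)|}=(1-1)^{\binom{n}{2}}$ together with the exponential formula is a valid alternative to Corollary~\ref{cor} (whose proof in the paper passes through the topological Theorem~\ref{non-eq}), but it does not alter the main route.
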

\begin{proof}
Let $\mathcal{A}_I$ denote the set of those vertex-full subgraphs $\Gamma$ with $k(\Gamma)=I$ of ${\bf VF}(\mathcal{K}_k)$, all of which are not combinatorially equivalent to each other.
 Then we have that
\begin{align*}
\mathcal{C}_I=&\sum_{\Gamma\in {\bf VF}(\mathcal{K}_k)\
\atop k(\Gamma)=I}(-1)^{|E(\Gamma)|}\\
=& {{k!}\over {k_1!\cdots k_s!r_1!\cdots r_s!}}\sum_{\Gamma=\coprod_{l=1}^s\Gamma_l\in \mathcal{A}_I\
\atop |V(\Gamma_l)|=k_l}\prod_{l=1}^s(-1)^{|E(\Gamma_l)|} \text{ (by Lemma~\ref{coe})}\\
=&  {{k!}\over {k_1!\cdots k_s!r_1!\cdots r_s!}}\prod_{l=1}^s\sum_{\Gamma_l}(-1)^{|E(\Gamma_l)|}\\
=&  {{k!}\over {k_1!\cdots k_s!r_1!\cdots r_s!}}\prod_{l=1}^s \mathcal{C}_{(k_l)}\\
=&  {{k!}\over {k_1!\cdots k_s!r_1!\cdots r_s!}}\prod_{l=1}^s (-1)^{k_l-1}(k_l-1)! \text{ (by Corollary~\ref{cor})}\\
=& \frac{k!(-1)^{k-s}}{r_1!r_2!\cdots r_s!k_1k_2\cdots k_s}
\end{align*}
as desired.
\end{proof}

\begin{example}
By the formula of Theorem~\ref{main thm},  we have that
$$\chi(F_{{\Bbb Z}_2^n}(M, 2))=\big({\bf h}_P(-1)\big)^2-{\bf h}_P(-3)$$
and
$$\chi(F_{{\Bbb Z}_2^n}(M, 3))=(-1)^{3n}\Big(\big({\bf h}_P(-1)\big)^3-3{\bf h}_P(-1){\bf h}_P(-3)+2{\bf h}_P(-7)\Big)$$
where $M$ is a small cover over $P$.
\end{example}

\begin{cor}\label{real ma}
Given a simple convex $n$-polytope $P$ with $m$ facets, assume that there exists a small cover over $P$. Then
$$\chi(F_{{\Bbb Z}_2^m}(\mathcal{Z}_{P,1}, k))=(-1)^{kn}2^{(m-n)k}\sum\limits_{I=(k_1,..., k_s)}\frac{k!(-1)^{k-s}}{r_1!r_2!\cdots
r_s!k_1k_2\cdots k_s}\prod\limits_{l=1}^s{\bf h}_P(1-2^{k_l})$$
where $I=(k_1,..., k_s)$ runs over all partitions of $k$, and $r_l$ is the number of times that $k_l$ appears in $I$.
\end{cor}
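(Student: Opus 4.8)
The plan is to deduce Corollary~\ref{real ma} directly from Theorem~\ref{main thm} (the small-cover, $d=1$ case) together with the fibration $\overline{\rho}_1^{\times k}: F_{{\Bbb Z}_2^m}(\mathcal{Z}_{P,1}, k)\longrightarrow F_{{\Bbb Z}_2^n}(M, k)$ with fiber $({\Bbb Z}_2^{m-n})^{\times k}$, which was established in the discussion preceding Proposition~2.8. Since ${\Bbb Z}_2^{m-n}$ is a finite set of $2^{m-n}$ points, this fibration is in fact a covering map of degree $2^{(m-n)k}$; hence $F_{{\Bbb Z}_2^m}(\mathcal{Z}_{P,1}, k)$ is homeomorphic to a disjoint union of $2^{(m-n)k}$ copies of $F_{{\Bbb Z}_2^n}(M, k)$ (note a covering over a possibly disconnected base need not be globally trivial, but it is always a $2^{(m-n)k}$-fold cover, so the Euler characteristic multiplies regardless).

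First I would record that Euler characteristic is multiplicative under finite covers: if $E\to B$ is a $c$-fold covering then $\chi(E)=c\cdot\chi(B)$. Applying this with $B=F_{{\Bbb Z}_2^n}(M,k)$ and $c=2^{(m-n)k}$ gives
$$\chi(F_{{\Bbb Z}_2^m}(\mathcal{Z}_{P,1}, k))=2^{(m-n)k}\,\chi(F_{{\Bbb Z}_2^n}(M, k)).$$
Then I would substitute the formula for $\chi(F_{{\Bbb Z}_2^n}(M,k))$ from the $d=1$ case of Theorem~\ref{main thm}, namely $(-1)^{kn}\sum_{I=(k_1,\dots,k_s)}\mathcal{C}_I\prod_{l=1}^s{\bf h}_P(1-2^{k_l})$ with $\mathcal{C}_I=\frac{k!(-1)^{k-s}}{r_1!\cdots r_s!\,k_1\cdots k_s}$, and the stated identity falls out immediately.

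One point that needs a word of care: Theorem~\ref{main thm} presupposes the existence of a small cover $M$ over $P$, which is exactly the standing hypothesis of the corollary (and it is precisely this hypothesis that guarantees the characteristic function $\lambda_1$, hence the surjection $\widetilde{\lambda_1}$ and the free $H={\Bbb Z}_2^{m-n}$-action on $\mathcal{Z}_{P,1}$ with quotient $M$, which is what makes $\overline{\rho}_1^{\times k}$ a covering in the first place). I would also note that the exponent $n$ in the sign $(-1)^{kn}$ is the dimension of the polytope $P$, which is the correct one since the small cover $M$ has real dimension $n$; the moment-angle manifold $\mathcal{Z}_{P,1}$ has dimension $m$, but its contribution to the sign is already absorbed into the covering argument because the cover is orientation-irrelevant for Euler characteristic.

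\textbf{The main obstacle} is essentially bookkeeping rather than conceptual: one must be confident that $\overline{\rho}_1^{\times k}$ is genuinely a covering of degree exactly $2^{(m-n)k}$ over each component of $F_{{\Bbb Z}_2^n}(M,k)$, not merely a fibre bundle with discrete fibre that could a priori have varying or infinite fibre cardinality. This follows because $\overline{\rho}_1^{\times k}$ is the pullback of $\rho_1^{\times k}:\mathcal{Z}_{P,1}^{\times k}\to M^{\times k}$ along $F_{{\Bbb Z}_2^n}(M,k)\hookrightarrow M^{\times k}$, and $\rho_1$ is the quotient by the free finite group $H$, so $\rho_1^{\times k}$ is the quotient by $H^{\times k}$ of order $2^{(m-n)k}$; pullbacks of covering maps are covering maps of the same degree. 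Once this is in hand, the rest is a one-line substitution.
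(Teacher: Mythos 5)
Your proposal is correct and follows essentially the same route as the paper: the paper also observes that $F_{{\Bbb Z}_2^m}(\mathcal{Z}_{P,1},k)$ is a principal $({\Bbb Z}_2^{m-n})^{\times k}$-bundle (equivalently, a $2^{(m-n)k}$-fold cover) over $F_{{\Bbb Z}_2^n}(M,k)$, invokes multiplicativity of the Euler characteristic for such bundles (citing Allday--Puppe where you argue it via covering-space theory), and then substitutes the $d=1$ formula of Theorem~\ref{main thm}.
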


\begin{proof}
Let $M$ be a small cover over $P$. Then $F_{{\Bbb Z}_2^m}(\mathcal{Z}_{P,1}, k)$ is a principal $({\Bbb Z}_2^{m-n})^{\times k}$-bundle over
$F_{{\Bbb Z}_2^n}(M,k)$. By \cite[p. 86, (1.5)(d)]{ap} we have that
$$\chi(F_{{\Bbb Z}_2^m}(\mathcal{Z}_{P,1}, k))=|({\Bbb Z}_2^{m-n})^{\times k}|\chi(F_{{\Bbb Z}_2^n}(M,k)).$$
Moreover, the required result follows from Theorem~\ref{main thm}.
\end{proof}

\begin{rem}
 It should be interesting to give an explicit formula of  $\chi(F_{{\Bbb Z}_2^m}(\mathcal{Z}_{P,1}, k))$ without the existence assumption of a small cover over $P$ in Corollary~\ref{real ma}.
\end{rem}

\begin{prop}
Let $P$ be a simple convex polytope with $m$ facets. Then $$\chi(F_{T^m}(\mathcal{Z}_{P,2}, k))=0.$$
\end{prop}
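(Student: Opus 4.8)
The plan is to run, for the moment-angle manifold $\mathcal{Z}_{P,2}$ viewed as a $T^m$-manifold over $P$, exactly the argument that proves Theorem~\ref{main-1 thm} in the case $d=2$; the only new ingredient is one numerical fact about the fibres of the orbit map. Write $p_2\colon\mathcal{Z}_{P,2}\to P$. By the very construction $\mathcal{Z}_{P,2}=P\times T^m/\sim_{\theta_2}$, over the relative interior of a face $F$ of dimension $l$ the relation $\sim_{\theta_2}$ collapses the constant subtorus $G_F\cong T^{n-l}$ (the coordinate subtorus on the $n-l$ facets containing $F$), so that $p_2^{-1}(\mathrm{int}\,F)\cong\mathrm{int}\,F\times T^{m-n+l}$. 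The point is that a simple convex $n$-polytope has $m\ge n+1$ facets, hence $m-n+l\ge 1$ for every face $F$; in contrast with a quasi-toric manifold, even the fibre over a vertex carries a positive-dimensional torus factor.

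First I would establish the analogue of Lemma~\ref{part}: for every $\ell\ge 1$,
\[
\chi\bigl((p_2^{\times\ell})^{-1}(\Delta(P^{\times\ell}))\bigr)=0 .
\]
The proof is the cell count of Lemma~\ref{part}: for a face $F$ of dimension $l$ one has $(p_2^{\times\ell})^{-1}(\Delta((\mathrm{int}\,F)^{\times\ell}))\cong T^{(m-n+l)\ell}\times\mathrm{int}\,F$, and with the minimal cell structure on each circle the faces of dimension $l$ contribute cells of dimension $l+j$ in numbers proportional to $\binom{(m-n+l)\ell}{j}$, $0\le j\le(m-n+l)\ell$; since $(m-n+l)\ell\ge 1$ we get $\sum_j(-1)^j\binom{(m-n+l)\ell}{j}=(1-1)^{(m-n+l)\ell}=0$, so each face contributes $0$. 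Taking $\ell=1$ recovers $\chi(\mathcal{Z}_{P,2})=0$.

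Then I would feed this into the bookkeeping of Theorem~\ref{main-1 thm}. From the pullback description of $F_{T^m}(\mathcal{Z}_{P,2},k)$ recorded in Section~\ref{defn}, the inclusion--exclusion principle (Proposition~\ref{in-ex}) together with Lemma~\ref{product} writes $\chi\bigl((p_2^{\times k})^{-1}(\widetilde{\Delta}(P^{\times k}))\bigr)$ as an alternating sum over vertex-full subgraphs $\Gamma=\coprod_l\Gamma_l$ of $\prod_l\chi\bigl((p_2^{\times|V(\Gamma_l)|})^{-1}(\Delta(P^{\times|V(\Gamma_l)|}))\bigr)$; every factor is $0$ by the previous step (including the factors $\chi(\mathcal{Z}_{P,2})$ coming from isolated vertices), so the whole sum is $0$, and likewise $\chi(\mathcal{Z}_{P,2}^{\times k})=\chi(\mathcal{Z}_{P,2})^k=0$. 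Lefschetz duality for the closed manifold $\mathcal{Z}_{P,2}^{\times k}$ of dimension $k(m+n)$ and the subpolyhedron $(p_2^{\times k})^{-1}(\widetilde{\Delta}(P^{\times k}))$, exactly as in Theorem~\ref{main-1 thm}, then gives
\[
\chi(F_{T^m}(\mathcal{Z}_{P,2},k))=(-1)^{k(m+n)}\Bigl(\chi(\mathcal{Z}_{P,2}^{\times k})-\chi\bigl((p_2^{\times k})^{-1}(\widetilde{\Delta}(P^{\times k}))\bigr)\Bigr)=0 .
\]
Equivalently this is the $d=2$ formula of Theorem~\ref{main thm} with $\chi={\bf h}_P(1)$ replaced by $0$, and since every partition of $k$ has at least one part, every summand $\mathcal{C}_I\cdot 0^s$ vanishes; in the same vein one may note $\chi(F_{T^m}(\mathcal{Z}_{P,2},k))=\chi(F(\mathcal{Z}_{P,2},k))=(-1)^{k(m+n)}k!\binom{0}{k}=0$ via Theorem~\ref{non-eq1}.

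The only steps that are not purely formal are checking that the inputs of the proof of Theorem~\ref{main-1 thm} carry over to the $T^m$-manifold $\mathcal{Z}_{P,2}$: the local product structure of $p_2$ over face interiors (immediate from $\mathcal{Z}_{P,2}=P\times T^m/\sim_{\theta_2}$), the CW-subcomplex structure of the preimages of the diagonals, and the applicability of Lefschetz duality to $\mathcal{Z}_{P,2}^{\times k}$; none of these is a real obstacle, being already implicit in Section~\ref{defn}. Conceptually the vanishing has a one-line explanation that could serve as an alternative proof: because $m>n$, every isotropy subgroup $G_F$ is a \emph{proper} coordinate subtorus of $T^m$, so the diagonal circle $\{(z,\dots,z)\}\subset T^m$ meets each $G_F$ only in the identity and hence acts freely on $\mathcal{Z}_{P,2}$, on each $(p_2^{\times\ell})^{-1}(\Delta(P^{\times\ell}))$, and on $F_{T^m}(\mathcal{Z}_{P,2},k)$ — and a manifold carrying a free circle action has vanishing Euler characteristic.
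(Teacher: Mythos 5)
Your argument is correct, but it is not the route the paper takes. The paper's proof is a two-line orbit-theoretic argument: by \cite[Proposition~7.29]{bp} the diagonal circle subgroup of $T^m$ acts freely on $\mathcal{Z}_{P,2}$, hence the diagonal circle in $(T^m)^{\times k}$ acts freely on $F_{T^m}(\mathcal{Z}_{P,2},k)$, which therefore carries a principal $S^1$-bundle structure and has $\chi=0$ by \cite[p.~86, (1.5)(c)]{ap} --- this is precisely the ``one-line alternative'' you append at the end (your verification that the diagonal circle meets every coordinate isotropy torus $G_F$ trivially because $m>n$ is in effect a proof of the quoted freeness). Your main argument instead re-runs the combinatorial machinery of Theorem~\ref{main-1 thm} for the $T^m$-manifold $p_2\colon\mathcal{Z}_{P,2}\to P$: the key observation that every fibre, even over a vertex, is a torus $T^{m-n+l}$ of positive dimension (since a simple $n$-polytope has $m\ge n+1$ facets) makes every diagonal preimage $(p_2^{\times\ell})^{-1}(\Delta(P^{\times\ell}))$ have vanishing Euler characteristic via the same binomial cell count as Lemma~\ref{part}, and then inclusion--exclusion over vertex-full subgraphs plus Lefschetz duality for the closed $(m+n)k$-manifold $\mathcal{Z}_{P,2}^{\times k}$ kills everything term by term. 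Both are sound; what each buys is different. Your computational route is self-contained relative to Section~\ref{euler} (no appeal to \cite{bp} or \cite{ap}), yields the stronger intermediate statements $\chi(\mathcal{Z}_{P,2})=0$ and $\chi\bigl((p_2^{\times\ell})^{-1}(\Delta(P^{\times\ell}))\bigr)=0$ for all $\ell$, and fits the pattern used for small covers, whereas the paper's free-circle argument is shorter, avoids any cell-count or duality bookkeeping, and transfers verbatim to moment-angle complexes $\mathcal{Z}_{K,2}$ over arbitrary simplicial complexes (as the paper remarks afterwards), where the polytopal face/Lefschetz-duality setup you rely on is no longer available.
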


\begin{proof}
We know from \cite[Proposition 7.29]{bp} that the diagonal circle subgroup of $T^m$ acts freely on $\mathcal{Z}_{P,2}$, so the diagonal circle subgroup of $(T^m)^{\times k}$ also acts freely on $F_{T^m}(\mathcal{Z}_{P,2}, k)$. Therefore, $F_{T^m}(\mathcal{Z}_{P,2}, k)$ admits a principal $S^1$-bundle structure, which induces that
$\chi(F_{T^m}(\mathcal{Z}_{P,2}, k))=0$ by \cite[p. 86, (1.5)(c)]{ap}.
\end{proof}

\begin{rem}
Buchstaber and Panov \cite{bp} expanded the construction of $\mathcal{Z}_{P,d}$ over a simple convex polytope $P$ to the case of general simplicial complex $K$. The resulting space denoted by $\mathcal{Z}_{K,d}$ is not a manifold in general, called the (real) moment-angle complex. When $d=2$,  $\mathcal{Z}_{K, 2}$ still admits a principal $S^1$-bundle structure, so the Euler characteristic of its orbit configuration space is zero. It should be also interesting  to give an explicit formula for the Euler characteristic of the orbit configuration space of $\mathcal{Z}_{K, 1}$ in terms of the combinatorial data of $K$. In addition,
it was showed in \cite{cl, u} that the Halperin-Carlsson conjecture holds for $\mathcal{Z}_{K,d}$ with the restriction free action. Naturally, we wish to know whether this is also true for the orbit configuration space of $\mathcal{Z}_{K, d}$.
\end{rem}

\section{Homotopy type of $F_{G_d^n}(M,k)$ for $n=1$ or $k=2$}\label{homotopy type}

Throughout the following, assume that $\pi_d: M\longrightarrow P$ is a $dn$-dimensional $G_d^n$-manifold over a simple convex $n$-polytope $P$.
    We see easily that $F(P, k)$ is disconnected if $n=1$ and path-connected if $n>1$ since $\Delta(P^{\times 2})$ is combinatorially equivalent to $P$.

\subsection{Homotopy type of $F_{G_d^1}(M, k)$}
\begin{thm}\label{theorem4.1}
Let $\pi_d: M\longrightarrow P$ be a $d$-dimensional $G_d^1$-manifold over $P$. Then, when $d=1$,  $F_{\mathbb Z_2}(M,k)$
has the same  homotopy type as  $k!2^{k-2}$ points, and when $d=2$, $F_{S^1}(M, k)$ has the same homotopy type as a disjoint union of $k!$ copies of $T^{k-2}$.
\end{thm}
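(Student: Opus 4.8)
The plan is to analyze the case $n=1$ explicitly, where $P$ is a simple convex $1$-polytope, i.e., a closed interval $[0,1]$ with two facets (the endpoints $\{0\}$ and $\{1\}$). I would first pin down what the $G_d^1$-manifold $M$ over $P$ is: the characteristic function $\lambda_d$ must send each endpoint to a generator of $R_d$, so the reconstruction $P\times G_d^1/\sim_{\lambda_d}$ identifies $[0,1]\times\mathbb{Z}_2$ (for $d=1$) along the two endpoints, giving $M=S^1$; and $[0,1]\times S^1/\sim_{\lambda_2}$ collapses the circle over each endpoint to a point, giving $M=S^2$. So $F_{\mathbb{Z}_2}(M,k)=F_{\mathbb{Z}_2}(S^1,k)$ with the antipodal-type $\mathbb{Z}_2$-action, and $F_{S^1}(M,k)=F_{S^1}(S^2,k)$ with the rotation action.

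\textbf{Step 1: Reduce to the base via the orbit map.} By the commutative pullback diagram in Section~\ref{defn}, $\overline{\pi}_d^{\times k}\colon F_{G_d^1}(M,k)\to F(P,k)$ is a locally trivial bundle with structure group $G_d^1$ acting coordinatewise, and fiber $(G_d^1)^{\times k}$ over the open part of the base but with degenerations over the boundary strata where some $p_i$ lies at an endpoint of $[0,1]$. I would instead work directly with the combinatorial picture of $F([0,1],k)$: this space is the complement in $[0,1]^{\times k}$ of the weak diagonal, so it is a disjoint union of open pieces indexed by the linear orders on $k$ points. Concretely $F([0,1],k)$ deformation retracts onto the set of $k!$ isolated "ordered configurations" $\{(t_{\sigma(1)}<\cdots<t_{\sigma(k)})\}$ — but I must be careful, since the closure of each such chamber touches the endpoints, and the bundle is not trivial there.

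\textbf{Step 2: Compute one chamber.} I would fix one ordering, say $x_1<x_2<\cdots<x_k$ on $[0,1]$, and identify the corresponding piece $Y_\sigma\subset F_{G_d^1}(M,k)$ of the orbit configuration space lying over it. The key observation is that $x_1$ can be slid to the endpoint $\{0\}$ and $x_k$ to $\{1\}$, and over those endpoints the orbit $G_d^1(x_i)$ collapses: for $d=1$ the two points of $\mathbb{Z}_2$ over an endpoint become a single point of $S^1$, and for $d=2$ the circle $S^1\cdot x_i$ over an endpoint collapses to a point of $S^2$. Thus the $G_d^1$-coordinates of the two extreme points are killed after this retraction, while the $k-2$ interior points each retain a free $G_d^1$-coordinate. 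So $Y_\sigma$ deformation retracts onto $(G_d^1)^{\times(k-2)}$, which for $d=1$ is $2^{k-2}$ points and for $d=2$ is $T^{k-2}$. Summing over the $k!$ orderings and checking that these chambers are the connected components (since $F(P,k)$ is disconnected with $k!$ components when $n=1$, and $\overline{\pi}_d^{\times k}$ has connected fibers over each component after the retraction) gives the claimed homotopy type. One should also verify the retraction can be performed $G_d^1$-equivariantly and compatibly across the chamber, which is where I would spend the most care.

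\textbf{Main obstacle.} The delicate point is the behavior at the boundary: one must show that pushing a configuration so that its leftmost and rightmost orbit-points sit over the endpoints of $[0,1]$ is a genuine deformation retraction of $F_{G_d^1}(M,k)$ (not merely of the open bundle part), and that it kills exactly the two extreme group-coordinates while leaving the middle $k-2$ free and untangled. Equivalently, I need the local model near an endpoint: $\pi_d^{-1}$ of a small half-neighborhood of an endpoint is $D^d$ (a half-interval times $G_d^1$, collapsed), so pushing toward the endpoint is a retraction of that piece onto the single point over the vertex; threading this through all $k$ coordinates simultaneously, while keeping the orbits of distinct points disjoint throughout the homotopy, is the technical heart. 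Once that local retraction is set up, the global statement follows by taking products over the $k$ ordered coordinates and a disjoint union over the $k!$ chambers, with Theorem~\ref{main-1 thm} (or Theorem~\ref{non-eq}) available as a consistency check on the number of components: $\chi(F_{\mathbb{Z}_2}(S^1,k))=k!2^{k-2}$ and $\chi(F_{S^1}(S^2,k))=\chi(F(S^2,k))=0=k!\cdot\chi(T^{k-2})$ for $k\ge 2$.
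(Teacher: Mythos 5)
Your proposal follows essentially the same route as the paper's proof: identify $M$ as $S^1$ (reflection) or $S^2$ (rotation) over the $1$-simplex, retract each of the $k!$ ordering chambers of $F([0,1],k)$ so that the two extreme coordinates sit at the endpoints and the remaining $k-2$ stay in the interior (i.e.\ into the interior of a $(k-2)$-face of $P^{\times k}$), and then use that $(\pi_d^{\times k})^{-1}$ over such a face interior is the trivial bundle $G_d^{k-2}\times(\text{open ball})$, giving $2^{k-2}$ points or $T^{k-2}$ per chamber. The only blemish is in your side consistency check: $\chi(F_{S^1}(S^2,k))=0$ holds for $k\geq 3$, while for $k=2$ both sides equal $2$, not $0$.
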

\begin{proof}
   It is well-known that when $d=1$,  $M$ is  a circle $S^1$ with a reflection fixing two isolated points such that the orbit polytope $P$ is  a 1-dimensional simplex, and when $d=2$, $M$ is a 2-sphere $S^2$ with a rotation action of $S^1$, fixing two isolated points, such that the orbit polytope $P$ is also a 1-dimensional simplex. Since a 1-simplex is homeomorphic to the interval $[0,1]$,  we may identify $P^{\times k}$ with the cube $[0,1]^{\times k}$.
We then  see that each point $x=(x_1, ..., x_k)\in F(P,k)\subset [0,1]^{\times k}$ determines a unique permutation $(\sigma(1), ..., \sigma(k))$ of $[k]$ such that $x_i<x_j$ as long as $\sigma(i)<\sigma(j)$, where $\sigma\in {\bf S}_k$, and ${\bf S}_k$ is the symmetric group on $[k]$. In particular, all points on the path $\big((1-t)x_1+t\frac{\sigma(1)-1}{k-1}, ..., (1-t)x_k+t\frac{\sigma(k)-1}{k-1}\big), 0\leq t\leq 1$, are in $F(P,k)$,  and  they also determine the unique permutation $(\sigma(1), ..., \sigma(k))$ of $[k]$. If $x$ and $x'$ are two different points in $F(P,k)$ such that they determine two different permutations $\sigma, \sigma'$ in  ${\bf S}_k$, then there must be no path from $x$ to $x'$ in $F(P, k)$ since any path from $x$ to $x'$ in $P^{\times k}$ always passes the point of the form $(y_1, ..., y_k)$ with some $y_i=y_j$ for $i\not=j$.
 Define a homotopy $H:F(P,k)\times
[0,1]\rightarrow F(P,k)$ by
$$((x_1, .., x_k), t)\longmapsto \big((1-t)x_1+t\frac{\sigma(1)-1}{k-1}, ..., (1-t)x_k+t\frac{\sigma(k)-1}{k-1}\big).$$
Then we have that this homotopy $H$  is a deformation retraction of $F(P,k)$ onto
$k!$ points in $\mathcal{A}=\big\{(\frac{\sigma(1)-1}{k-1},...,\frac{\sigma(k)-1}{k-1})\big|\sigma\in {\bf S}_k\big\}\subset F(P,k)$. This means that $F(P,k)$ contains
$k!$ connected components $C_\sigma, \sigma\in {\bf S}_k$, each of which  may continuously collapse  to a point in $\mathcal{A}$. For each $\sigma$, since $0,1\in\{\frac{\sigma(1)-1}{k-1},...,\frac{\sigma(k)-1}{k-1}\}$ and $\{\frac{\sigma(1)-1}{k-1},...,\frac{\sigma(k)-1}{k-1}\}-\{0,1\}$ is in the open interval $(0,1)$,   $C_\sigma$ has also the deformation retract $R_\sigma$ that is homeomorphic to a $(k-2)$-dimensional open ball $B_\sigma$  in $F(P, k)$, and is contained in $\partial P^{\times k}$. Therefore, each $R_\sigma$ can be chosen in the interior of an $(k-2)$-face in
$P^{\times k}$, so by \cite[Lemma 4.1]{dj},
$$(\pi_d^{\times k})^{-1}(R_\sigma)=G_d^{k-2}\times R_\sigma=\begin{cases}
{\Bbb Z}_2^{k-2}\times R_\sigma & \text{ if } d=1\\
T^{k-2}\times R_\sigma & \text{ if } d=2
\end{cases}$$
which is homotopic to $2^{k-2}$ points if $d=1$ and $T^{k-2}$ if $d=2$. This completes the proof.
\end{proof}

\subsection{An equivariant strong deformation retract of $F_{G_d^n}(M,2)$ with $n>1$}\label{n=2}
Let $\mathcal{F}(P)$ denote the set of all faces of $P$.
\begin{lem}\label{defor re}
There is a strong deformation retraction $\Omega:F(P,2)\times
[0,1]\rightarrow F(P,2)$ of $F(P,2)$ onto
$$\mathcal{A}(P,2)=\bigcup_{F_1, F_2\in \mathcal{F}(P)\
\atop F_1\cap F_2=\emptyset}F_1\times F_2.$$
\end{lem}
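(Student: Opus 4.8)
The plan is to build the deformation retraction $\Omega$ by pushing each pair of distinct points $(p_1,p_2)\in F(P,2)$ toward the pair of minimal faces containing them, and then, when those minimal faces intersect, sliding $p_1$ and $p_2$ apart along a fixed auxiliary direction until their carrier faces become disjoint. The key observation is that for a simple convex polytope $P$, every point $p$ lies in the relative interior of a unique face $G(p)$ (its carrier), and $F_1\cap F_2=\emptyset$ holds for the carriers precisely when no vertex of $P$ lies in both, equivalently when the minimal faces through $p_1$ and $p_2$ are disjoint as subsets of $P$. So the target $\mathcal{A}(P,2)$ is exactly the set of pairs whose carriers are disjoint, and I must retract $F(P,2)$ — the pairs of genuinely distinct points — onto it.

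First I would fix a metric realization of $P$ (say $P\subset\R^n$) and, for each point $p\in P$, a canonical retraction of a neighborhood of $G(p)$ within $P$; concretely, using the cell structure from section~\ref{euler ch}, barycentric-type coordinates let one write down, for every face $F$, a deformation of the open star of $F$ onto $F$. The first stage of $\Omega$, on a time subinterval $[0,\tfrac12]$, moves $(p_1,p_2)$ linearly (inside $P\times P$, which is convex) toward $(\bar p_1,\bar p_2)$ where $\bar p_i$ is a chosen basepoint in the relative interior of $G(p_i)$ — for instance the barycenter. One checks this path stays in $F(P,2)$: since $p_1\neq p_2$ throughout the original configuration, and the straight-line homotopy only changes a coordinate within its own carrier face's relative interior, the two points never collide (if $G(p_1)\neq G(p_2)$ this is clear; if $G(p_1)=G(p_2)=F$ one slides within $\mathrm{relint}\,F$ keeping them distinct, which is possible as $\dim F\geq 1$ in that case). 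After this stage every configuration has both coordinates at barycenters of faces, so there are finitely many "stopped" configurations, one per ordered pair $(F_1,F_2)$ of faces with $F_1\cap F_2\neq\emptyset$ (the ones already in $\mathcal{A}(P,2)$ are fixed).

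The second stage, on $[\tfrac12,1]$, handles the finitely many pairs $(F_1,F_2)$ with $F_1\cap F_2\neq\emptyset$. Here I would use a linear functional $\xi$ on $\R^n$ that is non-constant on $P$ and in general position (injective on vertices), and push $\bar p_1$ in the $+\xi$ direction and $\bar p_2$ in the $-\xi$ direction along edge-paths toward the $\xi$-maximal and $\xi$-minimal vertices respectively — or, more cleanly, along the unique monotone path in the dual-graph sense — until the carriers are a vertex pair, hence disjoint, hence in $\mathcal{A}(P,2)$. Again one must verify the path avoids the weak diagonal: because $\xi(p_1(t))$ is strictly increasing and $\xi(p_2(t))$ strictly decreasing once they have separated in $\xi$-value, collisions are impossible after an initial instant; the only delicate moment is $t=\tfrac12$ itself when $\bar p_1$ and $\bar p_2$ might have equal $\xi$-value while lying in a common face, and there one first perturbs within that face to break the tie before moving outward. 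Continuity of the concatenation at $t=\tfrac12$ follows because the first-stage endpoint map $p\mapsto\bar p$ and the second-stage paths depend continuously on the combinatorial type, which is locally constant on $F(P,2)$ stratified by carrier pairs. That $\Omega$ is a \emph{strong} deformation retraction amounts to checking it fixes $\mathcal{A}(P,2)$ pointwise for all $t$, which is built into the construction (disjoint carriers are never moved).

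\textbf{The main obstacle} I anticipate is making the construction simultaneously continuous across the strata of $F(P,2)$ and honest about staying off the weak diagonal at the transition $t=\tfrac12$: the naive "slide to barycenter then slide apart" is discontinuous unless the auxiliary paths are chosen to depend continuously on the starting configuration, and the tie-breaking perturbation when two coordinates share a carrier face must be done in a canonical, continuous way. I expect the cleanest fix is to avoid a two-stage construction entirely and instead describe $\Omega$ as a single flow: at each point $(p_1,p_2)$ take the velocity field that decreases a suitable potential — e.g. (negative) distance between the carriers measured combinatorially plus a term driving each $p_i$ to its carrier's barycenter — and argue this flow is defined for all time, converges to $\mathcal{A}(P,2)$, and restricts to the identity there. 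Verifying the flow never reaches $\widetilde{\Delta}(P^{\times 2})$ is then the one genuine estimate, and it follows from the simplicity of $P$ (local standardness of the face structure) together with the strict monotonicity of $\xi$ along the separating directions.
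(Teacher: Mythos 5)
There is a genuine gap, and it is exactly the one you flag at the end but do not repair: your construction is not continuous, and no amount of tie-breaking at $t=\tfrac12$ fixes it, because the discontinuity already occurs in stage one. The carrier map $p\mapsto G(p)$, and hence the barycenter map $p\mapsto\bar p$, jumps whenever $p$ crosses into a lower-dimensional face: a point in $\mathrm{int}(P)$ arbitrarily close to a facet $F$ is sent to the barycenter of $P$, while a nearby point on $F$ is sent to the barycenter of $F$. So the straight-line homotopy $(1-t)p+t\bar p$ is not a homotopy of $F(P,2)$, and your claim that the combinatorial type ``is locally constant on $F(P,2)$ stratified by carrier pairs'' is false --- the carrier strata are not open, and lower strata lie in the closure of higher ones. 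The same problem kills the end map: configurations with intersecting carriers (e.g.\ both points in $\mathrm{int}(P)$) accumulate on $\mathcal{A}(P,2)$, yet your time-one map sends them to finitely many barycenter/vertex pairs while fixing $\mathcal{A}(P,2)$ pointwise, which is incompatible with continuity. There is also a well-definedness issue when $G(p_1)=G(p_2)$: both points cannot be sent to the same barycenter, and ``slide within $\mathrm{relint}\,F$ keeping them distinct'' specifies no map. Your proposed rescue --- a single flow decreasing a combinatorial potential --- is only a sketch; defining such a flow, proving it is continuous in the initial condition, stays off the weak diagonal, converges, and fixes $\mathcal{A}(P,2)$ is precisely the content of the lemma, so nothing has been proved.

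For comparison, the paper's argument avoids assigning each point a discontinuous target. It collapses $F(P,2)=P\times P-\Delta(P\times P)$ radially away from the diagonal onto $\partial(P\times P)-\Delta$, then onto $\partial P\times\partial P$ minus the diagonal, and then iterates the same ``push away from the diagonal'' inside each face product $F\times F'$ with $F\cap F'\neq\emptyset$, descending through a finite chain of faces $Q_1\times Q_1'\supset\cdots\supset Q_l\times Q_l'$ until $Q_l\cap Q_l'=\emptyset$. Each step is a continuous collapse that is the identity on the part of the target already reached, so the concatenation is a strong deformation retraction onto $\bigcup_{F_1\cap F_2=\emptyset}F_1\times F_2$. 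If you want to salvage your approach, you would have to replace ``go to the barycenter of the carrier'' by this kind of boundary-directed radial retraction, which moves points continuously toward lower faces instead of assigning them discontinuously to face-dependent basepoints.
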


\begin{proof}
First, we note that $F(P,2)=P\times P-\Delta(P\times P)$ is
path-connected since $n>1$ and $\dim \Delta(P\times P)=\dim P$.
Actually, $F(P,2)$ is homotopic to $S^{n-1}$ (see \cite{co1}). Since
$\Delta(P\times P)$ always contains interior points of $P\times P$,
we have that $F(P, 2)$ can continuously collapse onto
$\partial(P\times P)-\Delta(P\times P)$. Since $\partial(P\times
P)=\partial P\times P\cup P\times\partial P$, in a similar way as
above, we can obtain that both $\partial P\times P$ and
$P\times\partial P$ can further continuously collapse onto $\partial
P\times
\partial P-\Delta(\partial P\times \partial P)$. Now we see that $\partial P\times
\partial P-\Delta(\partial P\times \partial P)$ is the union of subsets of the following forms
$$F\times F-\Delta(F\times F), F\times F'-\Delta(\partial P\times \partial P), F\times F''$$
where $F, F', F''$ are facets of $P$ with  $F\cap F'\not=\emptyset$ and $F\cap F''=\emptyset$,
  and further we also see that $F\times F'-\Delta(\partial P\times\partial P)$ can be continuously shrunk to the union of subsets of the following forms
 $$(F\cap F')\times (F\cap F')-\Delta((F\cap F')^{\times 2}),  Q\times (F\cap F')-\Delta(\partial P\times\partial P),$$ $$ (F\cap F')\times Q'-\Delta(\partial P\times\partial P), Q_1\times Q'_1$$
 where $Q$ and $Q_1$ are facets of $F$, $Q'$ and $Q'_1$ are facets of  $F'$,  such that $Q\times (F\cap F')\not=\emptyset$, $(F\cap F')\times Q'\not=\emptyset$,  and $Q_1\cap Q'_1=\emptyset$.
 We continuous the above
process to  $F\times F-\Delta(F\times F)$, $(F\cap F')\times (F\cap
F')-\Delta((F\cap F')^{\times 2})$, $Q\times (F\cap
F')-\Delta(\partial P\times\partial P)$, $(F\cap F')\times
Q'-\Delta(\partial P\times\partial P)$,  and further repeat it
whenever possible.  This process must end  after a finite number of
steps, giving finally that $F(P,2)$ is homotopic to the union
$\bigcup_{F_1\times F_2\in \mathcal{F}(P)\ \atop F_1\cap
F_2=\emptyset}F_1\times F_2$. In particular, we also
see that this shrinking of $F(P,2)$ leaves all points of
$\bigcup_{F_1\times F_2\in \mathcal{F}(P)\ \atop F_1\cap
F_2=\emptyset}F_1\times F_2$ fixed, and for each face $Q\times Q'\subset F(P,2)$ of $P^{\times 2}$ with $Q\cap Q'\not=\emptyset$, there exists  a sequence of faces in $P\times P$
$$Q\times Q'\supseteq Q_1\times Q'_1\supset \cdots\supset Q_{l-1}\times Q'_{l-1}\supset Q_l\times Q'_l$$
with $Q_i\cap Q'_i\not=\emptyset$ for $i=1, ..., l-1$ and $Q_l\cap
Q'_l=\emptyset$.   This means that  there is a
strong deformation retraction $\Omega:F(P,2)\times [0,1]\rightarrow
F(P,2)$ of $F(P,2)$ onto $\bigcup_{F_1\times F_2\in \mathcal{F}(P)\
\atop F_1\cap F_2=\emptyset}F_1\times F_2$.
\end{proof}

\begin{rem}\label{hom-type}
Because $F(P, 2)$ has the homotopy type of $S^{n-1}$,
$\mathcal{A}(P,2)$ has also the same homotopy type as $S^{n-1}$.
\end{rem}

\begin{thm}\label{main re}
There is an equivariant strong deformation retraction of $F_{G_d^n}(M,2)$ onto
$$X_d(M)=\bigcup_{F_1, F_2\in \mathcal{F}(P)\
\atop F_1\cap F_2=\emptyset} (\pi_d^{-1})^{\times 2}(F_1\times F_2).$$
\end{thm}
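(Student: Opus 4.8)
The plan is to lift the strong deformation retraction $\Omega: F(P,2)\times[0,1]\to F(P,2)$ of Lemma~\ref{defor re} from the base $F(P,2)$ to the total space $F_{G_d^n}(M,2)$ in an equivariant way. Recall that $F_{G_d^n}(M,2)$ fibers over $F(P,2)$ via $\overline{\pi}_d^{\times 2}$ with orbit space $F(P,2)$, and that $M$ itself is reconstructed from $(P,\lambda_d)$ as the quotient $P\times G_d^n/\sim_{\lambda_d}$; consequently $M^{\times 2}$ is $P^{\times 2}\times(G_d^n)^{\times 2}/\sim$, and $F_{G_d^n}(M,2)$ is the corresponding quotient of $F(P,2)\times(G_d^n)^{\times 2}$. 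The key point is that the retraction $\Omega$ produced in Lemma~\ref{defor re} is built entirely out of face-preserving collapses: at every stage a face $Q\times Q'$ of $P^{\times 2}$ is pushed into a smaller face $Q_1\times Q'_1$, and $\Omega$ leaves $\mathcal{A}(P,2)=\bigcup_{F_1\cap F_2=\emptyset}F_1\times F_2$ pointwise fixed. Because $\Omega$ is built from maps that respect the face stratification of $P^{\times 2}$, it is compatible with the equivalence relation $\sim_{\lambda_d}$ defining $M^{\times 2}$: if two points of $P^{\times 2}$ lie in the relative interior of the same face, $\Omega$ moves them into the relative interiors of a common face at every time $t$, so the identification data $G_{F_1}\times G_{F_2}$ attached to the fibers vary consistently along the homotopy.

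Concretely, I would first fix, once and for all, the explicit stratified structure of $\Omega$: for each face $Q\times Q'$ of $P^{\times 2}$ contained in $F(P,2)$, record the descending chain $Q\times Q'\supseteq Q_1\times Q'_1\supset\cdots\supset Q_l\times Q'_l$ with $Q_l\cap Q'_l=\emptyset$ furnished by Lemma~\ref{defor re}, and observe that for a point $x$ in the relative interior of $Q\times Q'$, the whole track $t\mapsto\Omega(x,t)$ stays inside $\overline{Q\times Q'}$ and, for $t<1$, inside the relative interior of some face in that chain. Second, I would define $\widetilde\Omega: F_{G_d^n}(M,2)\times[0,1]\to F_{G_d^n}(M,2)$ on the model $F(P,2)\times(G_d^n)^{\times 2}/\sim_{\lambda_d}$ by
$$\widetilde\Omega\big([x,g],t\big)=\big[\Omega(x,t),\,g\big].$$
The content to check is that this is well defined: if $[x,g]=[y,h]$ with $x=y\in\text{int}(Q\times Q')$ and $g^{-1}h\in G_{Q}\times G_{Q'}$, then $\Omega(x,t)=\Omega(y,t)$ lies in the relative interior of some face $R\times R'$ with $Q\times Q'\supseteq R\times R'$, whence $G_Q\times G_{Q'}\subseteq G_R\times G_{R'}$, so $g^{-1}h$ still lies in the stabilizer subgroup of the new face and $[\Omega(x,t),g]=[\Omega(y,t),h]$. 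This is exactly the monotonicity of stabilizers under face inclusion, which is immediate from the definition of $G_F$ as the subgroup generated by the $\lambda_d$-images of the facets containing $F$.

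Third, I would verify the three properties of a strong deformation retraction. Continuity of $\widetilde\Omega$ follows from continuity of $\Omega$ together with the fact that the quotient maps $P^{\times 2}\times(G_d^n)^{\times 2}\to M^{\times 2}$ are closed (both spaces are compact Hausdorff, restricted to the relevant open subsets), so $\widetilde\Omega$ descends continuously. For $t=0$ we get the identity; for $t=1$ the image is $\{[\,\Omega(x,1),g\,]\}=\bigcup_{F_1\cap F_2=\emptyset}(\pi_d^{-1})^{\times 2}(F_1\times F_2)=X_d(M)$, using that $\Omega(\cdot,1)$ retracts onto $\mathcal{A}(P,2)$ and that $(\pi_d^{\times 2})^{-1}(\mathcal A(P,2))=X_d(M)$ by Lemma~\ref{part}-style identification of preimages of faces. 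That $X_d(M)$ is fixed pointwise for all $t$ is inherited directly from the corresponding property of $\Omega$ on $\mathcal A(P,2)$. Finally, equivariance: $(G_d^n)^{\times 2}$ acts on the model by left multiplication on the $g$-coordinate, and $\widetilde\Omega([x,g\cdot a],t)=[\Omega(x,t),g\cdot a]=a^{-1}\cdot\widetilde\Omega([x,g],t)$ — wait, more precisely the action commutes with $\widetilde\Omega$ because $\Omega$ does not touch the group coordinate, so $\widetilde\Omega(a\cdot[x,g],t)=a\cdot\widetilde\Omega([x,g],t)$, giving equivariance.

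The main obstacle I anticipate is the well-definedness/continuity bookkeeping in the quotient: one must be genuinely careful that $\Omega$ as constructed in Lemma~\ref{defor re} can be chosen to send relative interiors of faces of $P^{\times 2}$ into relative interiors of faces (rather than merely into closed faces), so that the stabilizer subgroup is constant on each short time-subinterval between successive collapses and only \emph{jumps down} (never up) as $t$ increases through the finitely many collapse times. Making this precise may require slightly refining the description of $\Omega$ — e.g. performing the collapses one face-dimension at a time and noting that each elementary collapse is a straight-line homotopy within a single face's barycentric-type coordinates — but no new idea beyond the stratified nature of the construction is needed. Once that refinement is in hand, the descent of $\Omega$ to $\widetilde\Omega$ and the verification of the strong-deformation-retraction axioms are formal.
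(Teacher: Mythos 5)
Your proposal is correct and follows essentially the same route as the paper: lift $\Omega$ from Lemma~\ref{defor re} to $\widetilde\Omega(a,g,t)=(\Omega(a,t),g)$ on $F(P,2)\times G_d^{2n}$ and check it descends through $\sim_{\lambda_d\times\lambda_d}$, using that the retraction tracks run through a decreasing chain of faces of $P^{\times 2}$ so the stabilizer subgroups $G_{Q}\times G_{Q'}$ only grow along the homotopy (this is exactly the paper's Claim A). The refinement you flag --- that $\Omega$ carries relative interiors of faces into relative interiors along the collapse chain --- is already built into the paper's construction in Lemma~\ref{defor re}, so no additional work is needed.
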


\begin{proof}
Since $M=P\times G_d^n/\sim_{\lambda_d}$, we have that
$F_{G_d^n}(M,2)=F(P, 2)\times
G_d^{2n}/\sim_{\lambda_d\times\lambda_d}$, and $X_d(M)=\mathcal{A}(P,2)\times
G_d^{2n}/\sim_{\lambda_d\times\lambda_d}$, so there is a natural inclusion $X_d(M)\hookrightarrow F_{G_d^n}(M,2)$.   For the strong
deformation retraction $\Omega:F(P,2)\times [0,1]\rightarrow F(P,2)$
in Lemma~\ref{defor re}, it may  be lifted naturally to a strong
deformation retraction $$\widetilde{\Omega}: F(P,2)\times
G_d^{2n}\times [0,1]\rightarrow F(P,2)\times G_d^{2n}$$ by mapping
$(a,g,t)$ to $(\Omega(a,t),g)$.
 Now assume that two points $(a,g)$ and $(a,g')$
of $F(P,2)\times G_d^{2n}$ satisfy
$(a,g)\sim_{\lambda_d\times\lambda_d}(a,g')$.

\vskip .1cm

\noindent {\bf Claim A.} $\widetilde{\Omega}(a, g,
t)=(\Omega(a,t),g)\sim_{\lambda_d\times\lambda_d} (\Omega(a,t),
g')=\widetilde{\Omega}(a, g',t)$.

\vskip .1cm

If $a\in \text{\rm int}(P^{\times 2})$, then,  by the construction
of $M$, $g=g'$. It follows that $\widetilde{\Omega}(a, g,
t)=\widetilde{\Omega}(a,g',t)$,  so
$(\Omega(a,t),g)\sim_{\lambda_d\times\lambda_d} (\Omega(a,t), g')$
regardless of whether $\Omega(a, t)$ belongs to  $\text{\rm
int}(P^{\times 2})$ or not.

If $a\in \partial(P^{\times 2})$, then $a$ belongs to $F\times P$ or $P\times F'$ where $F$ and  $F'$ are facets of $P$.
Without  loss of generality, we merely consider the case of $a\in F\times P$ in the following argument.
Let  $\sigma(t)=\Omega(a, t)$ be the path from $\Omega(a, 0)$ to $\Omega(a, 1)$. For such a path $\sigma$, we see  from the proof of Lemma~\ref{defor re} that there exists  a sequence of faces in $F\times P$
$$F\times P\supseteq Q_1\times Q'_1\supset \cdots\supset Q_{l-1}\times Q'_{l-1}\supset Q_l\times Q'_l$$
with $Q_i\cap Q'_i\not=\emptyset$ for $i=1, ..., l-1$ and $Q_l\cap
Q'_l=\emptyset$, such that $\sigma(t)$ continuously runs from
$\sigma(0)=\Omega(a,0)=a\in \text{\rm int}(Q_1\times Q'_1)$ to
$\sigma(1)=\Omega(a, 1)\in \text{\rm int}(Q_l\times Q'_l)$
   through
 $$\text{\rm int}(Q_1\times Q'_1)\supset \cdots\supset \text{\rm int}(Q_{l-1}\times Q'_{l-1})\supset \text{\rm int}(Q_l\times Q'_l).$$
    Thus, by the definition of $\sim_{\lambda_d\times \lambda_d}$, we have that $g^{-1}g'\in G_{Q_1}\times G_{Q'_1}$. On the other hand, by the construction of $M$, we have  the following sequence of subgroups of $G_d^{2n}$
$$G_{Q_1}\times G_{Q'_1}< \cdots<G_{Q_{l-1}}\times G_{Q'_{l-1}}< G_{Q_l}\times G_{Q'_l}$$
where $G_Q$ is the subgroup of $G_d^n$, determined by $Q$ and the
characteristic function of $P$ (see subsection 2.1). Note that $G_{Q_i}\times G_{Q'_i}= G_{Q_i\times Q'_i}$ for all $1\leq i\leq l$ by the definition of $\sim_{\lambda_d\times \lambda_d}$. Furthermore, we have that
$g^{-1}g'\in G_{Q_i\times Q'_i}$ for all $1\leq i\leq l$. Thus,
$(\Omega(a,t),g)\sim_{\lambda_d\times\lambda_d} (\Omega(a,t), g')$.

Moreover, we conclude by Claim A that $\widetilde{\Omega}$ descends
to an equivariant strong deformation retraction of $F_{G_d^n}(M,2)$
onto $X_d(M)$.
\end{proof}

\begin{cor}\label{equiv-coh}
 The equivariant cohomologies of $F_{G_d^n}(F, 2)$ and $X_d(M)$ are isomorphic, i.e.,
$$H^*_{G_d^{2n}}(F_{G_d^n}(F, 2))\cong H^*_{G_d^{2n}}(X_d(M)).$$
\end{cor}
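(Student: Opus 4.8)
The plan is to deduce this immediately from Theorem~\ref{main re}. That theorem furnishes an equivariant strong deformation retraction of $F_{G_d^n}(M,2)$ onto $X_d(M)$, where both spaces carry the ambient $G_d^{2n}$-action and $X_d(M)\hookrightarrow F_{G_d^n}(M,2)$ is the natural $G_d^{2n}$-equivariant inclusion. In particular, writing $\iota$ for this inclusion and $r$ for the retraction obtained from $\widetilde\Omega$, we have $G_d^{2n}$-maps with $r\circ\iota=\mathrm{id}$ and $\iota\circ r$ equivariantly homotopic to the identity through $\widetilde\Omega$; hence $\iota$ is a $G_d^{2n}$-homotopy equivalence. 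This works uniformly for $d=1$ and $d=2$ since Theorem~\ref{main re} is stated for both.

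First I would recall that equivariant cohomology here means Borel cohomology, $H^*_{G_d^{2n}}(Y)=H^*(EG_d^{2n}\times_{G_d^{2n}}Y)$, and then invoke the homotopy invariance of the Borel construction: the functor $Y\mapsto EG_d^{2n}\times_{G_d^{2n}}Y$ sends $G_d^{2n}$-homotopies to ordinary homotopies, because a $G_d^{2n}$-homotopy $H\colon Y\times[0,1]\to Y'$ (with trivial action on $[0,1]$) induces $\mathrm{id}\times_{G_d^{2n}}H\colon (EG_d^{2n}\times_{G_d^{2n}}Y)\times[0,1]\to EG_d^{2n}\times_{G_d^{2n}}Y'$, an honest homotopy. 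Applying this to $\iota$ and $r$ shows that $EG_d^{2n}\times_{G_d^{2n}}\iota$ is a homotopy equivalence, hence induces an isomorphism on singular cohomology, which is exactly the claimed isomorphism $H^*_{G_d^{2n}}(F_{G_d^n}(M,2))\cong H^*_{G_d^{2n}}(X_d(M))$. Equivalently, one may phrase it by saying that an equivariant strong deformation retract induces an ordinary strong deformation retract of Borel constructions, and cohomology is a homotopy invariant.

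There is essentially no obstacle here beyond bookkeeping: the only point needing a line of justification is that $\widetilde\Omega$, after descending from $F(P,2)\times G_d^{2n}$ to $F_{G_d^n}(M,2)$, is genuinely $G_d^{2n}$-equivariant (not merely a fiberwise homotopy over $F(P,2)$). But this is built into its definition $\widetilde\Omega((a,g),t)=(\Omega(a,t),g)$, which commutes with left translation in the $G_d^{2n}$-coordinate $g$, and this property survives passage to the quotient by $\sim_{\lambda_d\times\lambda_d}$ because that equivalence relation is itself defined via left cosets of the subgroups $G_{Q_i\times Q_i'}$. Hence $\widetilde\Omega$ descends to a $G_d^{2n}$-equivariant homotopy, Theorem~\ref{main re} applies as stated, and the corollary follows.
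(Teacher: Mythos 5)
Your proposal is correct and follows essentially the same route as the paper: the paper also takes the equivariant strong deformation retraction $\Upsilon$ from Theorem~\ref{main re}, lifts it to $EG_d^{2n}\times F_{G_d^n}(M,2)\times[0,1]$ via $(x,y,t)\mapsto(x,\Upsilon(y,t))$, and observes that this descends to a deformation retraction of the Borel construction $EG_d^{2n}\times_{G_d^{2n}}F_{G_d^n}(M,2)$ onto $EG_d^{2n}\times_{G_d^{2n}}X_d(M)$, which is exactly your homotopy-invariance-of-the-Borel-construction argument. Your closing check that $\widetilde\Omega$ descends equivariantly is already contained in the proof of Theorem~\ref{main re} (Claim A), so no additional work is needed.
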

\begin{proof}
Let $\Upsilon: F_{G_d^n}(F, 2)\times
[0,1]\longrightarrow F_{G_d^n}(F, 2)$ be the equivariant strong
deformation retraction of $F_{G_d^n}(F, 2)$ onto $X_d(M)$.
Consider the following equivariant lifting of  $\Upsilon$
$$\widetilde{\Upsilon}:EG_d^{2n}\times F_{G_d^n}(F,
2)\times [0,1]\longrightarrow EG_d^{2n}\times F_{G_d^n}(F, 2)$$ by
mapping $(x, y, t)$ to $(x, \Upsilon(y, t))$. This lifting
$\widetilde{\Upsilon}$ descends to a deformation retraction
$$EG_d^{2n}\times_{G_d^{2n}} F_{G_d^n}(F, 2)\times [0,1]\longrightarrow
EG_d^{2n}\times_{G_d^{2n}} F_{G_d^n}(F, 2)$$ of
$EG_d^{2n}\times_{G_d^{2n}} F_{G_d^n}(F, 2)$ onto
$EG_d^{2n}\times_{G_d^{2n}} X_d(M)$, which induces the required
result.
\end{proof}

\subsection{The simplicial complex $K_P$ associated to $X_d(M)$}

By the intersection property of all $(\pi_d^{-1})^{\times 2}(F_1\times F_2), F_1, F_2\in \mathcal{F}(P)$ with $F_1\cap F_2=\emptyset$,
     in the way as shown in section~\ref{MV},  $X_d(M)$ can determine a simplicial complex $K_P$, which is
     exactly the dual cell decomposition of $\mathcal{A}(P,2)$ as a polyhedron since the dual cell decomposition of each simple convex polytope is
a simplicial complex. $K_P$ only depends upon the combinatorial structure of $P$.
    Actually,  $K_P$ can also be directly determined by the intersection property of
    all faces $F_1\times F_2$ with $F_1\cap F_2=\emptyset$ in $P^{\times 2}$, where $ F_1, F_2\in \mathcal{F}(P)$.

    Specifically, let $\mathcal{B}(P)$ be the set consisting of all
    faces $F_1\times F_2$ with $F_1\cap F_2=\emptyset$ in $P^{\times 2}$ where $ F_1, F_2\in
    \mathcal{F}(P)$, such that each face in $\mathcal{B}(P)$ is not
    a proper face of other faces in $\mathcal{B}(P)$. Then we have
    that
    $$\mathcal{A}(P,2)=\bigcup_{B\in \mathcal{B}(P)}B.$$
Now the associated simplicial complex $K_P$ is defined as follows:
regard $\mathcal{B}(P)$ as the vertex set of $K_P$, and a simplex of
$K_P$ is a subset $\{B_1, ..., B_r\}$ of $\mathcal{B}(P)$ such that
the intersection $\cap_{i=1}^rB_i$ is nonempty.
\begin{example}\label{dual decom}
Let $P(m)$ be an $m$-polygon with facets $F_1, ..., F_m$ and
vertices $v_1, ..., v_m$, as shown in the following diagram:
\[   \begin{picture}(0,0)%
\includegraphics{f4.pstex}%
\end{picture}%
\setlength{\unitlength}{2763sp}%
\begingroup\makeatletter\ifx\SetFigFont\undefined%
\gdef\SetFigFont#1#2#3#4#5{%
  \reset@font\fontsize{#1}{#2pt}%
  \fontfamily{#3}\fontseries{#4}\fontshape{#5}%
  \selectfont}%
\fi\endgroup%
\begin{picture}(3028,2856)(2476,-2599)
\put(2476,-511){\makebox(0,0)[lb]{\smash{{\SetFigFont{10}{12.0}{\rmdefault}{\mddefault}{\updefault}$F_m$}}}}
\put(3451, 89){\makebox(0,0)[lb]{\smash{{\SetFigFont{10}{12.0}{\rmdefault}{\mddefault}{\updefault}$F_1$}}}}
\put(4126, 89){\makebox(0,0)[lb]{\smash{{\SetFigFont{9}{10.8}{\rmdefault}{\mddefault}{\updefault}$v_1$}}}}
\put(4576,-286){\makebox(0,0)[lb]{\smash{{\SetFigFont{10}{12.0}{\rmdefault}{\mddefault}{\updefault}$F_2$}}}}
\put(4876,-661){\makebox(0,0)[lb]{\smash{{\SetFigFont{9}{10.8}{\rmdefault}{\mddefault}{\updefault}$v_2$}}}}
\put(4951,-1111){\makebox(0,0)[lb]{\smash{{\SetFigFont{10}{12.0}{\rmdefault}{\mddefault}{\updefault}$F_3$}}}}
\put(4876,-1711){\makebox(0,0)[lb]{\smash{{\SetFigFont{9}{10.8}{\rmdefault}{\mddefault}{\updefault}$v_3$}}}}
\put(4576,-2086){\makebox(0,0)[lb]{\smash{{\SetFigFont{10}{12.0}{\rmdefault}{\mddefault}{\updefault}$F_4$}}}}
\put(3451,-2536){\makebox(0,0)[lb]{\smash{{\SetFigFont{10}{12.0}{\rmdefault}{\mddefault}{\updefault}$F_5$}}}}
\put(2926,-2236){\makebox(0,0)[lb]{\smash{{\SetFigFont{9}{10.8}{\rmdefault}{\mddefault}{\updefault}$v_5$}}}}
\put(4126,-2386){\makebox(0,0)[lb]{\smash{{\SetFigFont{9}{10.8}{\rmdefault}{\mddefault}{\updefault}$v_4$}}}}
\put(2851, 14){\makebox(0,0)[lb]{\smash{{\SetFigFont{9}{10.8}{\rmdefault}{\mddefault}{\updefault}$v_m$}}}}
\end{picture}%
\centering
   \]
  Now let us look at the simplicial complex $K_{P(m)}$ dual to $\mathcal{A}(P,2)$.
   $K_{P(3)}$ is a 6-polygon with vertices
   $v_1\times F_3, v_2\times F_1, v_3\times F_2, F_1\times v_2, F_2\times v_3, F_3\times v_1$,
    $K_{P(4)}$ is a 4-polygon with four vertices $F_1\times F_3, F_2\times F_4, F_3\times F_1, F_4\times F_2$, and
     $K_{P(5)}$ is a 2-dimensional simplicial complex with 10 vertices. Actually $K_{P(5)}$ is exactly an annuls as shown in the following diagram:
\[   \begin{picture}(0,0)%
\includegraphics{f5.pstex}%
\end{picture}%
\setlength{\unitlength}{1973sp}%
\begingroup\makeatletter\ifx\SetFigFont\undefined%
\gdef\SetFigFont#1#2#3#4#5{%
  \reset@font\fontsize{#1}{#2pt}%
  \fontfamily{#3}\fontseries{#4}\fontshape{#5}%
  \selectfont}%
\fi\endgroup%
\begin{picture}(7200,6081)(4351,-4474)
\put(7951,239){\makebox(0,0)[lb]{\smash{{\SetFigFont{7}{8.4}{\rmdefault}{\mddefault}{\updefault}$F_1\times F_4$}}}}
\put(6751,-736){\makebox(0,0)[lb]{\smash{{\SetFigFont{7}{8.4}{\rmdefault}{\mddefault}{\updefault}$F_5\times F_3$}}}}
\put(10651,1439){\makebox(0,0)[lb]{\smash{{\SetFigFont{7}{8.4}{\rmdefault}{\mddefault}{\updefault}$F_2\times F_4$}}}}
\put(11551,-2011){\makebox(0,0)[lb]{\smash{{\SetFigFont{7}{8.4}{\rmdefault}{\mddefault}{\updefault}$F_3\times F_5$}}}}
\put(8851,-736){\makebox(0,0)[lb]{\smash{{\SetFigFont{7}{8.4}{\rmdefault}{\mddefault}{\updefault}$F_2\times F_5$}}}}
\put(8101,-4411){\makebox(0,0)[lb]{\smash{{\SetFigFont{7}{8.4}{\rmdefault}{\mddefault}{\updefault}$F_4\times F_1$}}}}
\put(4351,-2086){\makebox(0,0)[lb]{\smash{{\SetFigFont{7}{8.4}{\rmdefault}{\mddefault}{\updefault}$F_5\times F_2$}}}}
\put(5176,1439){\makebox(0,0)[lb]{\smash{{\SetFigFont{7}{8.4}{\rmdefault}{\mddefault}{\updefault}$F_1\times F_3$}}}}
\put(6451,-2686){\makebox(0,0)[lb]{\smash{{\SetFigFont{7}{8.4}{\rmdefault}{\mddefault}{\updefault}$F_4\times F_2$}}}}
\put(9451,-2686){\makebox(0,0)[lb]{\smash{{\SetFigFont{7}{8.4}{\rmdefault}{\mddefault}{\updefault}$F_3\times F_1$}}}}
\end{picture}%
\centering
   \]
In general, when $m>5$, $K_{P(m)}$ is a 3-dimensional simplicial
complex with vertex set $\{F_i\times F_j| F_i\cap F_j=\emptyset\}$
such that there are 3-dimensional simplices  of the form
$$\{F_i\times F_j, F_{i+1}\times F_j, F_i\times F_{j+1}, F_{i+1}\times F_{j+1}\}$$
where $F_{i+1}$ will be $F_1$ if $i=m$, and $F_{j+1}$ will be $F_1$ if $j=m$,
and some additional 2-dimensional
simplices of the form
$$\{F_i\times F_{i+2}, F_i\times F_{i+3}, F_{i+1}\times F_{i+3}\} \text{ or }\{F_{i+2}\times F_i, F_{i+3}\times
F_i, F_{i+3}\times F_{i+1}\}$$ where $F_{i+2}$ will be $F_1, F_2$ if
$i=m-1, m$ and $F_{i+3}$ will be $F_1, F_2, F_3$  if $i=m-2, m-1,
m$. Obviously,  $K_{P(m)}$ contains $m(m-3)$ vertices. We also know
from Remark~\ref{hom-type} that $K_{P(m)}$ is homotopic to a circle.

\end{example}



\subsection{Examples for the homotopy types of $F_{G_d^n}(M,2)$}\label{exam}
Now let us look at the case $n=2$. In this case, $P$ is a polygon. Based upon Example~\ref{dual decom}, we have that

\begin{enumerate}
\item[(1)] When $P$ is a $3$-polygon, we have that $F_{{\Bbb Z}_2^2}(M,2)$ has the homotopy type of the following 1-dimensional simplicial complex
\[   \begin{picture}(0,0)%
\includegraphics{f1.pstex}%
\end{picture}%
\setlength{\unitlength}{2368sp}%
\begingroup\makeatletter\ifx\SetFigFont\undefined%
\gdef\SetFigFont#1#2#3#4#5{%
  \reset@font\fontsize{#1}{#2pt}%
  \fontfamily{#3}\fontseries{#4}\fontshape{#5}%
  \selectfont}%
\fi\endgroup%
\begin{picture}(4224,3024)(1789,-3373)
\end{picture}%
\centering
   \]
   and  $F_{T^2}(M,2)$ has the homotopy type of a 2-dimensional simplicial complex produced by replacing six circles of the above complex by six 2-spheres.
\item[(2)] When $P$ is a $4$-polygon, we have that $F_{{\Bbb Z}_2^2}(M,2)$ has the homotopy type of the following 2-dimensional simplicial complex
\[   \begin{picture}(0,0)%
\includegraphics{f2.pstex}%
\end{picture}%
\setlength{\unitlength}{1579sp}%
\begingroup\makeatletter\ifx\SetFigFont\undefined%
\gdef\SetFigFont#1#2#3#4#5{%
  \reset@font\fontsize{#1}{#2pt}%
  \fontfamily{#3}\fontseries{#4}\fontshape{#5}%
  \selectfont}%
\fi\endgroup%
\begin{picture}(14221,5024)(1392,-5173)
\end{picture}%
\centering
   \]
   and  $F_{T^2}(M,2)$  has the homotopy type of a 4-dimensional simplicial complex produced by replacing four tori of the above complex by four copies of $S^2\times S^2$.
 \item[(3)]  When $P$ is a $5$-polygon, we have that $F_{{\Bbb Z}_2^2}(M,2)$ has the homotopy type of
\[   \begin{picture}(0,0)%
\includegraphics{f3.pstex}%
\end{picture}%
\setlength{\unitlength}{1973sp}%
\begingroup\makeatletter\ifx\SetFigFont\undefined%
\gdef\SetFigFont#1#2#3#4#5{%
  \reset@font\fontsize{#1}{#2pt}%
  \fontfamily{#3}\fontseries{#4}\fontshape{#5}%
  \selectfont}%
\fi\endgroup%
\begin{picture}(9624,11695)(1189,-8744)
\put(2626,-8686){\makebox(0,0)[lb]{\smash{{\SetFigFont{7}{8.4}{\rmdefault}{\mddefault}{\updefault}The resulting space is obtained by gluing same colored circles together }}}}
\end{picture}%
\centering
   \]
 and  $F_{T^2}(M,2)$  has the homotopy type of a 4-dimensional simplicial complex produced by replacing all tori and circles of the above complex by $S^2\times S^2$ and $S^2$ respectively.
\end{enumerate}

\section{The homology  of $F_{G_d^n}(M,2)$ }\label{b-eq}

Throughout the following, assume that $\pi_d:M^{dn}\longrightarrow
P^n$ is a $dn$-dimensional $G_d^n$-manifold over a simple convex
polytope $P$.





\subsection{The Mayer--Vietoris spectral sequence of $F_{G_d^n}(M,2)$}
Recall that $K_P$ is the simplicial complex associated to $X_d(M)$, and it indicates the intersection property of submanifolds of
$\{(\pi_d^{-1})^{\times 2}(F_1\times F_2)| F_1, F_2\in
\mathcal{F}(P) \text{ with } F_1\cap F_2=\emptyset\}$ in $X_d(M)$.
Then we know from section~\ref{MV} that $X_d(M)$ with $K_P$
together can be associated to a Mayer--Vietoris spectral sequence
with ${\Bbb Z}$ coefficients, denoted by $E^1_{p,q}(K_P, d), ...,
E^\infty_{p,q}(K_P, d)$. Here we call it the {\em Mayer--Vietoris
spectral sequence} of $F_{G_d^n}(M,2)$.
  Then we have that (also see Theorem~\ref{spectral} in Section~\ref{MV})
 \begin{thm}\label{mv-c}
The Mayer--Vietoris spectral sequence $$\{E^1_{p,q}(K_P, d), ...,
E^\infty_{p,q}(K_P, d)\}$$ of $F_{G_d^n}(M,2)$  converges to
$H_*(F_{G_d^n}(M,2))$, i.e.,
$$H_i(F_{G_d^n}(M,2))\cong \sum_{p+q=i} E_{p,q}^\infty(K_p, d).$$
 \end{thm}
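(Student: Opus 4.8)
The plan is to realize $X_d(M)$ as the union of a finite, combinatorially indexed family of closed submanifolds whose nerve is precisely the simplicial complex $K_P$, and then invoke the general Mayer--Vietoris spectral sequence for such a cover (recalled in Section~\ref{MV}). Concretely, I would index the cover by the vertex set $\mathcal{B}(P)$ of $K_P$: for each maximal face $B = F_1 \times F_2 \in \mathcal{B}(P)$ with $F_1 \cap F_2 = \emptyset$, put $U_B = (\pi_d^{-1})^{\times 2}(F_1 \times F_2) \subseteq X_d(M)$. By Theorem~\ref{main re} (i.e.\ Theorem~\ref{theorem1.4}), $X_d(M) = \bigcup_{B \in \mathcal{B}(P)} U_B$, and since $\pi_d^{-1}(F_i)$ is itself a $G_d^{\dim F_i}$-manifold over $F_i$ by \cite[Lemma~1.3]{dj}, each $U_B$ is a compact subcomplex (in a suitable CW/simplicial structure on $X_d(M)$) — so all the intersections are subcomplexes and the cover is a closed cover to which the homological Mayer--Vietoris spectral sequence applies.

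The second step is to match the combinatorics: for any subset $\{B_1, \dots, B_r\} \subseteq \mathcal{B}(P)$, the intersection $U_{B_1} \cap \cdots \cap U_{B_r}$ equals $(\pi_d^{-1})^{\times 2}(B_1 \cap \cdots \cap B_r)$, which is nonempty exactly when $B_1 \cap \cdots \cap B_r \neq \emptyset$ — and that is by definition the condition for $\{B_1, \dots, B_r\}$ to span a simplex of $K_P$. Hence the nerve of the cover $\{U_B\}$ is $K_P$, and the $E^1$-page has the form $E^1_{p,q}(K_P,d) = \bigoplus_{\sigma \in K_P,\, \dim\sigma = p} H_q\big((\pi_d^{-1})^{\times 2}(B_\sigma)\big)$ with the usual Čech-type differential, exactly as in the statement of Theorem~\ref{spectral} in Section~\ref{MV}. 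The convergence $\sum_{p+q=i} E^\infty_{p,q}(K_P,d) \cong H_i(X_d(M))$ is then the conclusion of that general theorem; combining it with the isomorphism $H_*(X_d(M)) \cong H_*(F_{G_d^n}(M,2))$ supplied by Theorem~\ref{main re} (a strong deformation retraction preserves homology) gives the claim.

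The one genuine technical point — and the step I expect to be the main obstacle — is verifying that $\{U_B\}_{B \in \mathcal{B}(P)}$ really is an \emph{excisive} / CW-type closed cover of $X_d(M)$ with the additional property that each $U_B$ and each multi-intersection carries compatible CW structures so that the algebraic machinery of Section~\ref{MV} applies verbatim. This amounts to choosing a triangulation (or regular CW structure) on $P$ refining the face poset, pulling it back along $\pi_d$ to $M$ and then to $M^{\times 2}$, restricting to $X_d(M) = \mathcal{A}(P,2) \times_{\sim} G_d^{2n}$, and checking that each $U_B$ is a full subcomplex; once this is in place everything else is formal. I would handle it by appealing to the fact that $\mathcal{A}(P,2) = \bigcup_{B \in \mathcal{B}(P)} B$ is a polyhedral complex whose dual cell decomposition is the simplicial complex $K_P$ (already noted in the paragraph following Theorem~\ref{theorem1.4}), and that $\pi_d$ is a locally standard, hence cellular, map; this makes the pullback structures automatic. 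The remaining content — the shape of the $E^1$-differential and convergence — is then exactly Theorem~\ref{spectral}, so no further work is needed.
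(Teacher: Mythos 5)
Your proposal is correct and follows the paper's own route: the paper obtains Theorem~\ref{mv-c} exactly by covering $X_d(M)$ with the submanifolds $(\pi_d^{-1})^{\times 2}(F_1\times F_2)$, $F_1\cap F_2=\emptyset$, whose intersection pattern defines $K_P$, applying the general Mayer--Vietoris spectral sequence of Theorem~\ref{spectral} in Section~\ref{MV}, and transferring the conclusion to $F_{G_d^n}(M,2)$ via the strong deformation retraction of Theorem~\ref{main re}. The CW-compatibility point you flag is handled in the paper in the same implicit way (cell structures coming from the polytopal faces pulled back along $\pi_d$), so there is no substantive difference.
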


\subsection{Relation between  $E_{p,q}(K_P, 1)$ and $E_{p,q}(K_P, 2)$}
   Davis and Januszkiewicz showed in  \cite[Theorem 3.1]{dj} that the Betti numbers (mod 2 Betti numbers for $d=1$)
   of the $G_d^n$-manifold $\pi_d: M^{dn}\longrightarrow P^n$ only depend upon the combinatorics (more precisely, the $h$-vector) of
    $P^n$. Specifically, there is a height function $\phi$ on $P^n\subset {\Bbb R}^n$
    defined by $\phi(x)=<x, w>$ where $w$ is tangent to no proper face of $P^n$.
    This height function $\phi$ determines a perfect cell decomposition of $M^{dn}$ in the sense of Morse theory,
    and gives  a 1-1 correspondence between all $di$-dimensional perfect
    cells $e_v$ of $M^{dn}$ and all vertices $v$ of index $i$. The closure  of every $di$-dimensional perfect
    cell $e_v$ can be used as a generator of $H_{di}(M^{dn};R_d)$, and we denote
    this associated generator of $H_{di}(M^{dn}; R_d)$ by $\gamma_d(v,P)$.
    In particular, when $v$ runs over all vertexes of index $i$, all $\gamma_d(v,P)$
    form a basis of $H_{di}(M^{dn}; R_d)$. In addition, when $d=2$, $H_{\text{odd}}(M^{2n};{\Bbb Z})=0$
    and $H_{2i}(M^{2n};
    {\Bbb Z})$ is a free abelian group so $\dim H_{2i}(M^{2n};
    {\Bbb Z}_2)$ is equal to the rank of $H_{2i}(M^{2n};
    {\Bbb Z})$. For convenience, we still denote the generators of $H_{2i}(M^{2n};
    {\Bbb Z}_2)$ by those $\gamma_2(v,P)$. Thus there is a natural
    isomorphism
\begin{equation}\label{na-iso}
\zeta: H_i(M^n; {\Bbb Z}_2)\longrightarrow H_{2i}(M^{2n};{\Bbb
Z}_2)\end{equation}
 by mapping $\gamma_1(v,P)$ to $\gamma_2(v,P)$.

  Now let $F^l$ be a $l$-face of $P^n$ and $\eta: F^l\hookrightarrow P^n$ be the natural
imbedding. By $\pi_d^{-1}\eta: \pi_d^{-1}(F^l)\hookrightarrow
\pi_d^{-1}(P^n)=M^{dn}$ we denote the pull-back of $\eta$ via
$\pi_d^{-1}$. Note that $\pi_d^{-1}(F^l)\longrightarrow F^l$ is
still a $G_d^l$-manifold (see \cite[Lemma 1.3]{dj}), so there is
still a natural isomorphism $H_i(\pi_1^{-1}(F^l);{\Bbb
Z}_2)\longrightarrow H_{2i}(\pi_2^{-1}(F^l);{\Bbb Z}_2)$, also
denoted by $\zeta$.
\begin{lem}\label{na-hom}
Let $(\pi_d^{-1}\eta)_*: H_*(\pi_d^{-1}(F^l);{\Bbb Z}_2)\longrightarrow H_*(\pi_d^{-1}(P^n);{\Bbb Z}_2)$ be the homomorphism induced by $\pi_d^{-1}\eta$. Then
 $\zeta$ is
commutative with $(\pi_d^{-1}\eta)_*$, i.e.,
$$(\pi_2^{-1}\eta)_*\circ\zeta=\zeta\circ(\pi_1^{-1}\eta)_*.$$
\end{lem}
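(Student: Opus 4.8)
The plan is to reduce the statement to a combinatorial fact by using the conjugation involution as a bridge between $M^n$ and $M^{2n}$. First I would normalize the data: let $\tau$ be the conjugation involution on $M^{2n}$ of \cite[Corollary 1.9]{dj}, whose fixed set is a small cover over $P$; by \cite[Theorem 3.1]{dj} the mod~$2$ homology of a $G_d^n$-manifold over $P$ depends only on $P$, so there is no loss in assuming $M^n=(M^{2n})^\tau$, with $\lambda_1$ the mod~$2$ reduction of $\lambda_2$. Choose a metric on $M^{2n}$ invariant under both $T^n$ and $\tau$, and use the Morse function $\phi\circ\pi_2$, the direction $w$ being chosen generic for $P$ and for all of its faces at once (the bad set of directions is a finite union of proper subspaces). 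Then the perfect cell decomposition of $M^{2n}$ is $\tau$-invariant, each vertex index is halved on passing to $M^n$, and each closed descending cell $\overline{e_v}\subset M^{2n}$ meets $M^n$ in precisely the closed descending cell of $v$ for $\phi\circ\pi_1$; so, at the level of these perfect cycles, the isomorphism $\zeta$ of \eqref{na-iso} is simply restriction to the $\tau$-fixed locus. By \cite[Lemma 1.3]{dj} the same discussion applies over $F^l$: $\pi_d^{-1}(F^l)\to F^l$ is a $G_d^l$-manifold, $\pi_1^{-1}(F^l)=(\pi_2^{-1}(F^l))^\tau$, the function $\phi\circ\pi_2$ restricts to a $\tau$-invariant perfect Morse function, and $\zeta$ on $H_*(\pi_d^{-1}(F^l);\mathbb{Z}_2)$ is again restriction to the fixed locus; moreover $\pi_2^{-1}\eta$ is $\tau$-equivariant and restricts on fixed loci to $\pi_1^{-1}\eta$.

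With this in hand the assertion says that for the $\tau$-equivariant closed embedding $j=\pi_2^{-1}\eta$ the Gysin pushforward $j_*$ commutes with restriction to the fixed locus. I would prove this cohomologically. On cohomology $j_*$ is Poincar\'e--Lefschetz dual to the umkehr map $j^{!}$, which is determined by the restriction $j^{*}\colon H^*(M^{2n};\mathbb{Z}_2)\to H^*(\pi_2^{-1}(F^l);\mathbb{Z}_2)$ together with the cup products. By Davis--Januszkiewicz \cite{dj} both rings are Stanley--Reisner rings of $P$ and of $F^l$ modulo the linear systems cut out by $\overline{\lambda_2}$ and its restriction, and $j^{*}$ is the corresponding restriction map, which in these presentations is explicit and combinatorial (the class dual to a facet $F_i$ goes to the class dual to $F_i\cap F^l$, to $0$ when $F_i\cap F^l=\emptyset$, and to a prescribed $\overline{\lambda_2}$-linear combination when $F_i\supseteq F^l$). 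On the small-cover side one has the identical presentations with the identical linear data, and under them ``restriction to the fixed locus'' is the degree-halving identity (the conjugation-space relation $v_i^{(2)}\mapsto v_i^{(1)}$). It therefore commutes with $j^{*}$, hence with $j^{!}$, and dualizing back across Poincar\'e--Lefschetz duality yields $(\pi_2^{-1}\eta)_*\circ\zeta=\zeta\circ(\pi_1^{-1}\eta)_*$.

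I expect the main obstacle to be the two compatibility checks rather than any deep input. One must verify that the Morse-theoretic $\zeta$ of \eqref{na-iso} really agrees with the cohomological degree-doubling isomorphism; equivalently, that the change of basis from the descending-cell classes $\gamma_d(v,P)$ to the face classes $[\pi_d^{-1}(G)]$ is unipotent with $d$-independent entries, which follows from the $\tau$-invariance set up above (or from the torus-equivariant GKM structure on $M^{2n}$). And one must keep the degree shift $2(n-l)$ straight when transporting $j_*$ back through the umkehr map $j^{!}$. If one prefers to bypass cohomology, there is a parallel hands-on route: represent $\gamma_d(v,F^l)$ by the closed descending cell over the descending face of $v$ in $F^l$, represent the basis Poincar\'e-dual to $\{\gamma_d(w,P)\}$ by the preimages of the ascending faces as in \cite{dj}, and observe that every entry of the matrix of $(\pi_d^{-1}\eta)_*$ is then a mod~$2$ intersection number in $M^{dn}$ of $\pi_d^{-1}$ of two faces of $P$; a small $G_d^n$-equivariant perturbation makes these transverse, with intersection a finite union of $G_d^n$-orbits of points lying over prescribed relative interiors of faces, whose parity is read off the face poset of $P$ and hence is the same for $d=1$ and $d=2$. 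Either way the content is that everything in sight is governed by the combinatorics of $P$ and the shared mod~$2$ datum $\overline{\lambda_2}$.
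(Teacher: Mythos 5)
There is a genuine gap, and it sits at the very first step: the ``no loss in assuming $M^n=(M^{2n})^\tau$'' reduction. The lemma is a statement about the \emph{specific} small cover $\pi_1:M^n\to P^n$ and quasi-toric manifold $\pi_2:M^{2n}\to P^n$ fixed in this section; nothing relates $\lambda_1$ to the mod~$2$ reduction of $\lambda_2$, and the statement is used later (in the proof of Theorem~\ref{betti}) for arbitrary such pairs. Citing \cite[Theorem 3.1]{dj} only gives that the mod~$2$ Betti numbers of any small cover over $P$ agree with the $h$-vector; it does not let you transport the commutativity of the square from the conjugation small cover $(M^{2n})^\tau$ to an arbitrary $M^n$, because different small covers over the same $P$ are genuinely different spaces (torus versus Klein bottle over a square, for instance), with different cohomology rings and different intersection data, and the maps $(\pi_1^{-1}\eta)_*$ live on the specific $M^n$. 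To transfer the statement you would need to know that the matrix of $(\pi_1^{-1}\eta)_*$ in the bases $\{\gamma_1(v,F)\},\{\gamma_1(v,P)\}$ is independent of $\lambda_1$ --- but that independence is essentially the content of the lemma, so the normalization is circular. The same issue infects the main cohomological argument, since the Stanley--Reisner presentations you invoke have the linear ideal $J$ cut out by $\lambda_d$, and the degree-halving map $v_i^{(2)}\mapsto v_i^{(1)}$ is a ring isomorphism only when $\lambda_1\equiv\lambda_2 \pmod 2$.

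Your fallback ``hands-on'' route does not repair this, because its key assertion --- that the mod~$2$ intersection parities of facial submanifolds are read off the face poset of $P$ and hence are the same for $d=1$ and $d=2$ --- is false when the relevant faces meet in positive dimension. Over the square, the facial sphere of a facet has self-intersection $0$ in $\mathbb{C}P^1\times\mathbb{C}P^1$ but $\pm 1$ in a Hirzebruch surface, and the analogous $1$-dimensional facial circles in the torus and the Klein bottle have different mod~$2$ self-intersections; so these parities depend on $\lambda_d$, not just on the combinatorics, and an equivariant perturbation producing ``a finite union of $G_d^n$-orbits of points'' is not available for $T^n$. The paper's proof sidesteps all duality, ring structure and intersection theory: one chooses the height function $\phi$ on $P^n$ so that $\mathrm{ind}(v,\phi|_{F^l})=\mathrm{ind}(v,\phi)$ for every vertex $v$ of $F^l$; then every descending edge at $v$ lies in $F^l$, the perfect cell structure of $\pi_d^{-1}(F^l)$ is literally a subcomplex of that of $M^{dn}$, and therefore $(\pi_d^{-1}\eta)_*(\gamma_d(v,F))=\gamma_d(v,P)$ for both $d$ simultaneously and for \emph{any} characteristic functions, so the square commutes on basis elements. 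If you want to salvage your approach, you must either prove this basis-level identity directly (at which point the cohomological machinery is unnecessary) or restrict your intersection computation to the transverse-at-a-vertex pairings, which again amounts to the paper's observation.
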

\begin{proof}
The restriction  to $F^l$ of each height function
 on $P^n$ gives a height function on $F^l$. We can always
choose a height function $\phi$ on $P^n$ such that the restriction
$\phi|_{F^l}$ to $F^l$ possesses the property that for each vertex
$v$ of $F^l\hookrightarrow P^n$, the index $\text{ind}(v,
\phi|_{F^l})$ at $v$ of $\phi|_{F^l}$ on $F^l$ is equal to the
$\text{ind}(v, \phi)$ at $v$ of $\phi$ on $P^n$. Then we see from
the proof of \cite[Theorem 3.1]{dj} that the perfect cell structure
of $\pi_d^{-1}(F^l)$ determined by $\phi|_{F^l}$ agrees with that of
$\pi_d^{-1}(P^n)$ determined by $\phi$. In other words, as
CW-complexes,
  $\pi_d^{-1}(F^l)$  is  a subcomplex of $\pi_d^{-1}(P^n)$. This means that
$(\pi_d^{-1}\eta)_*(\gamma_d(v,F))=\gamma_d(v,P)$. Furthermore, for
every generator $\gamma_1(v,F)$ of $H_i(\pi_1^{-1}(F^l);{\Bbb
Z}_2)$, we have
$$(\pi_2^{-1}\eta)_*\circ\zeta(\gamma_1(v,F))=(\pi_2^{-1}\eta)_*(\gamma_2(v,F))$$
$$=\gamma_2(v,P)=\zeta(\gamma_1(v,P))=\zeta\circ(\pi_1^{-1}\eta)_*(\gamma_1(v,F)).$$
This complete the proof.
\end{proof}

From the equation~(\ref{e1}) in Remark~\ref{complex}, we see that
there is the following isomorphism
\begin{equation}\label{equation}
E^1_{p,q}(K_P, d)\cong \bigoplus_{a\in K_P\atop \dim a=p}H_q(X^d_a)
\end{equation} where $X^d_a$ is the intersection of some
submanifolds of the form $(\pi_d^{-1})^{\times 2}(F_1\times F_2)$,
$F_1, F_2\in \mathcal{F}(P^n)$ with $F_1\cap F_2=\emptyset$. Since
the intersection of those faces of the form $F_1\times F_2$ is still
a face of $P^n\times P^n$, we have that $X_a^1$ is a small cover and
$X^2_a$ is a quasitoric manifold, such that their orbit polytopes
are the same. Thus, as shown in (\ref{na-iso}), there is a natural
isomorphism $\zeta_a: H_q(X_a^1;{\Bbb Z}_2)\longrightarrow
H_{2q}(X_a^2;{\Bbb Z}_2)$.  Then using all isomorphisms $\zeta_a$,
$a\in K_P$ with $\dim a=p$, we induce an isomorphism
$$E^1_{p,q}(K_P, 1)\otimes{\Bbb Z}_2\longrightarrow
E^1_{p,2q}(K_P,2)\otimes{\Bbb Z}_2$$ still denoted by $\zeta$.

On the other hand, as shown in
Remark~\ref{complex}, the differential $\text{\bf d}^{(d)}_1$
 on $E^1_{*,dq}(K_P,d)\otimes {\Bbb Z}_2$ can be explicitly described in terms of
 homomorphisms $$\ell_d: H_*(X^d_a; {\Bbb Z}_2)\longrightarrow H_*(X^d_b; {\Bbb Z}_2)$$ (induced by the  the natural
 imbedding $X^d_a\hookrightarrow X^d_b$) where $b$ is a face of $a$
 and $a\in K$. As we have seen before, $X^d_a$ and $X^d_b$ are small
 covers if $d=1$ and quasitoric manifolds if $d=2$. Thus there are
 two faces $F_a$ and $F_b$ of $P^n\times P^n$ such that $F_a$ is a
 face of $F_b$, and  the natural imbedding $F_a\hookrightarrow F_b$
 induces the natural imbedding $(\pi_d^{-1})^{\times
 2}(F_a)=X_a^d\hookrightarrow (\pi_d^{-1})^{\times 2}(F_b)=X_b^d$.
 Now we fix a height function $\Phi$ on $P^n\times P^n$ as a simple convex polytope, such that  the
 restrictions to $F_a$ and $F_b$ of this height function give the
 perfect cell decompositions of $X^d_a$ and $X^d_b$ respectively, which are
 compatible with the perfect cell decomposition of $(\pi_d^{-1})^{\times 2}(P^n\times
 P^n)=(M^{dn})^{\times 2}$ determined by $\Phi$. Namely,  every perfect cell of $X_a$ is  a perfect cell of
 $X_b$, and in particular, all perfect cells of $X^d_a$ and $X^d_b$
 are also perfect cells of $(\pi_d^{-1})^{\times 2}(P^n\times
 P^n)=(M^{dn})^{\times 2}$. Therefore, if we use $X_a^d$ and $X_b^d$ to
 replace the $\pi_d^{-1}(F^l)$ and $\pi_d^{-1}(P^n)$ of Lemma~\ref{na-hom} respectively, then we have that
 $\zeta\circ \ell_1=\ell_2\circ \zeta$. Moreover, we have that
 $$\zeta\circ \text{\bf d}^{(1)}_1=\text{\bf d}^{(2)}_1\circ \zeta$$
 so $\zeta$ is a chain map between two chain complexes $(E^1(K_P; 1)\otimes{\Bbb Z}_2, \text{\bf d}^{(1)}_1)$ and $(E^1(K_P; 2)\otimes{\Bbb Z}_2, \text{\bf d}^{(2)}_1)$. Since $\zeta$ is an isomorphism, we
 conclude that

\begin{thm}\label{betti} The chain isomorphism $\zeta$ induces the following isomorphism
$$E^2_{p,q}(K_P,1)\otimes{\Bbb Z}_2\cong E^2_{p,2q}(K_P, 2)\otimes{\Bbb Z}_2.$$
\end{thm}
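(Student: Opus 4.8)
The plan is to obtain the statement from the $E^1$-level chain isomorphism $\zeta$ constructed just above, by passing to homology, since the $E^2$-term of a Mayer--Vietoris spectral sequence is nothing but the homology of the $E^1$-page with respect to the first differential. Recall from Section~\ref{MV} that, under the identification $E^1_{p,q}(K_P,d)\otimes{\Bbb Z}_2\cong\bigoplus_{\dim a=p}H_q(X^d_a;{\Bbb Z}_2)$, the first differential $\mathbf{d}^{(d)}_1$ lowers the simplicial degree $p$ by one and preserves the internal degree $q$; hence
$$
E^2_{p,q}(K_P,d)\otimes{\Bbb Z}_2=H_p\big(E^1_{\bullet,q}(K_P,d)\otimes{\Bbb Z}_2,\ \mathbf{d}^{(d)}_1\big).
$$

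First I would observe that, for each fixed $q$, the isomorphism $\zeta$ restricts to an isomorphism of chain complexes
$$
\zeta\colon\big(E^1_{\bullet,q}(K_P,1)\otimes{\Bbb Z}_2,\ \mathbf{d}^{(1)}_1\big)\ \xrightarrow{\ \cong\ }\ \big(E^1_{\bullet,2q}(K_P,2)\otimes{\Bbb Z}_2,\ \mathbf{d}^{(2)}_1\big):
$$
it preserves $p$, carries internal degree $q$ to internal degree $2q$, is bijective because it is the direct sum of the isomorphisms $\zeta_a\colon H_q(X^1_a;{\Bbb Z}_2)\to H_{2q}(X^2_a;{\Bbb Z}_2)$ over the $p$-simplices $a$ of $K_P$, and commutes with the two first differentials, the identity $\zeta\circ\mathbf{d}^{(1)}_1=\mathbf{d}^{(2)}_1\circ\zeta$ being exactly what was checked above via Lemma~\ref{na-hom} and the compatible perfect cell decompositions coming from a single height function on $P^n\times P^n$. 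An isomorphism of chain complexes induces an isomorphism on homology in every degree; applying this for each $q$ and combining with the previous display gives
$$
E^2_{p,q}(K_P,1)\otimes{\Bbb Z}_2\cong E^2_{p,2q}(K_P,2)\otimes{\Bbb Z}_2.
$$

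The genuinely nontrivial step is the one already carried out above — showing that $\zeta$ is a \emph{chain} map, i.e.\ verifying $\zeta\circ\mathbf{d}^{(1)}_1=\mathbf{d}^{(2)}_1\circ\zeta$, which rests on Lemma~\ref{na-hom} and on choosing a height function on $P^n\times P^n$ compatible with the perfect cell decompositions of all the pieces $(\pi_d^{-1})^{\times2}(F_1\times F_2)$ at once — rather than anything in the passage from $E^1$ to $E^2$, which is purely formal. The only residual points that want care are the bigraded bookkeeping (confirming that $\mathbf{d}_1$ really preserves $q$ in the chosen homological indexing, so that homology at $(p,q)$ sits at $(p,q)$ on the $d=1$ side and at $(p,2q)$ on the $d=2$ side) and the coefficient convention: throughout, ``$E^r(K_P,d)\otimes{\Bbb Z}_2$'' must be read as the $E^r$-term of the spectral sequence built with ${\Bbb Z}_2$ coefficients, not as the integral $E^r$-term tensored with ${\Bbb Z}_2$ (the two differ a priori by a $\mathrm{Tor}$ term, since the small covers $X^1_a$ may carry $2$-torsion), and it is the former that $\zeta$ controls.
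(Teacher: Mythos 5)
Your argument is exactly the paper's: the theorem is deduced by passing to homology from the chain isomorphism $\zeta$ constructed beforehand, using that the $E^2$-term is the homology of $(E^1_{\bullet,q}(K_P,d)\otimes\mathbb{Z}_2,\mathbf{d}^{(d)}_1)$, and the paper states this in essentially one line once $\zeta\circ\mathbf{d}^{(1)}_1=\mathbf{d}^{(2)}_1\circ\zeta$ has been checked via Lemma~\ref{na-hom} and the compatible perfect cell decompositions. Your closing caveat --- that ``$E^r(K_P,d)\otimes\mathbb{Z}_2$'' must be read as the term of the Mayer--Vietoris spectral sequence taken with $\mathbb{Z}_2$ coefficients rather than the integral term tensored with $\mathbb{Z}_2$ (which would differ by Tor, since small covers have $2$-torsion) --- is a point the paper leaves implicit, and your reading is the intended one.
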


\subsection{Proof of Theorem~\ref{eq-coh}}
 \cite[Theorem 3.1]{dj} tells us
that all perfect cells of each quasi-toric manifold are of even
dimension, so by (\ref{equation}) we have that  $E^1_{p,q}(K_P,2)$
vanishes for odd $q$. This means that the differential $$\text{\bf d}^{(2)}_r:
E^r_{p, q}(K_P, 2)\longrightarrow E^r_{p-r, q+r-1}(K_P, 2)$$ is a
zero-homomorphism for $r\geq 2$, so the Mayer--Vietoris spectral
sequence of $F_{T^n}(M,2)$ collapses at the $E^2$-term, and
$$H_i(F_{T^m}(M,2))\cong \bigoplus_{p+q=i}E^2_{p,q}(K_P, 2).$$
This completes the proof. \hfill$\Box$

\section{Calculation of the (mod 2) homology of $F_{G_d^2}(M,2)$ and  $F_{G_d^n}(M,2)$ for  $P$  an $n$-simplex}\label{ca}
In this section, using  Theorem~\ref{main re} and the Mayer-Vietoris spectral sequence we  calculate the (mod 2) homology  of $F_{G_d^2}(M,2)$ and  $F_{G_d^n}(M,2)$ for $P$  an  $n$-simplex.
Our results are stated as follows:

\begin{prop}\label{b1}
Let $\pi_d: M\longrightarrow P(m)$ be a $2d$-dimensional
$G_d^2$-manifold over an $m$-polygon $P(m)$. When $m=3$, all nonzero
Betti numbers of $F_{{\Bbb Z}_2^2}(M,2)$ $($resp. $F_{T^2}(M, 2))$
are $$(b_0, b_1)=(1, 7) \ \ \text{\rm (resp.} \ (b_0,
 b_1, b_2)=(1, 1, 6));$$ when $m>3$, all nonzero Betti numbers of
$F_{{\Bbb Z}_2^2}(M,2)$ $($resp. $F_{T^2}(M, 2))$ are
$$(b_0, b_1,
b_2)=(1, 2m+1, m(m-3))\ \ \text{\rm (resp.} \ (b_0, b_1, b_2,
b_4)=(1,1, 2m, m(m-3))).$$ In particular, the non-vanishing homology
of $F_{{\Bbb Z}_2^2}(M,2)$ is free abelian.
\end{prop}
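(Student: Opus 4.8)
The plan is to compute the Mayer--Vietoris spectral sequence $E^*_{p,q}(K_{P(m)},d)$ explicitly, using the equivariant deformation retract $X_d(M)\simeq F_{G_d^2}(M,2)$ of Theorem~\ref{main re} together with the combinatorial description of $K_{P(m)}$ given in Example~\ref{dual decom}. By Theorem~\ref{mv-c} it suffices to identify the $E^\infty$-page. First I would handle the case $d=1$. From Example~\ref{dual decom}, $K_{P(3)}$ is a hexagon (so homotopy equivalent to $S^1$) and each vertex $B=F_i\times F_j$ of $K_{P(m)}$ with $F_i\cap F_j=\emptyset$ corresponds to a submanifold $X^1_B=(\pi_1^{-1})^{\times 2}(F_i\times F_j)$, which for $n=2$ is a product of two small covers over vertices or edges, i.e.\ a finite set of points (when both $F_i,F_j$ are vertices), a disjoint union of circles, or a disjoint union of tori; more precisely when $F_i, F_j$ are both facets of a polygon, $\pi_1^{-1}(F_i)\simeq S^0$, so $X^1_B$ is a finite set of points for $m>3$ entries of the form $F_i\times F_j$ with $i,j$ facets, while for $m=3$ the relevant faces are vertex$\times$facet. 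In all cases $X^1_a$ is a small cover over a simple polytope that is a product of simplices, hence its mod~$2$ homology (and, by Theorem~\ref{main re} combined with the fact that for $n=2$ all the pieces have torsion-free homology, its integral homology) is computed from the $h$-vector. I would then read off the $E^1$-page from \eqref{equation}, compute the differential $\textbf{d}^{(1)}_1$ from the simplicial coboundary of $K_{P(m)}$ twisted by the inclusion-induced maps $\ell_1$, and obtain $E^2=E^\infty$ since $K_{P(m)}$ has dimension $\le 3$ but the relevant $q$-degrees are small enough that higher differentials vanish (for $m>3$ one checks $E^2_{p,q}=0$ unless $p+q\le 2$, killing $\textbf{d}_r$ for $r\ge 2$; for $m=3$, $K_{P(3)}\simeq S^1$ is $1$-dimensional so there is nothing beyond $E^2$).

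For the case $d=2$ I would invoke Theorem~\ref{eq-coh}: the Mayer--Vietoris spectral sequence of $F_{T^2}(M,2)$ collapses at $E^2$, so $H_i(F_{T^2}(M,2))\cong\bigoplus_{p+q=i}E^2_{p,q}(K_{P(m)},2)$. Then Theorem~\ref{betti} gives $E^2_{p,q}(K_{P(m)},1)\otimes\mathbb{Z}_2\cong E^2_{p,2q}(K_{P(m)},2)\otimes\mathbb{Z}_2$, so the $d=2$ Betti numbers are obtained from the $d=1$ ones by doubling the $q$-grading: a class in bidegree $(p,q)$ for $d=1$ becomes a class in total degree $p+2q$ for $d=2$. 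Concretely, the $d=1$ answer $(b_0,b_1)=(1,7)$ for $m=3$ redistributes as $(b_0,b_1,b_2)=(1,1,6)$ (the $p=0,q=0$ part gives $b_0=1$; the $E^2$ classes of total degree $1$ split by their $q$-grading: those with $q=0$ stay in degree~$1$, those with $q=\tfrac12$ do not occur, so I must track carefully which of the seven degree-$1$ classes sits in $(p,q)=(1,0)$ versus $(0,1)$), and similarly $(1,2m+1,m(m-3))$ redistributes as $(1,1,2m,m(m-3))$. Thus the bulk of the work is a careful bookkeeping of the bigrading on the $d=1$ spectral sequence, not a separate computation for $d=2$.

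The main obstacle will be the explicit determination of the $E^2$-page of the $d=1$ spectral sequence, i.e.\ computing the homology of the chain complex $(E^1_{*,q}(K_{P(m)},1)\otimes\mathbb{Z}_2,\textbf{d}^{(1)}_1)$ for each fixed $q$. This requires: (i) enumerating, for each simplex $a\in K_{P(m)}$, the polytope over which $X^1_a$ fibers and hence $H_q(X^1_a;\mathbb{Z}_2)$ via the $h$-vector; (ii) identifying the inclusion-induced maps $\ell_1\colon H_q(X^1_a)\to H_q(X^1_b)$, which by Lemma~\ref{na-hom} send perfect-cell generators to perfect-cell generators and so amount to a combinatorial restriction on vertices of the product polytope; and (iii) assembling these into the simplicial chain complex of $K_{P(m)}$ with these twisted coefficients and taking homology. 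The geometry of $K_{P(m)}$ from Example~\ref{dual decom} — a hexagon for $m=3$, a $4$-gon for $m=4$, an annulus for $m=5$, and a $3$-complex with the listed top simplices for $m>5$ — is what makes this tractable, and I expect the $q=0$ row to recover $H_*(\mathcal{A}(P(m),2))\cong H_*(S^1)$ (contributing $b_0=1$ and one class detecting the circle), while the $q>0$ rows account for the remaining Betti numbers, with the dimension count $m(m-3)$ coming from the number of vertices of $K_{P(m)}$ minus the rank of the relevant differential. Finally, the torsion-freeness claim for $d=1$ follows because every $X^1_a$ has torsion-free integral homology (small covers over products of simplices), the inclusion maps are split over $\mathbb{Z}$ on the perfect-cell bases, and the low dimension of $K_{P(m)}$ forces the integral Mayer--Vietoris spectral sequence to degenerate without creating torsion; I would verify this degeneration case-by-case using the explicit complexes above.
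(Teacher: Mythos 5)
Your plan follows the paper's overall strategy (retract to $X_d(M)$, then run the Mayer--Vietoris spectral sequence), but it contains a concrete error that derails the computation: you assert that for a facet (edge) $F_i$ of the polygon one has $\pi_1^{-1}(F_i)\simeq S^0$, so that the vertex pieces $X^1_B=(\pi_1^{-1})^{\times 2}(F_i\times F_j)$ are finite sets of points when $m>3$. In fact $\pi_d^{-1}(F_i)$ is a $d$-dimensional $G_d^1$-manifold over the edge $F_i$ (by \cite[Lemma 1.3]{dj}), hence a circle $S^1$ for $d=1$ and a sphere $S^2$ for $d=2$; consequently $X^d_B$ is $S^1\times S^1$ (resp.\ $S^2\times S^2$) for $m>3$, and $S^1$ (resp.\ $S^2$) for the vertex$\times$facet pieces when $m=3$. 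This is exactly where the top Betti number $m(m-3)$ comes from (the $H_{2d}$ of the $m(m-3)$ products indexed by the vertices of the nerve), so with your identification the $q>0$ rows of $E^1$ would be wrong and the stated answer unreachable. Beyond this, the proposal leaves the decisive step --- the actual computation of $E^2$ and the verification that nothing survives to give higher differentials --- as a program rather than a proof; in particular your claimed vanishing ``$E^2_{p,q}=0$ unless $p+q\le 2$'' is not checked, and it is less immediate if you work with the full complex $K_{P(m)}$, which is $3$-dimensional for $m>5$. The paper avoids this by replacing $K_{P(m)}$ with a locally nice subcomplex $L_{P(m)}$ (Lemma~\ref{annulus}): a hexagon for $m=3$, a $4$-gon for $m=4$, and a triangulated annulus with $m(m-3)$ vertices, $m(3m-11)$ edges and $2m(m-4)$ triangles for $m\ge 5$, and then computes $E^1_{p,dq}(L_{P(m)},d)$ and $E^2$ over $\mathbb{Z}$ directly for both $d$ simultaneously (Proposition~\ref{dim 2}), identifying for instance the complex $0\to E^1_{1,d}\to E^1_{0,d}\to 0$ with the chain complex of $2m$ disjoint segments to get $E^2_{0,d}\cong\mathbb{Z}^{2m}$, $E^2_{1,d}=0$.

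A second gap concerns your treatment of $d=2$: Theorem~\ref{betti} is an isomorphism of $E^2$-pages only after tensoring with $\mathbb{Z}_2$, so ``doubling the $q$-grading'' of the $d=1$ answer gives at best mod~$2$ Betti numbers for $F_{T^2}(M,2)$; to obtain the stated integral Betti numbers you would still have to rule out torsion in $E^2_{p,2q}(K_{P(m)},2)$, which the mod~$2$ comparison cannot do by itself. The paper sidesteps this by carrying out the integral spectral-sequence computation with the pieces $S^2\times S^2$ and $S^2$ in parallel with the $d=1$ case (the collapse for $d=2$ then being automatic since all $E^1$-terms sit in even $q$, as in Theorem~\ref{eq-coh}). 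So the two missing ingredients are: the correct identification of the fibers over faces of $P(m)$, and an integral (not just mod~$2$) determination of the $E^2$-page for $d=2$.
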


\begin{rem}
As we have seen in Proposition~\ref{b1}, we actually determine the
integral homology of $F_{{\Bbb Z}_2^2}(M,2)$. However, unlike
2-dimensional small covers, the non-vanishing homology of $F_{{\Bbb
Z}_2^2}(M,2)$ has no torsion. In addition, unlike quasitoric
manifolds, odd-dimensional homology of $F_{{T}^n}(M,2)$ may be
non-vanishing in general. This can also be seen from the following
proposition.
\end{rem}

\begin{prop}\label{b2}
Let $\pi_d: M\longrightarrow \Delta^n$ be a $dn$-dimensional
$G_d^n$-manifold over an $n$-simplex $\Delta^n$. Then $M$ is one of ${\Bbb R}P^n, {\Bbb C}P^n$ or $\overline{{\Bbb C}P}^n$ $($see \cite[Page 426]{bp}$)$.   When $d=1$, all
nonzero mod $2$ Betti numbers of $F_{{\Bbb Z}_2^n}({\Bbb R}P^n,2)$ are
$$(b^{{\Bbb Z}_2}_0, b^{{\Bbb Z}_2}_1, ..., b^{{\Bbb Z}_2}_{n-2},
b^{{\Bbb Z}_2}_{n-1})=(1, 2, ..., n-1, {{3^{n+1}+2n-3}\over 4}).$$
When $d=2$, all nonzero Betti numbers of $F_{T^n}({\Bbb C}P^n,2)$
are
$$(b_0,b_1, ..., b_k, ..., b_{2n-2})$$
where $b_k=\begin{cases} {k\over 2}+1+f_{k-n+1} & \text{ if } 2|k \\
f_{k-n+1} & \text{ if } 2\nmid k\end{cases}$  and
$f_i=\sum_{s=0}^i{{n+1}\choose{s}}{{n-s-1}\choose{i-s}}.$
\end{prop}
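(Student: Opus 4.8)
The plan is to exploit Theorem~\ref{main re}, which replaces $F_{G_d^n}(M,2)$ by the equivariant strong deformation retract $X_d(M)$, together with the Mayer--Vietoris spectral sequence $E^r_{p,q}(K_{\Delta^n},d)$ of Theorem~\ref{mv-c}. First I would pin down the combinatorics: for $P=\Delta^n$ the facets are $F_0,\dots,F_n$, and two faces $G_1,G_2$ of $\Delta^n$ are disjoint exactly when their defining facet-index sets are complementary-avoiding, i.e. $G_1\cap G_2=\emptyset$ iff every facet contains $G_1$ or $G_2$ but not both in a way that leaves their intersection empty; concretely the maximal such products $G_1\times G_2$ are indexed by ordered pairs $(A,B)$ of disjoint nonempty subsets of $\{0,\dots,n\}$ with $A\cup B=\{0,\dots,n\}$ (for the top-dimensional pieces) and their faces. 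This identifies $K_{\Delta^n}$ and in particular lets me read off its cells $a$ and, for each, the face $F_a\times F_a'\subset\Delta^n\times\Delta^n$ whose preimage $X^d_a=(\pi_d^{-1})^{\times2}(F_a\times F_a')$ is itself a $G_d$-manifold over a product of simplices (hence a product of $\mathbb{R}P^j$'s, or of $\mathbb{C}P^j$'s / conjugates, when $d=1,2$ respectively). Its homology is then given by Davis--Januszkiewicz: Betti numbers equal to the $h$-vector of the product polytope, i.e. products of $\binom{j}{\cdot}$-type binomial coefficients.

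Next I would compute the $E^1$-page using~(\ref{equation}), $E^1_{p,q}(K_{\Delta^n},d)\cong\bigoplus_{\dim a=p}H_q(X^d_a)$, assembling the ranks from the previous step, and then compute $E^2$ by running the vertical differential $\mathbf{d}^{(d)}_1$, which is the alternating sum of the inclusion-induced maps $\ell_d\colon H_*(X^d_a)\to H_*(X^d_b)$ for $b$ a facet of $a$. Here the key structural input is Lemma~\ref{na-hom} / Theorem~\ref{betti}: for $d=1$ these inclusion maps on $\mathbb{Z}_2$-homology of (products of) $\mathbb{R}P^j$'s are the obvious "include a subpolytope's perfect cells" maps, so the chain complex $(E^1(K_{\Delta^n},1)\otimes\mathbb{Z}_2,\mathbf{d}^{(1)}_1)$ is completely combinatorial. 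For $d=2$, Theorem~\ref{eq-coh} tells me the spectral sequence collapses at $E^2$, and Theorem~\ref{betti} identifies $E^2_{p,q}(K_{\Delta^n},1)\otimes\mathbb{Z}_2\cong E^2_{p,2q}(K_{\Delta^n},2)\otimes\mathbb{Z}_2$, so the two cases are governed by the same chain complex with homological degrees doubled; I only need to do the homology computation once. Using Remark~\ref{hom-type}, $\mathcal{A}(\Delta^n,2)\simeq S^{n-1}$, which pins the top class $b^{\mathbb{Z}_2}_{n-1}$ (resp. $b_{2n-2}$, $b_{2n-1}$) as coming from that sphere together with the fiber contribution, and gives a strong consistency check on the final count.

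For the explicit numbers I would organize the bookkeeping by the bidegree $(p,q)$: the low Betti numbers $b^{\mathbb{Z}_2}_k=k+1$ for $k\le n-2$ (resp. the even/odd split $b_k=\frac{k}{2}+1+f_{k-n+1}$ or $f_{k-n+1}$ with $f_i=\sum_{s=0}^i\binom{n+1}{s}\binom{n-s-1}{i-s}$) should fall out of a generating-function identity once $E^2$ is known --- the $f_i$ are recognizable as the coefficients counting pairs (choice of $s$ shared facets, then a subpolytope of the remaining simplex), which is exactly the local structure of $X^d_a$ over products of simplices, so I expect the sum $\sum b_k t^k$ to factor as a product of $h$-polynomials of simplices summed over the cells of $K_{\Delta^n}$, collapsed via the acyclic-matching or direct-cancellation argument coming from $\mathbf{d}^{(d)}_1$. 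The main obstacle I anticipate is \emph{the $\mathbf{d}^{(d)}_1$ computation}: one must show that, away from the contributions forced by $\mathcal{A}(\Delta^n,2)\simeq S^{n-1}$ and the torus/real-torus fibers, the complex $(E^1,\mathbf{d}^{(d)}_1)$ is acyclic --- equivalently, exhibit a discrete Morse matching on $K_{\Delta^n}$ compatible with the Davis--Januszkiewicz perfect cell structures on all the $X^d_a$ simultaneously so that everything cancels except the asserted classes. Getting the closed form $\frac{3^{n+1}+2n-3}{4}$ for the top mod-$2$ Betti number of $F_{\mathbb{Z}_2^n}(\mathbb{R}P^n,2)$ will then be the residual arithmetic: it should emerge as $\chi$-type alternating sum, cross-checked against Theorem~\ref{main-1 thm} with $P=\Delta^n$, $k=2$, using $\mathbf{h}_{\Delta^n}(t)=1+t+\cdots+t^n$, so $\chi(F_{\mathbb{Z}_2^n}(\mathbb{R}P^n,2))=(-1)^{n}\big(\mathbf{h}_{\Delta^n}(-1)^2-\mathbf{h}_{\Delta^n}(-3)\big)$, which must agree with $\sum_k(-1)^k b_k$.
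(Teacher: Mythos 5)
Your overall framework (retract to $X_d(M)$, then run the Mayer--Vietoris spectral sequence with the pieces $(\pi_d^{-1})^{\times 2}(\sigma\times\overline{\sigma})$, which are products of projective spaces with Davis--Januszkiewicz perfect cell structures) is the same as the paper's, and your identification of the maximal disjoint pairs as complementary faces of $\Delta^n$ is correct. Two remarks on the $E^1/E^2$ computation: the paper does not work with the full complex $K_P$ but with the locally nice subcomplex $L_{\Delta^n}\cong\mathrm{sd}(\mathrm{Bd}(\Delta^n))$ (Definition~\ref{local}), whose simplices are chains $\sigma_{i_1}\subset\cdots\subset\sigma_{i_r}$; this makes $(E^1,\mathbf{d}_1)$ split, via Lemma~\ref{iso-k}, into the simplicial chain complexes of the subcomplexes $K^n_{i,j}$, whose homology is computed by induction (Proposition~\ref{hom}). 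Your hoped-for picture, that $(E^1,\mathbf{d}_1)$ is ``acyclic away from the asserted classes,'' is not what happens: by Corollary~\ref{e2-hom} the $E^2$-page has nonzero entries all along the column $p=0$ and all along the anti-diagonal $p+q=n-1$, and the top Betti numbers of the proposition are sums of several such entries, not a single surviving class. So a discrete Morse matching cancelling everything except the final answer is not the right mechanism, though a correct $\mathbf{d}_1$ computation by your route would still recover $E^2$.

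The genuine gap is the collapse of the spectral sequence at $E^2$ in the case $d=1$. For $d=2$ collapse is free (Theorem~\ref{eq-coh}, odd rows vanish), but for $d=1$ both $E^2_{r,\,n-1-r}$ and $E^2_{0,\,n-2}$ are nonzero, so there are potentially nontrivial higher differentials $\mathbf{d}_r\colon E^r_{r,\,n-1-r}\to E^r_{0,\,n-2}$ for $r\geq 2$; if any were nonzero, both $b^{\mathbb{Z}_2}_{n-2}$ and $b^{\mathbb{Z}_2}_{n-1}$ would drop below the asserted values. Your proposal never addresses this, and the consistency check you suggest cannot close it: such a differential lowers $b_{n-1}$ and $b_{n-2}$ by equal amounts, so the Euler characteristic from Theorem~\ref{main-1 thm} is unchanged and detects nothing. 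The paper needs a separate geometric argument here (Proposition~\ref{sm-sq}, Claim C): it regards $\Delta^n$ as a facet of $\Delta^{n+1}$, compares $X_1(M^n)\hookrightarrow X_1(M^{n+1})$ through the commutative diagrams relating the $E^1$-generators $(\beta^i\otimes\gamma^j)_a$ to classes in $H_{n-2}(X_1(M^{n+1});\mathbb{Z}_2)$, and deduces $\dim H_{n-2}(X_1(M^n);\mathbb{Z}_2)=n-1$, which forces all the dangerous differentials to vanish. Without this (or an equivalent) argument your computation only yields the $E^2$-page, not the stated mod $2$ Betti numbers of $F_{\mathbb{Z}_2^n}(\mathbb{R}P^n,2)$; in particular the closed form $\frac{3^{n+1}+2n-3}{4}$ is obtained only after collapse is established. (Also, for $d=2$ you should note that the integral statement uses that all $E^2$-terms are free abelian, which the paper gets from Proposition~\ref{hom}; the mod $2$ comparison of Theorem~\ref{betti} alone does not give integral Betti numbers.)
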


\subsection{The integral homology of $F_{G_d^2}(M,2)$}
Let $\pi_d: M\longrightarrow P(m)$ be a $2d$-dimensional
$G_d^2$-manifold over an $m$-polygon $P(m)$.
  Theorem~\ref{main re} tells us that $F_{G_d^2}(M,2)$ is homotopic to
$$X_d(P(m))=\begin{cases}
\bigcup_{F_i\cap F_j=\emptyset}(\pi_d^{-1})^{\times 2}(F_i\times F_j) & \text{ if } m>3\\
\bigcup_{v_i\cap F_j=\emptyset}(\pi_d^{-1})^{\times 2}(v_i\times
F_j)\bigcup\bigcup_{F_i\cap v_j=\emptyset}(\pi_d^{-1})^{\times
2}(F_i\times v_j) & \text{ if } m=3.
\end{cases}$$
In Example~\ref{dual decom}, we have given an analysis on the
structure of $K_{P(m)}$.
 For the convenience of calculation, as shown in Section~\ref{MV}, we can choose a
 locally nice subcomplex $L_{P(m)}$ of $K_{P(m)}$ in the use of the Mayer-Vietoris spectral sequence of $F_{G_d^2}(M,2)$ (for the notion of a locally nice subcomplex, see Defintion~\ref{local}).

  When $m\leq 5$, take $L_{P(m)}=K_{P(m)}$. When $m\geq 6$, we take $L_{P(m)}$ in such a way that $L_{P(m)}$ contains $\emptyset$ and all vertices of $K_{P(m)}$, and
$2m(m-4)$ 2-dimensional simplices of the following forms
$$\{F_i\times F_j, F_{i+1}\times F_j, F_{i+1}\times F_{j+1}\} \text{ and } \{F_i\times F_j, F_i\times F_{j+1},
F_{i+1}\times F_{j+1}\}.$$ In this case, we may check that
$L_{P(m)}$ is exactly an annulus, and it has $m(3m-11)$ 1-simplices.
For example, when $m=6$, $L_{P(6)}$ is a 2-dimensional simplicial
complex as shown in the following picture:
\[   \begin{picture}(0,0)%
\includegraphics{f6.pstex}%
\end{picture}%
\setlength{\unitlength}{1579sp}%
\begingroup\makeatletter\ifx\SetFigFont\undefined%
\gdef\SetFigFont#1#2#3#4#5{%
  \reset@font\fontsize{#1}{#2pt}%
  \fontfamily{#3}\fontseries{#4}\fontshape{#5}%
  \selectfont}%
\fi\endgroup%
\begin{picture}(12624,11106)(-311,-6274)
\put(1426,4664){\makebox(0,0)[lb]{\smash{{\SetFigFont{6}{7.2}{\rmdefault}{\mddefault}{\updefault}$F_3\times F_1$}}}}
\put(7201,-3136){\makebox(0,0)[lb]{\smash{{\SetFigFont{6}{7.2}{\rmdefault}{\mddefault}{\updefault}$F_1\times F_3$}}}}
\put(9751,1439){\makebox(0,0)[lb]{\smash{{\SetFigFont{6}{7.2}{\rmdefault}{\mddefault}{\updefault}$F_5\times F_2$}}}}
\put(9751,-2986){\makebox(0,0)[lb]{\smash{{\SetFigFont{6}{7.2}{\rmdefault}{\mddefault}{\updefault}$F_6\times F_3$}}}}
\put(9526,-6136){\makebox(0,0)[lb]{\smash{{\SetFigFont{6}{7.2}{\rmdefault}{\mddefault}{\updefault}$F_6\times F_4$}}}}
\put(3826,-736){\makebox(0,0)[lb]{\smash{{\SetFigFont{6}{7.2}{\rmdefault}{\mddefault}{\updefault}$F_3\times F_5$}}}}
\put(7201,1664){\makebox(0,0)[lb]{\smash{{\SetFigFont{6}{7.2}{\rmdefault}{\mddefault}{\updefault}$F_5\times F_1$}}}}
\put(5326,-5386){\makebox(0,0)[lb]{\smash{{\SetFigFont{6}{7.2}{\rmdefault}{\mddefault}{\updefault}$F_1\times F_4$}}}}
\put(  1,-736){\makebox(0,0)[lb]{\smash{{\SetFigFont{6}{7.2}{\rmdefault}{\mddefault}{\updefault}$F_2\times F_6$}}}}
\put(9451,4664){\makebox(0,0)[lb]{\smash{{\SetFigFont{6}{7.2}{\rmdefault}{\mddefault}{\updefault}$F_4\times F_2$}}}}
\put(1201,-2911){\makebox(0,0)[lb]{\smash{{\SetFigFont{6}{7.2}{\rmdefault}{\mddefault}{\updefault}$F_2\times F_5$}}}}
\put(6976,-736){\makebox(0,0)[lb]{\smash{{\SetFigFont{6}{7.2}{\rmdefault}{\mddefault}{\updefault}$F_6\times F_2$}}}}
\put(10801,-736){\makebox(0,0)[lb]{\smash{{\SetFigFont{6}{7.2}{\rmdefault}{\mddefault}{\updefault}$F_5\times F_3$}}}}
\put(5476,3914){\makebox(0,0)[lb]{\smash{{\SetFigFont{6}{7.2}{\rmdefault}{\mddefault}{\updefault}$F_4\times F_1$}}}}
\put(3676,1664){\makebox(0,0)[lb]{\smash{{\SetFigFont{6}{7.2}{\rmdefault}{\mddefault}{\updefault}$F_4\times F_6$}}}}
\put(3601,-3136){\makebox(0,0)[lb]{\smash{{\SetFigFont{6}{7.2}{\rmdefault}{\mddefault}{\updefault}$F_2\times F_4$}}}}
\put(1426,-6211){\makebox(0,0)[lb]{\smash{{\SetFigFont{6}{7.2}{\rmdefault}{\mddefault}{\updefault}$F_1\times F_5$}}}}
\put(1201,1439){\makebox(0,0)[lb]{\smash{{\SetFigFont{6}{7.2}{\rmdefault}{\mddefault}{\updefault}$F_3\times F_6$}}}}
\end{picture}%
\centering
   \]
   With the above arguments together, we have

   \begin{lem} \label{annulus}
   When $m\leq 4$, $L_{P(3)}$ is a 6-polygon and $L_{P(4)}$ is a 4-polygon. When $m\geq 5$, $L_{P(m)}$ is a triangulation of an annulus  with $m(m-3)$ vertices, $m(3m-11)$ 1-simplices and $2m(m-4)$ 2-simplices.
   \end{lem}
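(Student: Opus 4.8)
The cases $m=3,4$ need nothing new: Example~\ref{dual decom} already presents $K_{P(3)}$ as a hexagon and $K_{P(4)}$ as a $4$-gon, and $L_{P(m)}=K_{P(m)}$ when $m\le 5$. So assume $m\ge 5$. The plan is to coordinatize the vertex set of $L_{P(m)}$ by a lattice strip and then identify the whole complex with the standard ``staircase'' triangulation of a grid annulus. For the vertices: two facets $F_i,F_j$ of the $m$-gon $P(m)$ are disjoint precisely when $j\not\equiv i,\,i\pm1\pmod m$, so the vertices of $K_{P(m)}$ — all of which lie in $L_{P(m)}$ — are the pairs $F_i\times F_j$ with $d:=(j-i)\bmod m\in\{2,\dots,m-2\}$. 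There are $m(m-3)$ of them, and I would fix the bijection $F_i\times F_j\leftrightarrow(i,d)$ onto $(\mathbb{Z}/m)\times\{2,\dots,m-2\}$.

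Next I would identify the $2$-simplices. For $m\ge 6$ these are, by the very definition of $L_{P(m)}$, the two families $\{F_i\times F_j,\,F_{i+1}\times F_j,\,F_{i+1}\times F_{j+1}\}$ and $\{F_i\times F_j,\,F_i\times F_{j+1},\,F_{i+1}\times F_{j+1}\}$ together with their faces; for $m=5$ one has $L_{P(5)}=K_{P(5)}$, and since a triple of vertices spans a simplex of $K_P$ exactly when the corresponding triple intersection of faces of $P^{\times 2}$ is nonempty, one checks directly that the $2$-simplices of $K_{P(5)}$ are again just these two families. In the coordinates above the families read
$$\{(i,d),(i+1,d-1),(i+1,d)\}\qquad\text{and}\qquad\{(i,d),(i,d+1),(i+1,d)\},$$
and requiring all three entries to be genuine vertices (i.e.\ each of the values $d$, $d-1$, $d+1$ that actually occurs must lie in $\{2,\dots,m-2\}$) forces $d\in\{3,\dots,m-2\}$ in the first family and $d\in\{2,\dots,m-3\}$ in the second; nonemptiness of the triple intersection is automatic because $F_i\cap F_{i+1}$ and $F_j\cap F_{j+1}$ are vertices of $P(m)$. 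Reassembling, the unit cell $[i,i+1]\times[d,d+1]$ of the strip $(\mathbb{Z}/m)\times\{2,\dots,m-2\}$ (with $i\in\mathbb{Z}/m$ and $2\le d\le m-3$) contains exactly one triangle from each family, namely its two halves along the diagonal from $(i+1,d)$ to $(i,d+1)$. Thus $L_{P(m)}$ is precisely the standard diagonal triangulation of $(\mathbb{Z}/m)\times\{2,\dots,m-2\}$, which for $m\ge 5$ — at least three vertices in the circle direction and at least one layer of cells — is a genuine simplicial triangulation of the annulus $S^1\times[0,1]$.

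Finally the $f$-vector falls out of this picture: there are $m(m-3)$ vertices; $m$ choices of $i$ and $m-4$ admissible cell indices $d$ give $m(m-4)$ unit cells, hence $2m(m-4)$ triangles; and since an annulus has Euler characteristic $0$, the number of edges is $m(m-3)+2m(m-4)=m(3m-11)$ (equivalently, counting by type: $m(m-3)$ edges ``$d$ constant'', $m(m-4)$ edges ``$i$ constant'', and $m(m-4)$ cell diagonals). I expect the only real work to be the bookkeeping in the second paragraph: carefully matching the cyclic index conventions ($F_{m+1}=F_1$, and so on) and the vertex-validity conditions of the two triangle families against the unit cells of the grid annulus, together with treating $m=5$ separately, since it lies outside the ``two families'' prescription and has to be extracted directly from the structure of $K_{P(5)}$ recorded in Example~\ref{dual decom}.
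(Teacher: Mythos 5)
Your argument is correct and amounts to carrying out in detail the verification the paper leaves implicit: the paper merely defines $L_{P(m)}$ (taking $L_{P(m)}=K_{P(m)}$ for $m\le 5$ and listing the two families of $2$-simplices for $m\ge 6$), asserts ``we may check'' that it is an annulus with the stated face numbers, and illustrates the case $m=6$ by a picture. Your coordinatization $F_i\times F_j\mapsto (i,(j-i)\bmod m)$, the identification with the staircase triangulation of the grid annulus, and the separate direct check that $K_{P(5)}$ has only those two families of $2$-simplices supply exactly that verification, so this is the same approach, just made explicit.
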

In the following discussion, by $E^1_{p,q}(L_{P(m)}, d), ..., E_{p,q}^\infty(L_{P(m)}, d)$ we denote the Mayer-Vietoris spectral sequence determined by $X_d(P(m))$ with $L_{P(m)}$ together.

Now, to complete the proof of Proposition~\ref{b1}, it suffices to show the following result.

\begin{prop}\label{dim 2}
$X_d(P(3))$ is a d-dimensional connected CW complex whose nonzero Betti numbers are
$(b_0, b_1)=(1, 7)$ if $d=1$ and $(b_0,
 b_1, b_2)=(1, 1, 6)$ if $d=2$. When $m\geq 4$, $X_d(P(m))$ is a 2d-dimensional
connected CW complex whose nonzero Betti numbers are $(b_0, b_1, b_2)=(1, 2m+1,
m(m-3))$ if $d=1$ and  $(b_0, b_1, b_2, b_4)=(1,1, 2m, m(m-3)))$ if $d=2$.
\end{prop}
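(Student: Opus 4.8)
The plan is to pass, via Theorem~\ref{main re}, to the deformation retract $X_d(P(m))$ of $F_{G_d^2}(M,2)$, and to compute $H_*(X_d(P(m)))$ with the Mayer--Vietoris spectral sequence $\{E^r_{p,q}(L_{P(m)},d)\}$ of $X_d(P(m))$ together with the locally nice subcomplex $L_{P(m)}$; by Theorem~\ref{mv-c} it converges to $H_*(X_d(P(m)))$. For $m\ge 4$ I take $L_{P(m)}$ as in Lemma~\ref{annulus} (so $L_{P(4)}$ is a $4$-gon, $L_{P(5)}=K_{P(5)}$, and $L_{P(m)}$ is the chosen triangulated annulus for $m\ge 6$), and for $m=3$ I take the hexagon $K_{P(3)}$ of Example~\ref{dual decom}. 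The first step is to make the $E^1$-page of (\ref{equation}) completely explicit.

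Every simplex $a$ of $L_{P(m)}$ corresponds to an intersection of faces $F_i\times F_j$ of $P(m)\times P(m)$, which is again a face $F\times F'$, so $X^d_a=\pi_d^{-1}(F)\times\pi_d^{-1}(F')$ is a product of (possibly trivial) small covers resp.\ quasi-toric manifolds over faces of $P(m)$. Concretely, for $m\ge 4$ the vertices of $L_{P(m)}$ give $\pi_d^{-1}(F_i)\times\pi_d^{-1}(F_j)$, which is $T^2$ ($d=1$) or $S^2\times S^2$ ($d=2$); the edges give either $\pi_d^{-1}(v)\times\pi_d^{-1}(F_j)$ (a ``horizontal/vertical'' edge, $X^d_a\cong S^d$) or $\pi_d^{-1}(v)\times\pi_d^{-1}(v')$ (a ``diagonal'' edge, $X^d_a\cong *$); and the $2$-simplices give a point. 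For $m=3$ the six vertices of $K_{P(3)}$ give $\pi_d^{-1}(v)\times\pi_d^{-1}(F)\cong S^d$ and the six edges give points. In particular $X_d(P(m))$ has the stated dimension ($d$ if $m=3$, $2d$ if $m\ge 4$), every $E^1_{p,q}(L_{P(m)},d)$ is free abelian of explicitly known rank, and it is nonzero only for $q\in\{0,d\}$ ($m=3$) resp.\ $q\in\{0,d,\dots,2d\}$ ($m\ge 4$). The row $q=0$ is exactly the simplicial chain complex of $L_{P(m)}$ (all $X^d_a$ are connected), so $E^2_{p,0}(L_{P(m)},d)=H_p(L_{P(m)})$; since $L_{P(m)}$ is a hexagon, a square, or an annulus, $H_*(L_{P(m)})\cong H_*(S^1)$, in line with Remark~\ref{hom-type}. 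This already yields $b_0=1$ and the summand $\mathbb{Z}=E^2_{1,0}$ of $H_1$.

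Next I compute $d_1$ on the remaining rows. Since each inclusion $X^d_a\hookrightarrow X^d_b$ ($b$ a facet of $a$) is, on each tensor factor, either the identity of some $\pi_d^{-1}(F)$ or the inclusion of a point, the perfect cell description of \cite[Theorem~3.1]{dj} (cf.\ Lemma~\ref{na-hom}) shows that on homology it is an isomorphism or zero; hence $d_1$ on these rows is an explicit $\{0,\pm1\}$-matrix. Splitting $H_d(\pi_d^{-1}(F_i)\times\pi_d^{-1}(F_j))$ into its $F_i$- and $F_j$-factor summands, the horizontal edges map into the $F_j$-summands and the vertical edges into the $F_i$-summands, so $d_1$ on the $q=d$ row splits as $\partial_H\oplus\partial_V$, where $\partial_H$ (resp.\ $\partial_V$) is the simplicial boundary of the subgraph of $L_{P(m)}$ spanned by the horizontal (resp.\ vertical) edges. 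For fixed $j$ the facets $F_i$ with $F_i\cap F_j=\emptyset$ are $m-3$ \emph{consecutive} facets, so this subgraph is a disjoint union of $m$ arcs, hence a forest on all $m(m-3)$ vertices; therefore $\partial_H$ and $\partial_V$ are injective with images that are direct summands, so $E^2_{1,d}=0$ and $E^2_{0,d}$ is free of rank $2m(m-3)-2m(m-4)=2m$ (for $m=3$ the edges contribute nothing, giving $E^2_{0,d}=\mathbb{Z}^6$). The top row $q=2d$ (only for $d=2$) carries no $d_1$, so $E^2_{0,2d}=\mathbb{Z}^{m(m-3)}$, and all other bidegrees vanish. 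Thus $E^2$ is concentrated in the asserted bidegrees and all its entries are free abelian.

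Finally, the spectral sequence collapses at $E^2$: for $d=2$ this is Theorem~\ref{eq-coh}, and for $d=1$ it is immediate since all surviving $E^2$-entries lie in total degree $\le 2$ and no $d_r$ with $r\ge 2$ can connect two of them. Reading off $\bigoplus_{p+q=i}E^\infty_{p,q}$ then gives $(b_0,b_1)=(1,7)$ resp.\ $(b_0,b_1,b_2)=(1,1,6)$ when $m=3$, and $(b_0,b_1,b_2)=(1,2m+1,m(m-3))$ resp.\ $(b_0,b_1,b_2,b_4)=(1,1,2m,m(m-3))$ when $m\ge 4$; since every $E^\infty_{p,q}$ is free abelian, so is $H_*(X_d(P(m)))=H_*(F_{G_d^2}(M,2))$. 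As a consistency check, Theorem~\ref{main thm} applied to ${\bf h}_{P(m)}(t)=t^2+(m-2)t+1$ gives $\chi(F_{\mathbb{Z}_2^2}(M,2))=\big({\bf h}_{P(m)}(-1)\big)^2-{\bf h}_{P(m)}(-3)=m(m-5)$ and $\chi(F_{T^2}(M,2))=\chi(M)^2-\chi(M)=m(m-1)$, both matching the alternating sums above, and the isomorphism $E^2_{p,q}(\cdot,1)\otimes\mathbb{Z}_2\cong E^2_{p,2q}(\cdot,2)\otimes\mathbb{Z}_2$ of Theorem~\ref{betti} is visibly respected. The hard part will be the third paragraph: identifying the horizontal/vertical edge subcomplexes of $L_{P(m)}$ with the claimed forests and verifying that no unexpected relations occur among the sphere classes, so that $d_1$ has both the stated rank \emph{and} a torsion-free cokernel; the explicit combinatorics of $L_{P(m)}$ from Lemma~\ref{annulus} and Example~\ref{dual decom} is exactly what makes this manageable.
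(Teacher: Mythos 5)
Your proposal is correct and follows essentially the same route as the paper: deformation retract to $X_d(P(m))$ via Theorem~\ref{main re}, run the Mayer--Vietoris spectral sequence over the locally nice complex $L_{P(m)}$ of Lemma~\ref{annulus} (respectively $K_{P(3)}$, $K_{P(4)}$, $K_{P(5)}$), and note collapse at $E^2$; your splitting of the $q=d$ differential into horizontal and vertical forest boundary maps is just a more explicit form of the paper's identification of that row with the chain complex of $2m$ disjoint segments. The only blemish is the parenthetical ``only for $d=2$'' attached to the top row $q=2d$: that row (namely $q=2$) is equally present and needed when $d=1$, as your own read-off of $b_2=m(m-3)$ shows.
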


\begin{proof} Each vertex of $L_{P(m)}$ is of the form $F_i\times F_j$ if $m>3$, and of the form $v_i\times F_j$ or $F_i\times v_j$ if $m=3$. Since each $\pi_d^{-1}(F_i)$ is $S^1$ if $d=1$ and  $S^2$ if $d=2$,  and since $\pi_d^{-1}(v_i)$ is a point, we have that $(\pi_d^{-1})^{\times 2}(F_i\times F_j)$ is $S^1\times S^1$ if $d=1$ and  $S^2\times S^2$ if $d=2$, and $(\pi_d^{-1})^{\times 2}(v_i\times F_j)$ (or $(\pi_d^{-1})^{\times 2}(F_i\times v_j)$) is also $S^1$ if $d=1$ and  $S^2$ if $d=2$. Also, for all $m\geq 3$,  $L_{P(m)}$ is connected.
Thus, $X_d(P(3))$ is a d-dimensional connected CW complex and when $m\geq
4$, $X_d(P(m))$ is a 2d-dimensional connected CW complex.

Now we first have   that $E_{p,0}^2(L_{P(m)}, d)=H_p(S^1)$ by Remark~\ref{complex} and Lemma~\ref{annulus}, so
$E_{p,0}^2(L_{P(m)},d)=0$ if $p>1$ and $E^2_{0,0}(L_{P(m)},d)\cong E^2_{1,0}(L_{P(m)},d)\cong {\Bbb Z}$.

If $m=3$, by a direct calculation we have that  $E^1_{p,dq}(L_{P(3)},d)=0$ for $p>0$
and $q>1$, and $E^1_{0,d}(L_{P(3)},d)\cong {\Bbb Z}^6$. Thus  we have that
$$E^2_{p,dq}(L_{P(3)},d)=
\begin{cases}
0 & \text{if either $p>1$ and $q=0$ or $p>0$ and $q>1$}\\
{\Bbb Z} & \text{if $p\leq 1$ and $q=0$}\\
{\Bbb Z}^6 & \text{if $p=0$ and $q=1$}
\end{cases}
$$
so $E_{p,dq}^\infty(L_{P(3)},d)\cong E_{p,dq}^2(L_{P(3)},d)$.

 If $m>3$, then we have that $E^1_{p, dq}(L_{P(m)},d)=0$ for either $p>1$ and $q>2$ or $p>0$ and $q=2$.
 By a direct calculation, we obtain that $E^2_{0,2d}(L_{P(m)},d)\cong {\Bbb Z}^{m(m-3)}$,
 $E^1_{0,d}(L_{P(m)},d)\cong {\Bbb Z}^{2m(m-3)}$ and $E^1_{1,d}(L_{P(m)},d)\cong {\Bbb Z}^{2m(m-4)}$. Note that   ${\Bbb Z}^{2m(m-4)}$ means the trivial group 0 when $m=4$,  so $E^2_{1,d}(L_{P(4)},d)=0$ and $E^2_{0,d}(L_{P(4)},d)\cong {\Bbb Z}^8$.
 When $m>4$, we can obtain that
 $$0\longrightarrow E^1_{1,d}(L_{P(m)},d)\longrightarrow E^1_{0,d}(L_{P(m)},d)\longrightarrow 0$$
 is isomorphic to the chain complex of the simplicial complex given by the disjoint union of $2m$ copies of a segment, so
 $E^2_{1,d}(L_{P(m)},d)=0$ and $E^2_{0,d}(L_{P(m)},d)\cong {\Bbb Z}^{2m}$. Thus, if $m>3$,
 $$E^2_{p,dq}(L_{P(m)},d)=
\begin{cases}
0 & \text{if either $p>1$ and $q=0$ or $p>0$ and $q>2$}\\
{\Bbb Z} & \text{if $p\leq 1$ and $q=0$}\\
{\Bbb Z}^{2m} & \text{if $p=0$ and $q=1$}\\
{\Bbb Z}^{m(m-3)} & \text{if $p=0$ and $q=2$}
\end{cases}
$$
which implies that
$E_{p,dq}^\infty(L_{P(m)},d)\cong E_{p,dq}^2(L_{P(m)},d)$.

 Finally,  the desired Betti numbers for $X_d(P(m))$ can be read out from the expression of $E^2_{p,dq}(L_{P(m)},d)$.
\end{proof}

\begin{rem}
We actually calculate the integral homology of $F_{({\Bbb
Z}_2)^2}(M^2,2)$, and in particular, the  Mayer--Vietoris spectral
sequence of $F_{({\Bbb Z}_2)^2}(M^2,2)$ collapses at the $E^2$-term.
\end{rem}

\subsection{The case in which $P$ is an $n$-simplex $\Delta^n$ with $n>1$}
Let $\text{sd}(\text{Bd}(\Delta^{n}))$ be the barycentric subdivision of the boundary complex of $\Delta^n$, which is an $(n-1)$-dimensional simplicial complex. Each simplex of $\text{sd}(\text{Bd}(\Delta^{n}))$ will be expressed as the form
$\sigma_1\subset \sigma_2
\subset\cdots\subset \sigma_l$ where $\sigma_i$ is a face of $\text{Bd}(\Delta^{n})$ (so $0\leq \dim \sigma_i\leq n-1$), and is understood as a vertex of $\text{sd}(\text{Bd}(\Delta^{n}))$.
Let $i, j$ be non-negative integers with $
 i+j+1\leq n$. By $K_{i,j}^n$ we denote the  subcomplex of $\text{sd}(\text{Bd}(\Delta^{n}))$, formed by those simplices  $\{\sigma_1\subset \sigma_2
\subset\cdots\subset \sigma_l\mid \dim\sigma_1\geq
i,\dim\sigma_l\leq n-j-1\}$. Then we known easily that $K_{i,j}^n$
has the following properties:
 \begin{enumerate}
\item[$\bullet$]
$K_{i,j}^n$ is $(n-i-j-1)$-dimensional.
\item[$\bullet$] $K_{i,j}^n$ is connected if $n>i+j+1$.
\item[$\bullet$] For $i'\leq i$ and $j'\leq j$, $K^n_{i,j}\subseteq K^n_{i',j'}$.
\item[$\bullet$] $K_{0,j}^n$ is the barycentric subdivision of the $(n-j-1)$-dimensional skeleton of $\text{Bd}(\Delta^n)$. In particular, $K_{0,0}^n=\text{Bd}(\Delta^n)$.
\end{enumerate}

\begin{lem}\label{p1}
 $K_{i,j}^n$ is combinatorially equivalent to $K_{j,i}^n$.
\end{lem}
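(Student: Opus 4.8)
The plan is to produce the combinatorial equivalence from the complementation duality on the vertex set of $\Delta^n$. Write $V=\{0,1,\dots,n\}$ for the vertex set of $\Delta^n$, so the faces of $\text{Bd}(\Delta^n)$ are exactly the proper nonempty subsets $\sigma\subsetneq V$ and $\dim\sigma=|\sigma|-1$. The assignment $\sigma\mapsto\sigma^{c}:=V\setminus\sigma$ is an order-reversing involution on the poset of proper nonempty subsets of $V$, and it sends a face of dimension $k$ to a face of dimension $n-k-1$; geometrically $\sigma^{c}$ is the face of $\Delta^n$ opposite to $\sigma$.

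First I would use this duality to build a simplicial automorphism $\Psi$ of $\text{sd}(\text{Bd}(\Delta^n))$. A simplex of $\text{sd}(\text{Bd}(\Delta^n))$ is a chain $\sigma_1\subset\sigma_2\subset\cdots\subset\sigma_l$ of faces of $\text{Bd}(\Delta^n)$; define $\Psi$ on vertices by $\sigma\mapsto\sigma^{c}$ and on a chain by $\Psi(\sigma_1\subset\cdots\subset\sigma_l)=(\sigma_l^{c}\subset\cdots\subset\sigma_1^{c})$. Since complementation reverses inclusions, the image is again a chain, so $\Psi$ is a well-defined simplicial map; as $\Psi^2=\id$, it is in fact a simplicial automorphism of $\text{sd}(\text{Bd}(\Delta^n))$.

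Then I would track dimensions to see that $\Psi$ carries $K_{i,j}^{n}$ onto $K_{j,i}^{n}$. By definition a chain $\sigma_1\subset\cdots\subset\sigma_l$ lies in $K_{i,j}^{n}$ iff $\dim\sigma_1\ge i$ and $\dim\sigma_l\le n-j-1$. Applying $\Psi$, the smallest face of the image chain is $\sigma_l^{c}$ with $\dim\sigma_l^{c}=n-\dim\sigma_l-1\ge j$, and its largest face is $\sigma_1^{c}$ with $\dim\sigma_1^{c}=n-\dim\sigma_1-1\le n-i-1$; these are exactly the inequalities defining $K_{j,i}^{n}$. Hence $\Psi(K_{i,j}^{n})\subseteq K_{j,i}^{n}$, and by the same argument $\Psi(K_{j,i}^{n})\subseteq K_{i,j}^{n}$; since $\Psi^2=\id$, both inclusions are equalities, so $\Psi$ restricts to a simplicial isomorphism $K_{i,j}^{n}\to K_{j,i}^{n}$.

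There is no genuinely hard step here; the only point needing care is the bookkeeping that complementation reverses the order of a chain and interchanges its two dimension bounds correctly — so that, in particular, the standing hypothesis $i+j+1\le n$ keeps both complexes nonempty and the statement is not vacuous — which is precisely the computation above.
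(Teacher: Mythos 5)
Your proof is correct and follows the same route as the paper: the complementation map $\sigma\mapsto\overline{\sigma}$ on faces of $\mathrm{Bd}(\Delta^n)$, applied to chains in reverse order, is exactly the paper's combinatorial equivalence, and your dimension bookkeeping just makes explicit what the paper leaves implicit.
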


\begin{proof}
This follows by mapping each vertex $\sigma$
of $K_{i,j}^n$ to the vertex
$\overline{\sigma}$ of $K_{j,i}^n$ and each simplex $\sigma_1 \subset \sigma_2
\subset\cdots\subset \sigma_l$ to $\overline{\sigma_l} \subset\cdots
\subset \overline{\sigma_2} \subset \overline{\sigma_1}$, where the
$\overline{\sigma}$ means the complement of $\sigma$ in the boundary complex
$\text{Bd}(\Delta^n)$ of $\Delta^n$, i.e., $\overline{\sigma}$ is the face of $\Delta^n$, determined by those vertices which are not contained in $\sigma$.
\end{proof}

\begin{prop} \label{hom}
The homology groups $H_r(K_{i,j}^n)$  of $K_{i,j}^n$ are free
abelian. When $n=i+j+1$,
$$b_r(K_{i,j}^n)=\begin{cases}0 & \text{ if }r\neq 0 \\{{n+1}\choose{i+1}}& \text{ if }r=0. \end{cases} $$
When $n>i+j+1$,
 $$b_r(K_{i,j}^n)=\begin{cases}\sum_{s=0}^j(-1)^{s+j}{{n+1}\choose{s}}{{n-s}\choose{n-i-s}} &\text{ if }
  r=n-i-j-1\\ 1 &\text{ if } r=0\\ 0 &\text{ otherwise } \end{cases}$$
where $b_r(K_{i,j}^n)$ is the $r$-th Betti number of $K_{i,j}^n$.
\end{prop}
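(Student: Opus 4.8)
\emph{Plan.} The first move is to realize $K_{i,j}^n$ as an order complex. A simplex of $K_{i,j}^n$ is a flag $\sigma_1\subset\cdots\subset\sigma_l$ of faces of $\text{Bd}(\Delta^n)$ with $i\le\dim\sigma_1$ and $\dim\sigma_l\le n-j-1$, so $K_{i,j}^n$ is the full subcomplex of $\text{sd}(\text{Bd}(\Delta^n))$ spanned by the faces of $\Delta^n$ of dimension in $[i,\,n-j-1]$; since a face of dimension $d$ corresponds to a subset of the vertex set $[n+1]$ of size $d+1$, this is the order complex of those subsets of $[n+1]$ whose cardinality lies in $[i+1,\,n-j]$. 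Its longest flag has $n-i-j$ terms, hence $\dim K_{i,j}^n=n-i-j-1$; consequently $H_{n-i-j-1}(K_{i,j}^n)$, being a subgroup of the (free) group of top simplicial chains, is automatically free abelian, and $H_0$ is free for the usual reason. If $n=i+j+1$ the complex is $0$-dimensional with vertex set the $\binom{n+1}{i+1}$ faces of dimension $i$, which settles that case; so from now on assume $n>i+j+1$, where the goal is to show $\widetilde H_r(K_{i,j}^n)=0$ for $r\ne n-i-j-1$ and to pin down the rank of $\widetilde H_{n-i-j-1}$.

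I would run an induction on $j$. \emph{Base case $j=0$}: by Lemma~\ref{p1}, $K_{i,0}^n\cong K_{0,i}^n$, and $K_{0,i}^n$ is by construction the barycentric subdivision of the $(n-i-1)$-skeleton of $\text{Bd}(\Delta^n)$; as $n-i-1<n$ this is the $(n-i-1)$-skeleton of $\Delta^n$, which is homotopy equivalent to a wedge of $\binom{n}{n-i}=\binom ni$ spheres $S^{n-i-1}$ (so $b_0=1$, $b_{n-i-1}=\binom ni$ and $b_r=0$ otherwise) --- the asserted formula at $j=0$. \emph{Inductive step}: one has $K_{i,j}^n\subset K_{i,j-1}^n$, and $K_{i,j-1}^n$ is obtained from $K_{i,j}^n$ by attaching the open stars of its vertices $\sigma$ of dimension $n-j$. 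A simplex of $K_{i,j-1}^n\setminus K_{i,j}^n$ is a flag containing exactly one face of dimension $n-j$, so these open stars are pairwise disjoint and their union is $K_{i,j-1}^n\setminus K_{i,j}^n$; there are $\binom{n+1}{n-j+1}$ of them, and for each, $\mathrm{lk}_{K_{i,j-1}^n}\sigma$ is the order complex of the proper faces of $\sigma\cong\Delta^{n-j}$ of dimension $\ge i$, that is, a copy of $K_{i,0}^{\,n-j}$. Disjointness of the open stars splits the relative simplicial chain complex, so $H_r(K_{i,j-1}^n,K_{i,j}^n)\cong\bigoplus_{\dim\sigma=n-j}\widetilde H_{r-1}(K_{i,0}^{\,n-j})$.

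Now feed this into the long exact sequence of the pair $(K_{i,j-1}^n,K_{i,j}^n)$. By the base case, $\widetilde H_*(K_{i,0}^{\,n-j})$ is free, concentrated in degree $(n-j)-i-1=n-i-j-1$ with rank $\binom{n-j}{i}$, and $\widetilde H_0(K_{i,0}^{\,n-j})=0$ because $n-j>i+1$; by the induction hypothesis, $\widetilde H_*(K_{i,j-1}^n)$ is free, concentrated in degree $n-i-j$, with $\widetilde H_0=0$ (here $n>i+j+1$ keeps $K_{i,j-1}^n$ connected), and top rank equal to the formula at $j-1$. Using also $\widetilde H_r(K_{i,j}^n)=0$ for $r>n-i-j-1$ (dimension), the long exact sequence collapses: $\widetilde H_r(K_{i,j}^n)=0$ for every $r\ne n-i-j-1$, and one is left with
$$0\To\widetilde H_{n-i-j}(K_{i,j-1}^n)\To\bigoplus_{\dim\sigma=n-j}\widetilde H_{n-i-j-1}(K_{i,0}^{\,n-j})\To\widetilde H_{n-i-j-1}(K_{i,j}^n)\To 0.$$
All three groups are free, so comparing ranks yields $b_{n-i-j-1}(K_{i,j}^n)=\binom{n+1}{n-j+1}\binom{n-j}{i}-b_{n-i-j}(K_{i,j-1}^n)$; since $\binom{n+1}{n-j+1}=\binom{n+1}{j}$ and $\binom{n-j}{i}=\binom{n-j}{n-i-j}$, inserting the formula at $j-1$ telescopes the right-hand side into $\sum_{s=0}^{j}(-1)^{s+j}\binom{n+1}{s}\binom{n-s}{n-i-s}$, which closes the induction.

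The routine parts are the closing binomial bookkeeping and the classical identification of the skeleton's homotopy type; the step demanding genuine care is the geometry of the inductive step --- checking that $K_{i,j-1}^n$ really is $K_{i,j}^n$ with the \emph{disjoint} open stars of its top-rank vertices glued on, that each link is precisely $K_{i,0}^{\,n-j}$, and that the edge cases are benign (the hypothesis $n>i+j+1$ is exactly what forces every link to be connected, so that no spurious $\widetilde H_0$ enters the long exact sequence). A shorter but less self-contained alternative is to observe that $K_{i,j}^n$ is the order complex of a rank-selected interval of the Boolean lattice $B_{n+1}$, hence Cohen--Macaulay and in fact shellable, so homotopy equivalent to a wedge of $(n-i-j-1)$-spheres, and then read off the number of spheres as $(-1)^{n-i-j-1}$ times the reduced Euler characteristic (the M\"obius number of that interval).
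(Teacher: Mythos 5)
Your proof is correct and follows essentially the same route as the paper: your pair $(K^n_{i,j-1},K^n_{i,j})$ is the paper's pair $(K^n_{i,j},K^n_{i,j+1})$ up to an index shift, your disjoint-open-star/link computation of the relative homology (with links $\cong K^{\,n-j}_{i,0}$, identified via Lemma~\ref{p1} with a skeleton of a simplex) is exactly the paper's Claim~B, and both conclude by the long exact sequence and induction. Your write-up merely makes explicit what the paper leaves as "an induction on $i,j$," including the base case $j=0$ and the telescoping of the binomial ranks.
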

\begin{proof}
When $n=i+j+1$, $K_{i,j}^n$ is a 0-dimensional complex with
${{n+1}\choose{i+1}}$  simplices of dimension $i$ in
$\text{Bd}(\Delta^{n})$ as its vertices, so $b_r(K_{i,j}^n)=0$ if
$r\neq 0$ and $b_0(K_{i,j}^n)={{n+1}\choose{i+1}}$. Of course, in
this case, the homology group of $K_{i,j}^n$ has no torsion.

Now suppose that $n>i+j+1$. Given a simplex $\sigma_1 \subset
\sigma_2 \subset\cdots\subset \sigma_l$ in $K_{i,j}^n$, if
$\dim\sigma_l<n-j-1$, then it belongs to $K_{i,j+1}^n$. If
$\dim\sigma_l=n-j-1$, then  this simplex $\sigma_1 \subset \sigma_2
\subset\cdots\subset \sigma_l$ belongs to the following subcomplex
of $K_{i,j}^n$
$$\bigcup_{\sigma_r\in K_{i,j}^n
\atop \dim\sigma_r=n-j-1}\overline{\text{St}(\sigma_r, K_{i,j}^n)}$$
so \begin{equation}\label{e11} K_{i,j}^n=K_{i,j+1}^n\bigcup
\bigcup_{\sigma_r\in K_{i,j}^n \atop
\dim\sigma_r=n-j-1}\overline{\text{St}(\sigma_r, K_{i,j}^n)}.
 \end{equation}
 For each $\sigma_r\in K_{i,j}^n$ with $\dim\sigma_r=n-j-1$, we have that $$\text{Lk}(\sigma_r, K_{i,j}^n)=\{\sigma_1 \subset
\sigma_2 \subset\cdots\subset \sigma_s\mid \dim\sigma_1\geq i,
\sigma_s\subsetneq \sigma_r\}$$ and obviously it is isomorphic to
the subcomplex $K_{i,0}^{n-j-1}$ of
$\text{sd}(\text{Bd}(\sigma_l))$. Thus, we conclude that
 \begin{equation}\label{e2}
 \overline{\text{St}(\sigma_r, K_{i,j}^n)}\bigcap K_{i,j+1}^n =\text{Lk}(\sigma_r, K_{i,j}^n)= K_{i,0}^{n-j-1}.
 \end{equation}

\noindent {\bf Claim B. } {\em For $n>i+j+1$ and each integer $s$, the relative homology
group  $H_s(K_{i,j}^n,K_{i,j+1}^n)$ is a free abelian, and its Betti number
$$b_s(K_{i,j}^n,K_{i,j+1}^n)=\begin{cases}
{{n+1}\choose{j+1}}{{n-j-1}\choose{n-i-j-1}} &\text{ if } s=n-i-j-1
\\0 & \text{ otherwise. }\end{cases}$$
 }

Using the axiom of excision and (\ref{e11})--(\ref{e2}), we see that
$$H_s(K_{i,j}^n,K_{i,j+1}^n)\cong \bigoplus_{\sigma_r\in K_{i,j}^n
\atop \dim\sigma_r=n-j-1} H_s(\overline{\text{St}(\sigma_r,
K_{i,j}^n)},\text{Lk}(\sigma_r, K_{i,j}^n)).$$ For $s\geq 1$, we
have
$$H_s(\overline{\text{St}(\sigma_r, K_{i,j}^n)},\text{Lk}(\sigma_r,
K_{i,j}^n))\cong H_{s-1}(\text{Lk}(\sigma_r, K_{i,j}^n)) =
H_{s-1}(K_{i,0}^{n-j-1}).$$ Since $K_{i,0}^{n-j-1}$ has the same
homology as the $(n-i-j-2)$-skeleton of the boundary complex
$\text{Bd}(\sigma_r)$ of $\sigma_r$, we have that the reduced
homology group $\widetilde{H}_{s-1}(K_{i,0}^{n-j-1})$ vanishes if
$s-1<n-i-j-2$ and the top Betti number
$b_{n-i-j-2}(K_{i,0}^{n-j-1})={{n-j-1}\choose{n-i-j-1}}$, so Claim B
follows from this.

We note that $K^n_{0,0}$ is just $\text{sd}(\text{Bd}(\Delta^n))$,
so $H_s(K^n_{0,0})=0$ if $s\neq 0,  n-1$.  Consider the long exact
sequence
$$\cdots \rightarrow \widetilde{H}_s(K_{i,j+1}^n)\rightarrow \widetilde{H}_s(K_{i,j}^n)\rightarrow
H_s(K_{i,j}^n,K_{i,j+1}^n)\rightarrow
\widetilde{H}_{s-1}(K_{i,j+1}^n)\rightarrow\cdots.$$ Moreover, using
an induction on $i,j$ and Claim B, we may easily obtain the required
result.
\end{proof}

Now let $\pi_d: M^{dn}\longrightarrow \Delta^n$ be the
$G_d^n$-manifold over an $n$-dimensional simplex $\Delta^n$. Then we
know from Theorem~\ref{main re} that $F_{G_d^n}(M^{dn},2)$ is homotopic
to
$$X_d(M^{dn})=\bigcup_{\sigma_i\in
\mathcal{F}(\Delta^n)}(\pi_d^{-1})^{\times 2}(\sigma_i\times
\overline{\sigma_i}).$$ Obviously, $X_d(M^{dn})$ is a
$d(n-1)$-dimensional CW complex. In order to apply the theory of
Mayer-Vietoris spectral sequence developed in Section~\ref{MV}, we
choose a locally nice complex $L_{\Delta^n}$ in such a way that the
vertex set of $L_{\Delta^n}$ consists of all $(\pi_d^{-1})^{\times
2}(\sigma_i\times \overline{\sigma_i})$ where $\sigma_i\in
\mathcal{F}(\Delta^n)$, and each oriented simplex of $L_{\Delta^n}$
is of the form
$$[(\pi_d^{-1})^{\times
2}(\sigma_{i_1}\times\overline{\sigma_{i_1}}),\cdots,(\pi_d^{-1})^{\times
2}(\sigma_{i_r}\times \overline{\sigma_{i_r}})]$$ with
$\sigma_{i_1}\subset\cdots\subset\sigma_{i_r}$.  If we map
$(\pi_d^{-1})^{\times 2}(\sigma_i\times \overline{\sigma_i})$ to
$\sigma_i$, we see that $L_{\Delta^n}$ is combinatorially equivalent
to $\text{sd}(\text{Bd}(\Delta^n))$. With this understood,  we will
identify
 $L_{\Delta^n}$ with $\text{sd}(\text{Bd}(\Delta^n))$.


\begin{lem}\label{iso-k}
$$E^2_{p, dq}(L_{\Delta^n},d)\otimes R_d\cong \bigoplus_{i+j=q}H_p(K_{i,j}^n; R_d)$$
where $$R_d=\begin{cases} {\Bbb Z}_2 & \text{ if } d=1\\
{\Bbb Z} & \text{ if } d=2. \end{cases}$$
\end{lem}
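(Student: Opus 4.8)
The plan is to identify, with $R_d$-coefficients, the whole $E^1$-page chain complex $\big(E^1_{\bullet,dq}(L_{\Delta^n},d)\otimes R_d,\ \mathbf{d}^{(d)}_1\big)$ — whose homology is $E^2_{\bullet,dq}(L_{\Delta^n},d)\otimes R_d$ — with the direct sum $\bigoplus_{i+j=q}\big(C_\bullet(K^n_{i,j};R_d),\partial\big)$ of simplicial chain complexes, and then pass to homology. Everything runs uniformly in $d$ once the internal degree $q$ is replaced by $dq$ and the coefficients by $R_d$.

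First I would describe the pieces. Under the identification of $L_{\Delta^n}$ with $\text{sd}(\text{Bd}(\Delta^n))$ used in the text, a $p$-simplex $a$ of $L_{\Delta^n}$ is a flag $\sigma_{i_0}\subset\cdots\subset\sigma_{i_p}$ of faces of $\text{Bd}(\Delta^n)$; since the $\sigma_{i_k}$ are nested, so are the complementary faces $\overline{\sigma_{i_k}}$ in reverse order, and as preimages commute with intersections one gets $X^d_a=\bigcap_{k}(\pi_d^{-1})^{\times 2}(\sigma_{i_k}\times\overline{\sigma_{i_k}})=\pi_d^{-1}(\sigma_{i_0})\times\pi_d^{-1}(\overline{\sigma_{i_p}})$. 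Each factor is $\pi_d^{-1}$ of a simplex, hence a $G_d$-manifold over a simplex (namely $\mathbb{R}P^{\alpha}$ for $d=1$ and $\mathbb{C}P^{\alpha}$ or $\overline{\mathbb{C}P}^{\alpha}$ for $d=2$), so by \cite[Theorem 3.1]{dj} one has $H_{dr}(\pi_d^{-1}(\Delta^{\alpha});R_d)\cong R_d$ for $0\le r\le\alpha$ and $0$ otherwise. The Künneth formula over $R_d$ — with no $\mathrm{Tor}$ term, the coefficients being a field when $d=1$ and the homology being free when $d=2$ — gives $H_{dq}(X^d_a;R_d)\cong\bigoplus_{r+s=q}H_{dr}(\pi_d^{-1}(\sigma_{i_0});R_d)\otimes H_{ds}(\pi_d^{-1}(\overline{\sigma_{i_p}});R_d)$, whose $(r,s)$-summand is a copy of $R_d$ exactly when $\dim\sigma_{i_0}\ge r$ and $\dim\overline{\sigma_{i_p}}\ge s$, i.e. (using $\dim\overline{\sigma_{i_p}}=n-1-\dim\sigma_{i_p}$) exactly when $\dim\sigma_{i_0}\ge r$ and $\dim\sigma_{i_p}\le n-s-1$. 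Comparing with the definition of $K^n_{i,j}$, for each fixed $(i,j)$ with $i+j=q$ the $p$-simplices of $L_{\Delta^n}$ carrying a nonzero $(i,j)$-summand are precisely the $p$-simplices of the subcomplex $K^n_{i,j}$. Tracking the canonical generators (the classes $\gamma_d(v,\cdot)$ of the perfect cell decompositions), $(\ref{equation})$ and Remark~\ref{complex} turn this bookkeeping into a canonical $R_d$-module isomorphism $E^1_{p,dq}(L_{\Delta^n},d)\otimes R_d\cong\bigoplus_{i+j=q}C_p(K^n_{i,j};R_d)$.

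The core of the argument is to check that under this isomorphism the differential $\mathbf{d}^{(d)}_1$ becomes the direct sum of the simplicial boundary maps of the $K^n_{i,j}$. By Remark~\ref{complex}, $\mathbf{d}^{(d)}_1$ is the alternating sum, over the facets $b$ of a flag $a$, of the homomorphisms induced by the inclusions $X^d_a\hookrightarrow X^d_b$. Deleting an interior term $\sigma_{i_k}$ ($0<k<p$) changes neither $\sigma_{i_0}$ nor $\sigma_{i_p}$, so $X^d_b=X^d_a$ and the induced map is the identity; deleting $\sigma_{i_0}$ replaces the first factor $\pi_d^{-1}(\sigma_{i_0})$ by $\pi_d^{-1}(\sigma_{i_1})\supset\pi_d^{-1}(\sigma_{i_0})$, and deleting $\sigma_{i_p}$ replaces the second factor by $\pi_d^{-1}(\overline{\sigma_{i_{p-1}}})\supset\pi_d^{-1}(\overline{\sigma_{i_p}})$. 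In every case Lemma~\ref{na-hom} applies: choosing a height function on the larger simplex whose restriction induces the perfect cell decomposition of the smaller one makes $\pi_d^{-1}$ of the smaller face a CW-subcomplex of $\pi_d^{-1}$ of the larger, so the induced map sends the canonical generator in each degree to the canonical generator (and is forced to be zero when the target degree is out of range). Hence $\mathbf{d}^{(d)}_1$ preserves the $(i,j)$-grading and restricts on $C_\bullet(K^n_{i,j};R_d)$ to the usual alternating sum of face maps with coefficients $\pm 1$ — signs being irrelevant modulo $2$ when $d=1$ and matching the standard Mayer--Vietoris convention (hence $\partial$) when $d=2$; one uses here that $K^n_{i,j}$ is genuinely a subcomplex, so deleting the extremal vertex of a flag in $K^n_{i,j}$ stays inside $K^n_{i,j}$. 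Taking homology of $\bigoplus_{i+j=q}(C_\bullet(K^n_{i,j};R_d),\partial)$ then yields $E^2_{p,dq}(L_{\Delta^n},d)\otimes R_d\cong\bigoplus_{i+j=q}H_p(K^n_{i,j};R_d)$.

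The step I expect to be the main obstacle is precisely this last identification of $\mathbf{d}^{(d)}_1$: one must show the inclusion-induced maps carry canonical generators to canonical generators — not merely to some nonzero multiple in the ambiguous degrees — which is exactly what forces us through Lemma~\ref{na-hom} and the compatible perfect cell decompositions of $\pi_d^{-1}(\sigma)\subset\pi_d^{-1}(\sigma')$ for nested faces $\sigma\subset\sigma'$ of $\Delta^n$; the remaining combinatorics (re-indexing the $(r,s)$-summands as the complexes $K^n_{i,j}$) is routine bookkeeping. As a side remark, the case $d=2$ could alternatively be obtained from the $d=1$ case via Theorem~\ref{betti} together with the torsion-freeness of $H_*(K^n_{i,j};\mathbb{Z})$ recorded in Proposition~\ref{hom}, but it seems cleanest to run the two cases in parallel.
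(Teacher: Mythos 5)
Your proposal is correct and follows essentially the same route as the paper: identify each $X^d_a$ for a flag $a$ as a product of (real or complex) projective spaces, apply the K\"unneth formula over $R_d$, and observe that the assignment $(\beta^i\otimes\gamma^j)_a\mapsto a$ defines a chain isomorphism from $\bigl(E^1_{\bullet,dq}(L_{\Delta^n},d)\otimes R_d,\mathbf{d}_1\bigr)$ onto $\bigoplus_{i+j=q}\mathcal{C}_\bullet(K^n_{i,j};R_d)$, then pass to homology. Your more detailed verification that $\mathbf{d}_1$ carries canonical generators to canonical generators (via the compatible perfect cell decompositions of Lemma~\ref{na-hom}) is exactly the point the paper compresses into its citation of Remark~\ref{complex}, so it is an elaboration rather than a different argument.
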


\begin{proof}
Given a simplex $a$ of the form
$\sigma_{i_1}\subset\cdots\subset\sigma_{i_r}$ in $L_{\Delta^n}$, we
see that
 $$X_a^d(L_{\Delta^n})=(\pi_d^{-1})^{\times 2}(\sigma_{i_1}\times
\overline{\sigma_{i_1}})\cap\cdots\cap(\pi_d^{-1})^{\times
2}(\sigma_{i_r}\times \overline{\sigma_{i_r}})=(\pi_d^{-1})^{\times
2}(\sigma_{i_1}\times\overline{\sigma_{i_r}}).$$  We have known that
$M^{dn}$ is homeomorphic to ${\Bbb R}P^n$ if $d=1$ and ${\Bbb C}P^n$
if $d=2$,  and for each face $\sigma$ of $\Delta^n$,
$\pi_d^{-1}(\sigma)$ is homeomorphic to ${\Bbb R}P^{\dim \sigma}$ if
$d=1$ and  ${\Bbb C}P^{\dim \sigma}$ if $d=2$. Thus,
$X_a^d(L_{\Delta^n})$
 is  homeomorphic to  ${\Bbb R}P^s\times {\Bbb R}P^t$ if $d=1$ and ${\Bbb C}P^s\times {\Bbb C}P^t$
 if $d=2$ where $s=\dim \sigma_{i_1}$ and $t=\dim \overline{\sigma_{i_r}}$.
 Moreover, if $d=1$ then $$H_q(X_a^1(L_{\Delta^n});{\Bbb Z}_2)\cong H_l({\Bbb R}P^s\times
{\Bbb R}P^t; {\Bbb Z}_2)=\sum_{i+j=q}H_i({\Bbb R}P^s;{\Bbb
Z}_2)\otimes H_j({\Bbb R}P^t;{\Bbb Z}_2)$$ and if $d=2$ then
$$ H_{2q}(X_a^2(L_{\Delta^n});{\Bbb Z})\cong H_{2l}({\Bbb C}P^s\times
{\Bbb C}P^t; {\Bbb Z})=\sum_{i+j=q}H_{2i}({\Bbb C}P^s;{\Bbb
Z})\otimes H_{2j}({\Bbb C}P^t;{\Bbb Z}).$$ So
$H_{dq}(X_a^d(L_{\Delta^n}); R_d)$ is generated by
$\beta^i\otimes\gamma^j$ with $i+j=q$, where
  $\beta^i$ and
$\gamma^j$ are generators of $H_i({\Bbb R}P^s;{\Bbb Z}_2)$ and
$H_j({\Bbb R}P^t;{\Bbb Z}_2)$ respectively if $d=1$, and generators
of $H_{2i}({\Bbb C}P^s;{\Bbb Z}))$ and  $H_{2j}({\Bbb C}P^t;{\Bbb
Z}))$ respectively if $d=2$. To emphasize $\beta^i\otimes\gamma^j$
as an element of $H_{dq}(X_a^d(L_{\Delta^n}); R_d)$,  we shall
denote it by $(\beta^i\otimes\gamma^j)_a$. Define
 $$f_d:E^1_{p,dq}(L_{\Delta^n}, d)\otimes R_d=\bigoplus_{a\in
L_{\Delta^n}\\
\atop \dim a=p}H_{dq}(X_a^d(L_{\Delta^n});R_d)\longrightarrow
\bigoplus_{i+j=q}\mathcal{C}_p(K_{i,j}^n; R_d)$$
 by mapping $(\beta^i\otimes\gamma^j)_a$ to $a\in K_{i,j}^n$,
where $\mathcal{C}_*(K_{i,j}^n;R_d)$ is the chain complex of
$K_{i,j}^n$ with $R_d$ coefficients. Then $f_d$ is a chain map since
the boundary operator on $E^1_{*,dq}(L_{\Delta^n},d)\otimes R_d$
agrees with the boundary operator on
$\bigoplus_{i+j=q}\mathcal{C}_*(K_{i,j}^n; R_d)$ by
Remark~\ref{complex} in Section~\ref{MV}. We see easily that $f_d$
is a bijection, so $f_d$ is actually a chain isomorphism. Thus,
$f_d$ induces the required isomorphism in Lemma~\ref{iso-k}.
\end{proof}

\begin{rem}\label{add}
If $q=0$, then $K^n_{0,0}=\text{sd}(\text{Bd}(\Delta^n))=L_{\Delta^n}$, so
$E^2_{p, 0}(L_{\Delta^n},d)\otimes R_d\cong H_p(S^{n-1}; R_d)$. This
is also shown in Remark~\ref{complex} of Section~\ref{MV} since each
$X^d_a$ is connected.
\end{rem}

Combining Proposition~\ref{hom}, Lemma~\ref{iso-k} and
Remark~\ref{add}, we have

\begin{cor}\label{e2-hom}
$$E_{p,dq}^2(L_{\Delta^n}, d)\otimes R_d\cong \begin{cases}
0 & \text{if $p+q\neq n-1$ and $p\neq0$}\\
R_d^{q+1} & \text{if $p=0$ and $q< n-1$}\\
R_d^{2^{n+1}-2} & \text {if $p=0$ and $q=n-1$}\\
R_d^{\sum_{i+j=q}\sum_{s=0}^j(-1)^{s+j}{{n+1}\choose{s}}{{n-s}\choose{n-i-s}}}&
\text{if $p+q=n-1$ and $p\not=0$.}
\end{cases}
$$
\end{cor}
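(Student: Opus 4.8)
The plan is to substitute the homology computation of Proposition~\ref{hom} into the isomorphism of Lemma~\ref{iso-k} and then sum over all pairs $(i,j)$ with $i+j=q$. Lemma~\ref{iso-k} gives $E^2_{p,dq}(L_{\Delta^n},d)\otimes R_d\cong\bigoplus_{i+j=q}H_p(K_{i,j}^n;R_d)$, and Proposition~\ref{hom} asserts that each $H_r(K_{i,j}^n)$ is free abelian; hence by universal coefficients $H_p(K_{i,j}^n;R_d)$ is a free $R_d$-module of rank $b_p(K_{i,j}^n)$, and the task reduces to adding up these Betti numbers over $i+j=q$ for each fixed $p$. It suffices to treat $0\le q\le n-1$, since for $q>n-1$ the right-hand side is empty (no $(i,j)$ with $i,j\ge0$ and $i+j=q$ arises) and the left-hand side vanishes as well, $X_d(M^{dn})$ being $d(n-1)$-dimensional.

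The case $q=0$ is immediate: the only pair is $(0,0)$, and since $K_{0,0}^n=\text{sd}(\text{Bd}(\Delta^n))\cong S^{n-1}$ one has $E^2_{p,0}(L_{\Delta^n},d)\otimes R_d\cong H_p(S^{n-1};R_d)$ (this is Remark~\ref{add}), equal to $R_d$ for $p=0$ and for $p=n-1$ and $0$ otherwise; for $n>1$ this matches the stated formula, the value at $p=n-1$ being the single-term sum $\sum_{s=0}^{0}(-1)^{s}\binom{n+1}{s}\binom{n-s}{n-s}=1$.

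Now fix $1\le q\le n-1$. If $q=n-1$, every $(i,j)$ with $i+j=q$ satisfies $n=i+j+1$, so Proposition~\ref{hom} says $K_{i,j}^n$ is $0$-dimensional with $b_0=\binom{n+1}{i+1}$; thus the sum is concentrated in $p=0$, with total rank $\sum_{i=0}^{n-1}\binom{n+1}{i+1}=\sum_{l=1}^{n}\binom{n+1}{l}=2^{n+1}-2$, while $p>0$ contributes $0$ (consistently with $p+q>n-1$). If instead $1\le q<n-1$, every such $(i,j)$ satisfies $n>i+j+1$, so by Proposition~\ref{hom} the only nonvanishing reduced homology of $K_{i,j}^n$ lies in the single degree $n-i-j-1=n-q-1\ge1$, with $b_0(K_{i,j}^n)=1$ and $b_{n-q-1}(K_{i,j}^n)=\sum_{s=0}^{j}(-1)^{s+j}\binom{n+1}{s}\binom{n-s}{n-i-s}$. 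Since there are exactly $q+1$ pairs $(i,j)$ with $i,j\ge0$ and $i+j=q$, the $b_0$'s sum to $R_d^{q+1}$ in degree $p=0$, the top Betti numbers sum, in the single degree $p=n-q-1$ (which satisfies $p+q=n-1$ and $p\ne0$), to the rank $\sum_{i+j=q}\sum_{s=0}^{j}(-1)^{s+j}\binom{n+1}{s}\binom{n-s}{n-i-s}$, and every other $p$ contributes $0$.

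This argument is essentially bookkeeping, the substantive inputs being Lemma~\ref{iso-k} and Proposition~\ref{hom}. The one point that needs care is the matching of the homological grading: for all $(i,j)$ with a fixed $i+j=q<n-1$ the top homology of $K_{i,j}^n$ sits in one and the same degree $p=n-q-1$, which is positive, so that the $b_0$-contributions (each equal to $1$) and the top-Betti-number contributions occupy separate slots of the $E^2$-page; one must also track the regime change at $q=n-1$, where $K_{i,j}^n$ degenerates to dimension $0$ and $b_0$ jumps from $1$ to $\binom{n+1}{i+1}$, and verify the elementary identity $\sum_{l=1}^{n}\binom{n+1}{l}=2^{n+1}-2$.
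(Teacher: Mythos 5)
Your argument is correct and is essentially the paper's own: the corollary is obtained exactly by feeding the Betti numbers of Proposition~\ref{hom} into the isomorphism of Lemma~\ref{iso-k} (with Remark~\ref{add} covering $q=0$) and summing over the $q+1$ pairs $(i,j)$ with $i+j=q$. Your version merely spells out the bookkeeping (the grading $p=n-q-1$, the regime change at $q=n-1$, and the identity $\sum_{l=1}^{n}\binom{n+1}{l}=2^{n+1}-2$), all of which checks out.
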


\begin{prop}\label{sm-sq}
The spectral sequence $E_{p,q}^1(L_{\Delta^n}, 1)\otimes {\Bbb Z}_2,  ...,
E_{p,q}^\infty(L_{\Delta^n}, 1)\otimes {\Bbb Z}_2$ collapses at the $E_{p,q}^2$-term. Furthermore,
$$ H_i(F_{({\Bbb Z}_2)^n}(M^n,2); {\Bbb Z}_2)\cong\bigoplus_{p+q=i} E_{p,q}^2(L_{\Delta^n}, 1)\otimes {\Bbb Z}_2.$$
\end{prop}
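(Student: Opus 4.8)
The plan is to prove that the Mayer--Vietoris spectral sequence attached to $X_1(M^n)$ and the locally nice complex $L_{\Delta^n}$ (with $M^n={\Bbb R}P^n$) degenerates at $E^2$; the stated isomorphism is then immediate from Theorem~\ref{mv-c}, which holds verbatim with $L_{\Delta^n}$ in place of $K_P$, exactly as in the computations preceding this proposition. By Corollary~\ref{e2-hom} (case $d=1$) the page $E^2_{p,q}(L_{\Delta^n},1)\otimes{\Bbb Z}_2$ is nonzero only on the antidiagonal $p+q=n-1$ and on the column $p=0$ ($0\le q\le n-1$); in particular it vanishes in total degree $>n-1$, and for $p\ge 1$ it vanishes off the antidiagonal. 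Since $\mathbf{d}_r^{(1)}\colon E^r_{p,q}\to E^r_{p-r,q+r-1}$ lowers $p+q$ by one and $r\ge 2$ forces $p-r\le p-2$, a nonzero higher differential must run from the antidiagonal at $p=r$ to the column point $(0,n-2)$; thus the only candidates are
$$\mathbf{d}_r^{(1)}\colon E^r_{r,\,n-1-r}(L_{\Delta^n},1)\otimes{\Bbb Z}_2\To E^r_{0,\,n-2}(L_{\Delta^n},1)\otimes{\Bbb Z}_2\qquad(2\le r\le n-1),$$
and, since each such source equals $E^2_{r,n-1-r}\otimes{\Bbb Z}_2$ (it neither receives nor emits a lower differential) while each target is a quotient of $E^2_{0,n-2}\otimes{\Bbb Z}_2$, collapse at $E^2$ is equivalent to the vanishing of all of these. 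For $n\le 2$ this range is empty; the content begins at $n=3$, and there the parity argument used for $d=2$ in Theorem~\ref{eq-coh} is unavailable.

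The main step is this vanishing, and the approach I would take is to push the splitting from the proof of Lemma~\ref{iso-k} beyond $E^1$. Over ${\Bbb Z}_2$ the complex $(E^1_{*,q}(L_{\Delta^n},1)\otimes{\Bbb Z}_2,\mathbf{d}_1^{(1)})$ decomposes as $\bigoplus_{i+j=q}\mathcal{C}_*(K_{i,j}^n;{\Bbb Z}_2)$, and the inclusion-induced maps $X_a^1\hookrightarrow X_b^1$ out of which the Mayer--Vietoris differentials are assembled correspond, under this splitting, to the inclusions $K_{i,j+1}^n\hookrightarrow K_{i,j}^n$ and $K_{i+1,j}^n\hookrightarrow K_{i,j}^n$, which raise $i+j$ by exactly one. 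By Proposition~\ref{hom} the ${\Bbb Z}_2$-homology of each $K_{i,j}^n$ lives only in degree $0$ and in its top degree $n-i-j-1$; but $E^2_{r,n-1-r}\otimes{\Bbb Z}_2=\bigoplus_{i+j=n-1-r}H_r(K_{i,j}^n;{\Bbb Z}_2)$ sits entirely in top degree, whereas $E^2_{0,n-2}\otimes{\Bbb Z}_2=\bigoplus_{i+j=n-2}H_0(K_{i,j}^n;{\Bbb Z}_2)$ sits in degree $0$. Tracking the bidegree $(i,j)$ through the zig-zag that computes $\mathbf{d}_r^{(1)}$ then forces one, at an intermediate stage, into $H_\ell(K_{i',j'}^n;{\Bbb Z}_2)$ for some $0<\ell<n-i'-j'-1$, a group that vanishes; hence $\mathbf{d}_r^{(1)}=0$. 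The cleanest realization of this would be to show that, with ${\Bbb Z}_2$-coefficients, the entire filtered Mayer--Vietoris blow-up of $X_1(M^n)$ splits as a direct sum over $(i,j)$ of the (degenerate) spectral sequences computing $H_*(K_{i,j}^n;{\Bbb Z}_2)$ placed in column $q=i+j$, which would make collapse automatic. As a consistency check, Theorems~\ref{betti} and~\ref{eq-coh} applied to $L_{\Delta^n}$ identify $\sum_{p,q}\dim_{{\Bbb Z}_2}E^2_{p,q}(L_{\Delta^n},1)\otimes{\Bbb Z}_2$ with the total mod~$2$ Betti number of $F_{T^n}({\Bbb C}P^n,2)$, so the $d=1$ sequence can attain this count only if it collapses.

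Granting the collapse, Theorem~\ref{mv-c} gives $H_i(F_{({\Bbb Z}_2)^n}(M^n,2);{\Bbb Z}_2)\cong\bigoplus_{p+q=i}E^2_{p,q}(L_{\Delta^n},1)\otimes{\Bbb Z}_2$, which is the assertion. I expect the genuine obstacle to be precisely the bidegree-tracking step above: it requires a splitting-compatible description of the \emph{higher} Mayer--Vietoris differentials $\mathbf{d}_r^{(1)}$, $r\ge2$, and not merely of $\mathbf{d}_1^{(1)}$, which is the only one made explicit in Remark~\ref{complex}.
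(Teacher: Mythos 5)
Your reduction is the same as the paper's: by Corollary~\ref{e2-hom} the only possibly nonzero higher differentials are ${\bf d}_r\colon E^r_{r,n-1-r}(L_{\Delta^n},1)\otimes{\Bbb Z}_2\to E^r_{0,n-2}(L_{\Delta^n},1)\otimes{\Bbb Z}_2$. But the step that actually carries the proposition --- the vanishing of these maps --- is not proved, and you flag this yourself. The ``bidegree-tracking'' argument is not valid as stated: the $(i,j)$-splitting of Lemma~\ref{iso-k} is an isomorphism of the $E^1$-page as a chain complex under ${\bf d}_1$ only, whereas ${\bf d}_r$ for $r\geq 2$ is computed by chain-level zig-zags in the double complex $D^{L_{\Delta^n}}_{*,*}\otimes{\Bbb Z}_2$; the intermediate terms of such a zig-zag are chains, not homology classes of any $K^n_{i',j'}$, so the vanishing of $H_\ell(K^n_{i',j'};{\Bbb Z}_2)$ in intermediate degrees does not force ${\bf d}_r=0$ (exactness is precisely what permits the successive lifts defining a higher differential). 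Your ``cleanest realization'' would require a single CW structure on $X_1(M^n)$ in which every piece $\pi_1^{-1}(\sigma)\times\pi_1^{-1}(\overline{\sigma})$ and all their intersections are subcomplexes carrying the product of perfect cell structures (so that the mod~$2$ cellular differential vanishes and the $(i,j)$-bigrading exists at chain level); but the perfect one-cell-per-dimension structure on ${\Bbb R}P^n$ coming from one height function makes only the faces in a single flag of $\Delta^n$ into subcomplexes, so no such global structure is available and the claimed chain-level splitting is not established. The closing consistency check is also circular: Theorems~\ref{betti} and~\ref{eq-coh} compare $E^2$-pages, and extracting from them the total mod~$2$ Betti number of $F_{({\Bbb Z}_2)^n}(M^n,2)$ presupposes exactly the collapse in question.

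The paper closes the gap by a comparison argument you do not have. It observes that killing all the maps ${\bf d}_r\colon E^r_{r,n-1-r}\to E^r_{0,n-2}$ is equivalent to the single statement $H_{n-2}(X_1(M^n);{\Bbb Z}_2)\cong{\Bbb Z}_2^{n-1}$ (Claim~C), since $\dim E^\infty_{0,n-2}\leq\dim E^2_{0,n-2}=n-1$ in any case. For the lower bound it regards $\Delta^n$ as a facet of $\Delta^{n+1}$, so that $M^n=\pi_1^{-1}(\Delta^n)\subset M^{n+1}$ and $X_1(M^n)\subset X_1(M^{n+1})$; for the ambient space the degree $n-2$ lies strictly below the critical antidiagonal $p+q=n$, hence $H_{n-2}(X_1(M^{n+1});{\Bbb Z}_2)\cong E^2_{0,n-2}(L_{\Delta^{n+1}},1)\otimes{\Bbb Z}_2\cong{\Bbb Z}_2^{n-1}$ automatically by Theorem~\ref{mv-c}. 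The $n-1$ generators of $E^2_{0,n-2}$ are then represented by classes $(\beta^i\otimes\gamma^j)_a$ supported on faces $a$ of $\Delta^n\subset\Delta^{n+1}$, and a commutative diagram of inclusion-induced maps (together with the identification of $E^2_{0,*}$ with a subgroup of $H_*(X_1(M^{n+1});{\Bbb Z}_2)$ via the edge map) shows that $H_{n-2}(X_1(M^n);{\Bbb Z}_2)\to H_{n-2}(X_1(M^{n+1});{\Bbb Z}_2)$ hits a basis, giving $\dim H_{n-2}(X_1(M^n);{\Bbb Z}_2)\geq n-1$ and hence equality. Some device of this kind --- comparing with a larger configuration where collapse in the relevant degree holds for dimension reasons, or an honest computation of $H_{n-2}(X_1(M^n);{\Bbb Z}_2)$ by other means --- is what your proposal is missing.
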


\begin{proof}
We see by Corollary~\ref{e2-hom} that for $r\geq 2$, the differential
$${\bf d}_r: E_{p,q}^r(L_{\Delta^n}, 1)\otimes {\Bbb Z}_2\longrightarrow E_{p-r,q+r-1}^r(L_{\Delta^n}, 1)\otimes {\Bbb Z}_2$$
is a zero homomorphism if $p\not=r$ or $q+r\not =n-1$. This also means that
 $E_{p,q}^\infty(L_{\Delta^n}, 1)\otimes {\Bbb Z}_2\cong  E_{p,q}^2(L_{\Delta^n}, 1)\otimes {\Bbb Z}_2$ if either $p+q<n-2$ or $p+q>n-1$.
So, to complete the proof of Proposition~\ref{sm-sq}, it suffices to show that if $p=r$ and $q+r=n-1$, then
 \begin{equation}\label{diff}
 {\bf d}_r: E_{r,n-r-1}^r(L_{\Delta^n}, 1)\otimes {\Bbb Z}_2\longrightarrow E_{0,n-2}^r(L_{\Delta^n}, 1)\otimes {\Bbb Z}_2
 \end{equation}
 is also a zero homomorphism. Actually, this is also equivalent to
 proving that $H_{n-2}(X_1(M^n);{\Bbb Z}_2)\cong E_{0,n-2}^\infty(L_{\Delta^n}, 1)\otimes {\Bbb Z}_2\cong E_{0,n-2}^2(L_{\Delta^n}, 1)\otimes {\Bbb
 Z}_2$, which is isomorphic to ${\Bbb Z}_2^{n-1}$ by Corollary~\ref{e2-hom}.

 \vskip .2cm
\noindent {\bf Claim C.} $H_{n-2}(X_1(M^n);{\Bbb Z}_2)\cong {\Bbb
Z}_2^{n-1}$. \vskip .2cm

Regard $\Delta^n$ as a facet of an $(n+1)$-dimensional simplex
$\Delta^{n+1}$, and let $\pi_1: M^{n+1}\longrightarrow \Delta^{n+1}$
be the small cover over $\Delta^{n+1}$. Then
$M^n=\pi_1^{-1}(\Delta^n)$. Of course, we also have that
$E_{p,q}^\infty(L_{\Delta^{n+1}}, 1)\otimes {\Bbb Z}_2\cong
E_{p,q}^2(L_{\Delta^{n+1}}, 1)\otimes {\Bbb Z}_2$ if $p+q<n-1$, so
using Theorem~\ref{mv-c}, we also have that
$$ H_i(X_1(M^{n+1});{\Bbb Z}_2)\cong E_{0,i}^2(L_{\Delta^{n+1}}, 1)\otimes {\Bbb Z}_2\cong {\Bbb Z}_2^{i+1}$$ for $i<n-1$.
 In particular, $H_{n-2}(X_1(M^{n+1});{\Bbb Z}_2)\cong E_{0,n-2}^2(L_{\Delta^{n+1}}, 1)\otimes {\Bbb Z}_2\cong {\Bbb Z}_2^{n-1}$.
From the proof of Lemma~\ref{iso-k}, we have the following
isomorphism
$$E_{0,n-2}^1(L_{\Delta^{n+1}}, 1)\otimes {\Bbb Z}_2=\bigoplus_{a\in L_{\Delta^{n+1}}\\
\atop \dim a=0}H_{n-2}(X_a^1(L_{\Delta^{n+1}});{\Bbb Z}_2)\cong
\bigoplus_{i+j=n-2}\mathcal{C}_0(K_{i,j}^{n+1};{\Bbb Z}_2).$$ Since
$K_{i,j}^{n+1}$ with $i+j=n-2$ is connected, take a simplex $a\in
K_{i,j}^{n+1}\subset L_{\Delta^{n+1}}$ with $\dim a=0$ (i.e., $a$ is
a vertex of $K_{i,j}^{n+1}$ and it is also a face of $\Delta^{n+1}$ of dimension $\leq n-2$), each generator $$(\beta^i\otimes
\gamma^j)_a\in H_{n-2}(X_a^1(L_{\Delta^{n+1}});{\Bbb Z}_2)$$ is a
cycle  in the chain group $E_{0,n-2}^1(L_{\Delta^{n+1}}, 1)\otimes
{\Bbb Z}_2$ (so the whole $E_{0,n-2}^1(L_{\Delta^{n+1}}, 1)\otimes
{\Bbb Z}_2$ exactly becomes the cycle chain group in this case), and
in particular,  it is homologous to any one of other generators
$(\beta^i\otimes \gamma^j)_{a'}, a'\in K_{i,j}^{n+1}$ with $\dim
a'=0$. With this understood, for each $0\leq i\leq n-2$, we can
always choose a face
$\sigma$ of dimension $i$ in $\Delta^n\subset \Delta^{n+1}$ as such a $0$-simplex $a$ in $K_{i,j}^n \subset
K_{i,j}^{n+1}$, so that when $i$ runs over $0, 1, ..., n-2$, all $n-1$ generators of $E_{0,n-2}^2(L_{\Delta^{n}}, 1)\otimes {\Bbb Z}_2$ (resp. $E_{0,n-2}^2(L_{\Delta^{n+1}}, 1)\otimes {\Bbb Z}_2$) can be represented by $n-1$
elements of the forms $(\beta^i\otimes \gamma^j)_a\in H_{n-2}(X_a^1(L_{\Delta^{n}});{\Bbb Z}_2)$ (resp.
$H_{n-2}(X_a^1(L_{\Delta^{n+1}});{\Bbb Z}_2)$), respectively. 

Now for each such $a$,  consider the following commutative diagram
\begin{equation}\label{dia-com}
\begin{CD}
H_{n-2}(X^1_a(L_{\Delta^{n}});{\Bbb Z}_2)@>{\theta_a}>>H_{n-2}(X_a^1(L_{\Delta^{n+1}});{\Bbb Z}_2)\\
@VVV @V\lambda VV\\
H_{n-2}(X_1(M^{n});{\Bbb Z}_2)@>>>H_{n-2}(X_1(M^{n+1});{\Bbb Z}_2)
\end{CD}
\end{equation}
where the four homomorphisms are induced by  natural embeddings,
such that $\theta_a$ maps the  generator $(\beta^i\otimes \gamma^j)_a$ of
$H_{n-2}(X^1_a(L_{\Delta^{n}});{\Bbb Z}_2)$ to that of
$$H_{n-2}(X_a^1(L_{\Delta^{n+1}});{\Bbb Z}_2).$$
Next we shall show that  the following diagram is commutative
\begin{equation}\label{diagram}
\begin{CD}
H_{n-2}(X_a^1(L_{\Delta^{n+1}});{\Bbb Z}_2)@>{\kappa}>>E_{0,n-2}^1(L_{\Delta^{n+1}}, 1)\otimes {\Bbb Z}_2\\
@V\lambda VV @V{p} VV\\
H_{n-2}(X_1(M^{n+1});{\Bbb Z}_2)@<{\omega}<{\cong}<E_{0,n-2}^2(L_{\Delta^{n+1}}, 1)\otimes {\Bbb Z}_2\\
\end{CD}
\end{equation}
where  $\kappa$ is the natural inclusion, $p$ is the quotient map,
and $\omega$ is the isomorphism induced by the spectral sequence.
In fact,  from the definition of our spectral sequence in
Section~\ref{MV}, we may induce the following homomorphism
$$
E^1_{1,*}(L_{\Delta^{n+1}}, 1)\otimes {\Bbb Z}_2\xrightarrow[]{{\bf d}_1}
E^1_{0,*}(L_{\Delta^{n+1}}, 1)\otimes {\Bbb Z}_2
\xrightarrow[]{i_*}
 H_*(X_1(M^{n+1});{\Bbb Z}_2)
$$
$$\parallel$$
$$ \bigoplus_{a\in L_{\Delta^{n+1}}\\
\atop \dim a=0}H_{*}(X_a^1(L_{\Delta^{n+1}});{\Bbb
Z}_2)$$
 where $i_*$
is induced by all embeddings $i_a:
X_a^1(L_{\Delta^{n+1}})\hookrightarrow X_1(M^{n+1})$. Then  $E^2_{0,*}(L_{\Delta^{n+1}}, 1)\otimes {\Bbb Z}_2$ is the cokernel of ${\bf d}_1$, so $E^2_{0,*}(L_{\Delta^{n+1}}, 1)\otimes {\Bbb Z}_2$ is a subgroup of
$H_*(X_1(M^{n+1});{\Bbb Z}_2)$.
Thus, the quotient map $p$ agrees with $\lambda$, i.e., the above diagram (\ref{diagram}) is commutative.

Furthermore,  let $i$ ran over $0, 1, ..., n-2$. Then,  all $\lambda\theta_a(\beta^i\otimes \gamma^{n-2-i})_a, i=0,1,..., n-2$, form a basis of $H_{n-2}(X_1(M^{n+1});{\Bbb Z}_2)$. Using the commutative diagram (\ref{dia-com}), we see  that $\dim H_{n-2}(X_1(M^{n});{\Bbb Z}_2)\geq \dim H_{n-2}(X_1(M^{n+1});{\Bbb Z}_2)=n-1$.
On the other hand, we have that $\dim H_{n-2}( X_1(M^n);{\Bbb Z}_2)=\dim E_{0, n-2}^\infty(L_{\Delta^n}, 1)\otimes {\Bbb Z}_2\leq \dim E_{0, n-2}^2(L_{\Delta^n}, 1)\otimes {\Bbb Z}_2=n-1$. Therefore, we have that $\dim H_{n-2}( X_1(M^n);{\Bbb Z}_2)= n-1$, so Claim C holds. Moreover, we may obtain  that the differential ${\bf d}_r$ in (\ref{diff}) is a zero homomorphism for $r\geq 2$.
 This completes the proof.
\end{proof}

\begin{proof}[Proof of Proposition~\ref{b2}]
Using Theorem~\ref{eq-coh}, Proposition~\ref{sm-sq} and
Corollary~\ref{e2-hom}, by direct calculations we may induce the
required result for (mod 2) Betti numbers. For more detailed
calculations, we would like leave them as an exercise to the
readers.
\end{proof}

\section{Appendix--Mayer-Vietoris spectral sequence } \label{MV}

Suppose that  $X$ is a  CW-complex with all cells indexed by $J$, and $X_1,...,X_N$ are
subcomplexes of $X$ such that $\bigcup_iX_i=X$ and all possible intersections of $X_1,...,X_N$ are subcomplexes of $X$.  Associated with $X$, we may define an abstract
simplicial complex $K$ (including empty set) with vertices $1,...,N$ (or $X_1,...,X_N$) as follows: if $X_{i_1}\cap\cdots\cap X_{i_r}\not=\emptyset$, then
$\{i_1, ..., i_r\}\in K$.  For each $a\in K$,  we define
$$X_a=\begin{cases}
\bigcap_{i\in a}X_i & \text{ if } a\not=\emptyset\\
X & \text{ if } a=\emptyset.
\end{cases}$$
Set $D^K_{p,q}(X)=\bigoplus_{a\in K, |a|=p+1}D_q(X_a)$ where
$D_*(X_a)=\{D_q(X_a)\}$ is the cellular chain complex of $X_a$.
Then we shall see that $D^K_{*,*}(X)$ has a natural double complex structure.

Let $e_\alpha$ be
a cell of $X$ in $\{e_\alpha| \alpha\in J\}$. Define $K(e_\alpha)=\{a\in K\mid e_\alpha\subset
X_a\}$. Obviously, $K(e_\alpha)$ is a subcomplex  determined by some simplex of $K$, so it is acyclic.
If  $a\in K(e_\alpha)$, then  $e_\alpha$ would be  a
generator of $D_{\dim e_\alpha}(X_a)$, denoted by
$e_{\alpha,a}$. Furthermore, we may write each cellular chain of $D^K_{p,q}(X)=\bigoplus_{a\in K, |a|=p+1}D_q(X_a)$ as
$$\sum_{\alpha\in J(q)}\sum_{a\in K(e_\alpha) \atop |a|=p+1}k_{\alpha,a} e_{\alpha,a}$$ where
$k_{\alpha,a}\in {\Bbb Z}$, and $J(q)$ means that for $\alpha\in J(q)$, $\dim e_\alpha=q$.

Let $c=\sum_{a\in K(e_\alpha)}\lambda_a a$ be a chain in the simplicial chain complex $\mathcal{C}_*(K(e_\alpha))$
of
$K(e_\alpha)$.
Define $e_{\alpha, c}=\sum_{a\in K(e_\alpha)}\lambda_a e_{\alpha, a}$. Then it is easy to check that
\begin{lem}\label{su-ne}
$e_{\alpha, c}=0$ if and only if $c=0$.
\end{lem}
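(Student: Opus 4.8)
The plan is to unwind the definition of $e_{\alpha,c}$ and to recognize that $c\mapsto e_{\alpha,c}$ is nothing more than a relabelling of the simplices of $K(e_\alpha)$ by the corresponding cellular generators, hence injective for essentially formal reasons. Concretely, the lemma amounts to the statement that the collection $\{e_{\alpha,a}:a\in K(e_\alpha)\}$ is linearly independent over $\mathbb Z$ inside the graded group $\bigoplus_p D^K_{p,\dim e_\alpha}(X)$.

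First I would record the one genuinely structural point. For every $a\in K(e_\alpha)$ the open cell $e_\alpha$ is contained in $X_a=\bigcap_{i\in a}X_i$, and $X_a$ is an intersection of subcomplexes of $X$, hence itself a subcomplex; therefore $e_\alpha$ is a cell of $X_a$, and so $e_{\alpha,a}$ is an honest free generator of the cellular chain group $D_{\dim e_\alpha}(X_a)$. In particular $e_{\alpha,a}\neq 0$. This is the only place where the standing hypothesis that all intersections of the $X_i$ are subcomplexes enters.

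Next I would invoke the direct-sum structure of the double complex. By definition $D^K_{p,q}(X)=\bigoplus_{a\in K,\ |a|=p+1}D_q(X_a)$, so for two distinct simplices $a,a'$ of $K(e_\alpha)$ the elements $e_{\alpha,a}$ and $e_{\alpha,a'}$ lie in distinct summands of $\bigoplus_p D^K_{p,\dim e_\alpha}(X)$: they sit in $D_{\dim e_\alpha}(X_a)$ and $D_{\dim e_\alpha}(X_{a'})$ respectively, and these are different summands whether or not $|a|=|a'|$. Combined with $e_{\alpha,a}\neq 0$, this gives the desired linear independence. Writing $c=\sum_{a\in K(e_\alpha)}\lambda_a a$, one then has $e_{\alpha,c}=\sum_a\lambda_a e_{\alpha,a}$, which vanishes if and only if every $\lambda_a=0$, i.e. if and only if $c=0$.

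There is no real obstacle here; the lemma is a bookkeeping statement, and the only thing to be careful about is to make explicit both that $e_{\alpha,a}$ is a nonzero free generator and that distinct simplices $a$ index distinct direct summands, so that the linear independence—and hence the injectivity—is automatic. The point of the lemma is that it allows one to transport the (acyclic) simplicial chain complex $\mathcal C_*(K(e_\alpha))$ faithfully into the double complex $D^K_{*,*}(X)$, which is what is needed when assembling the Mayer–Vietoris spectral sequence.
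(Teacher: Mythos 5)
Your argument is correct: since the direct sum $D^K_{p,q}(X)=\bigoplus_{a\in K,\,|a|=p+1}D_q(X_a)$ is indexed by the simplices $a$ themselves, the elements $e_{\alpha,a}$ for distinct $a\in K(e_\alpha)$ lie in distinct summands and each is a free generator, so $\sum_a\lambda_a e_{\alpha,a}=0$ forces all $\lambda_a=0$. The paper treats this lemma as immediate ("it is easy to check") and gives no proof, and your bookkeeping argument is exactly the verification it has in mind.
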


Now two differentials  on $D^K_{*,*}(X)$ are defined as follows:
One is $\partial_1:D^K_{p,q}(X)\rightarrow D^K_{p,q-1}(X)$  given by
$$\partial_1(\sum_{\alpha\in J(q)}\sum_{a\in K(e_\alpha) \atop |a|=p+1}k_{\alpha,a} e_{\alpha,a})=\sum_{\alpha\in J(q)}\sum_{a\in K(e_\alpha) \atop |a|=p+1}k_{\alpha,a} \partial(e_{\alpha,a})$$
which is induced by the boundary homomorphism $\partial$ of $D_*(X_a)$, and the other one is   $\partial_2:D^K_{p,q}(X)\rightarrow
D^K_{p-1,q}(X)$ given by
\begin{equation}\label{eq}
\partial_2(\sum_{\alpha\in J(q)}\sum_{a\in K(e_\alpha) \atop |a|=p+1}k_{\alpha,a} e_{\alpha,a})=\sum_{\alpha\in J(q)}\sum_{a\in K(e_\alpha) \atop |a|=p+1}k_{\alpha,a} e_{\alpha,\partial' a}
\end{equation}
which is induced by the boundary homomorphism $\partial'$ of the simplicial chain complex $\mathcal{C}_*(K(e_\alpha))$.
Note that for the empty set  $\emptyset\in K$,  $\partial'\emptyset=0$.

\vskip .2cm An easy argument shows  that
$\partial_1\partial_2=\partial_2\partial_1$. Thus we have the
following commutative diagram:

\[
\begin{CD}
@. 0 @. 0 @.  @. 0  \\
@. @A\partial_2 AA @A\partial_2 AA @. @A\partial_2 AA  \\
0@<\partial_1<<D^K_{-1,0}(X) @<\partial_1<< D^K_{-1,1}(X)  @<\partial_1<< \cdots @<\partial_1<< D^K_{-1,q}(X)@<\partial_1<<\cdots\\
@. @A\partial_2 AA @A\partial_2 AA @. @A\partial_2 AA \\
0@<\partial_1<<D^K_{0,0}(X) @<\partial_1<< D^K_{0,1}(X) @<\partial_1<< \cdots @<\partial_1<< D^K_{0,q}(X)@<\partial_1<<\cdots\\
@. @A\partial_2 AA @A\partial_2 AA @. @A\partial_2 AA  \\
\vdots @. \vdots @. \vdots @.  @. \vdots @. \vdots \\
@. @A\partial_2 AA @A\partial_2 AA @. @A\partial_2 AA  \\
0@<\partial_1<<D^K_{p,0}(X) @<\partial_1<< D^K_{p,1}(X) @<\partial_1<< \cdots @<\partial_1<< D^K_{p,q}(X)@<\partial_1<<\cdots\\
@. @A\partial_2 AA @A\partial_2 AA @. @A\partial_2 AA  \\
\vdots @. \vdots @. \vdots @.  @. \vdots @. \vdots \\
\end{CD}
\]

Now, let us look at the  structure of this double complex
$(D^K_{*,*}(X), \partial_1,\partial_2)$.

\begin{prop}
Every column of the above diagram is exact, i.e., for each $q$,
$$
0\xleftarrow[]{\partial_2}D^K_{-1,q}(X) \xleftarrow[]{\partial_2} D^K_{0,q}(X)  \xleftarrow[]{\partial_2} \cdots \xleftarrow[]{\partial_2} D^K_{p,q}(X)\xleftarrow[]{\partial_2}\cdots\\
$$
is exact.
\end{prop}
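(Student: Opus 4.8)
The plan is to fix $q$ and decompose the $q$-th column as a direct sum, indexed by the $q$-cells $e_\alpha$ of $X$, of copies of the augmented simplicial chain complex of $K(e_\alpha)$; since each $K(e_\alpha)$ is a simplex, each such complex is exact, and a direct sum of exact sequences is exact.

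First I would note that for each $p\geq -1$,
$$D^K_{p,q}(X)=\bigoplus_{a\in K,\ |a|=p+1}D_q(X_a)=\bigoplus_{\alpha\in J(q)}\ \bigoplus_{a\in K(e_\alpha),\ |a|=p+1}\Z\,e_{\alpha,a},$$
because $X_a$ is a subcomplex of $X$, so $D_q(X_a)$ is free on those $q$-cells $e_\alpha$ with $e_\alpha\subset X_a$, i.e. with $a\in K(e_\alpha)$. From the defining formula~(\ref{eq}), $\partial_2(e_{\alpha,a})=e_{\alpha,\partial' a}$ is a $\Z$-combination of generators $e_{\alpha,b}$ carrying the \emph{same} label $\alpha$; hence $\partial_2$ respects the decomposition by $\alpha$, and it suffices to show that for each single $\alpha\in J(q)$ the subcomplex $C^\alpha_\bullet$ with $C^\alpha_p=\bigoplus_{a\in K(e_\alpha),\,|a|=p+1}\Z\,e_{\alpha,a}$ is exact.

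Next I would invoke Lemma~\ref{su-ne}: the map $c\mapsto e_{\alpha,c}$ is injective, hence a bijection, from the group $\mathcal{C}_p(K(e_\alpha))$ of $p$-dimensional simplicial chains of $K(e_\alpha)$ — where the empty simplex $\emptyset$ is counted as the unique generator in dimension $-1$ — onto $C^\alpha_p$, and by~(\ref{eq}) it intertwines $\partial_2$ with the simplicial boundary $\partial'$ (recall $\partial'\emptyset=0$). Thus $C^\alpha_\bullet$ is isomorphic as a chain complex to the augmented simplicial chain complex $\widetilde{\mathcal{C}}_\bullet(K(e_\alpha))$. Finally, $K(e_\alpha)$ is a simplex: writing $I_\alpha=\{i\in[N]\mid e_\alpha\subset X_i\}$, we have $K(e_\alpha)=\{a\subseteq I_\alpha\mid a\in K\}$, and every $a\subseteq I_\alpha$ actually lies in $K$ since $\bigcap_{i\in a}X_i\supseteq e_\alpha\neq\emptyset$; moreover $I_\alpha\neq\emptyset$ because $\bigcup_iX_i=X\ni e_\alpha$. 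So $K(e_\alpha)=2^{I_\alpha}$ is a nonempty simplex, whose augmented simplicial chain complex is exact, and the column is a direct sum of such, hence exact.

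The argument is essentially bookkeeping, so the only real point to get right is the indexing at the bottom of the column: the term $D^K_{-1,q}(X)=D_q(X_\emptyset)=D_q(X)$ must be matched with the dimension-$(-1)$ (augmentation) term of each $\widetilde{\mathcal{C}}_\bullet(K(e_\alpha))$, and it is precisely because $K(e_\alpha)$ is a nonempty simplex — so its \emph{augmented} complex, not merely its reduced homology, is trivial — that the column is exact all the way down to $0$ rather than only acyclic in positive degrees. I would also record, en route, that $\partial_2^2=0$ follows from $(\partial')^2=0$ and that $J(q)$ may be empty (a zero column, trivially exact), so no case is left out.
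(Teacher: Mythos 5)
Your proof is correct and is essentially the paper's argument: the paper likewise splits everything cell-by-cell over $\alpha\in J(q)$, uses Lemma~\ref{su-ne} to transfer to the (augmented) simplicial chain complex of $K(e_\alpha)$, and invokes the acyclicity of the simplex $K(e_\alpha)$ — you merely package this as an explicit direct-sum decomposition into exact complexes rather than as the paper's cycle-chase, and you spell out why $K(e_\alpha)=2^{I_\alpha}$ is a nonempty full simplex, which the paper treats as obvious.
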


\begin{proof}
Suppose that $\sum_{\alpha\in J(q)}\sum_{a\in K(e_\alpha) \atop |a|=p+1}k_{\alpha,a} e_{\alpha,a}$ is  a cycle in $D^K_{p,q}(X)$. Then $$\partial_2(\sum_{\alpha\in J(q)}\sum_{a\in K(e_\alpha) \atop |a|=p+1}k_{\alpha,a} e_{\alpha,a})=0.$$
Furthermore, we have that for each $\alpha\in J(q)$, $\partial_2(\sum_{a\in K(e_\alpha)
\atop |a|=p+1}k_{\alpha,a} e_{\alpha,a})=0$.
By Lemma~\ref{su-ne}, we obtain that for each $\alpha\in J(q)$,
 $$\partial'(\sum_{a\in K(e_\alpha) \atop |a|=p+1}k_{\alpha,a}a)=0,$$ so $\sum_{a\in K(e_\alpha)
 \atop |a|=p+1}k_{\alpha,a}a$ is a cycle in $\mathcal{C}_p(K(e_\alpha))$. Since $K(e_\alpha)$ is acyclic,
 there exists a chain $c_\alpha$ in $\mathcal{C}_{p+1}(K(e_\alpha))$ such that
 $\partial' c_\alpha=\sum_{a\in K(e_\alpha) \atop |a|=p+1}k_{\alpha,a}a$,
so $$\sum_{a\in K(e_\alpha) \atop |a|=p+1}k_{\alpha,a}e_{\alpha, a}
=e_{\alpha, \sum_{a\in K(e_\alpha) \atop |a|=p+1}k_{\alpha,a}a}
=e_{\alpha, \partial' c_\alpha}=\partial_2 (e_{\alpha, c_\alpha}).$$
Therefore, we conclude that
$$\sum_{\alpha\in J(q)}\sum_{a\in K(e_\alpha) \atop |a|=p+1}k_{\alpha,a} e_{\alpha,a}=\partial_2 (\sum_{\alpha\in J(q)}e_{\alpha, c_\alpha})$$
is also a boundary chain, as desired.
\end{proof}
\vskip .2cm

Define $E^1_{p,q}(K)$ as the $q$-th homology group of the $p$-th row in the above diagram for $p\geq 0$, and when $p<0$, let  $E^1_{p,q}(K)=0$.
Let $\IM_{p,q}^K\partial_2=\IM(D^K_{p,q}(X)\xrightarrow[]{\partial_2}
D^K_{p-1,q}(X))$. Then we have the induced chain complex:
\[
\begin{CD}
0\longleftarrow\IM_{p,0}^K\partial_2 \longleftarrow \IM_{p,1}^K\partial_2\longleftarrow \IM_{p,2}^K\partial_2 \longleftarrow \cdots
\longleftarrow \IM_{p,q}^K\partial_2\longleftarrow\cdots
\end{CD}
\]
Let $A^1_{p,q}(K)$ be the $q$-th homology group of this chain complex. Note that when
$p<0$,  let $A^1_{p,q}(K)=A^1_{0, p+q}(K)$.

\vskip .2cm
Because every column in the above diagram is exact,  we have the following short exact sequence:
\[
\begin{CD}
0 @>>> \IM_{p+1,*}^K\partial_2@>>> D^K_{p,*}(X) @>\partial_2>>
\IM_{p,*}^K\partial_2 @>>> 0
\end{CD}
\]
Furthermore, we may obtain the following long exact sequence:
$$
\cdots\xrightarrow[]{i} A^1_{p+1,q}(K) \xrightarrow[]{j} E^1_{p,q}(K) \xrightarrow[]{k} A^1_{p,q}(K)
\xrightarrow[]{i} A^1_{p+1,q-1}(K) \xrightarrow[]{j}
E^1_{p,q-1}(K)\xrightarrow[]{k}\cdots
$$
Then we can induce the following exact couple
$$\xymatrix{
A(K) \ar[rr]^i&&A(K) \ar[dl]^j\\
  &E(K)\ar[lu]^k& }$$ and the  spectral sequence $E^1_{p,q}(K), E^2_{p,q}(K), ..., E^\infty_{p,q}(K)$ such that
  the $r$-th differential $\text{\bf d}_r$ on $E^r_{p,q}(K)$ is of bidegree $(-r, r-1)$.

  \begin{rem}\label{complex}
We can explicitly write out the  differential $\text{\bf d}_1=j\circ k$
 of the  chain complex
$$
 \cdots\xrightarrow[]{j\circ k} E^1_{p,q}(K) \xrightarrow[]{j\circ k} E^1_{p-1,q}(K) \xrightarrow[]{j\circ k} \cdots\xrightarrow[]{j\circ k} E^1_{0,q}(K)
\longrightarrow0.
$$
At first, for $a\in K$, given an element $\beta_a\in H_*(X_a)$. Let
$b\subset a$. Then we define $\beta_{a,b}$ as the image of $\beta_a$
under the map $H_*(X_a)\rightarrow H_*(X_b)$ induced by the natural
imbedding $X_a\hookrightarrow X_b$. Now if $c=\sum_i\lambda_ib_i$ is
a chain of the simplicial chain complex of $K$ where $b_i\subset a$,
define $\beta_{a,c}$ as $\sum_i\lambda_i\;\beta_{a,b_i}$. Next, by
the definition of $E(K)$, we know that
\begin{equation}\label{e1}
E^1_{p,q}(K)\cong \bigoplus_{a\in K\atop |a|=p+1}H_q(X_a)
\end{equation}
so we can write its element as $\sum_{a\in K\atop |a|=p+1}\beta_a$.
Moreover, we see easily that
$$j\circ k(\sum_{a\in K\atop
|a|=p+1} \beta_a)=\sum_{a\in K\atop
|a|=p+1} \beta_{a,\partial'
a}.$$
In particular, when $q=0$ and every $X_a$ is connected, the chain
complex
$$
\cdots\xrightarrow[]{j\circ k} E^1_{p,0}(K) \xrightarrow[]{j\circ k} E^1_{p-1,0}(K) \xrightarrow[]{j\circ k}\cdots \xrightarrow[]{j\circ k} E^1_{0,0}(K)\longrightarrow0
$$
is isomorphic to the simplicial chain complex of $K\setminus
\{\emptyset\}$. Thus, $E^2_{p,0}(K)$ is isomorphic to $H_p(K\setminus
\{\emptyset\})$. Note that
since we have assumed that $\emptyset\in K$, $H_p(K)$ is isomorphic to the reduced homology $\widetilde{H}_k(K\setminus
\{\emptyset\})$.
\end{rem}

By the theory of spectral sequence, we have that
\begin{thm}\label{spectral}
$$H_i(X)\cong \bigoplus_{p+q=i}E^\infty_{p,q}(K)$$
\end{thm}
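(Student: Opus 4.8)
The plan is to recognize the spectral sequence $\{E^1_{p,q}(K),\dots,E^\infty_{p,q}(K)\}$ as the one attached to the double complex $(D^K_{*,*}(X),\partial_1,\partial_2)$ by first taking homology in the $\partial_1$-direction, and then to compute the homology of the underlying total complex by a second, transverse passage in which the whole thing collapses onto $H_*(X)$. The crucial starting observation is that the $(-1)$st column of the double complex is literally the cellular chain complex of $X$: the only $a\in K$ with $|a|=0$ is $a=\emptyset$, and $X_\emptyset=X$, so $D^K_{-1,q}(X)=D_q(X)$, with $\partial_1$ there being the cellular boundary $\partial$ and $\partial_2\colon D^K_{0,q}(X)\to D^K_{-1,q}(X)$ the augmentation. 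Under this identification, the Proposition above says exactly that the augmented columns
$$0\longleftarrow D_q(X)\longleftarrow D^K_{0,q}(X)\longleftarrow D^K_{1,q}(X)\longleftarrow\cdots$$
(all arrows $\partial_2$) are exact for every $q$; equivalently, $\{D^K_{p,q}(X)\}_{p\ge0}$ is a resolution of $D_*(X)$ along the $p$-direction.

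First I would run the transverse spectral sequence of the double complex, i.e.\ take $\partial_2$-homology first. By exactness of the augmented columns, its $E^1$-page is concentrated in the single column $p=0$, where it equals $D_q(X)$; since $\partial_1\partial_2=\partial_2\partial_1$ and $\partial_1$ agrees with the cellular boundary on the $(-1)$st column, the induced differential there is the cellular $\partial$, so its $E^2$-page is $H_q(X)$ at $p=0$ and zero elsewhere, and it collapses. Hence the total complex $\mathrm{Tot}_n=\bigoplus_{p+q=n}D^K_{p,q}(X)$ $(p\ge0)$ satisfies $H_n(\mathrm{Tot})\cong H_n(X)$. Next I would identify the exact couple $(A(K),E(K))$ of the excerpt — for which $E^1_{p,q}(K)=H_q(D^K_{p,*}(X),\partial_1)$ — as the couple attached in the standard way (exploiting column-exactness) to the filtration of this same $\mathrm{Tot}$ by $p$-degree; the concrete anchor is $\IM^K_{0,*}\partial_2=D_*(X)$, which yields $A^1_{0,q}(K)=H_q(D_*(X),\partial)=H_q(X)$. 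Its derived spectral sequence therefore converges to $H_*(\mathrm{Tot})\cong H_*(X)$, so $H_i(X)$ carries a finite filtration with graded pieces $E^\infty_{p,q}(K)$, $p+q=i$; this is the asserted isomorphism.

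The one genuinely technical step — the main obstacle — is convergence, i.e.\ making sure the filtration on $H_*(\mathrm{Tot})$ is finite in each degree so that the $E^\infty$-terms really do reassemble $H_*(X)$. Here finiteness of the index set is what saves the day: with only $N$ subcomplexes, $\dim K\le N-1$, so $D^K_{p,q}(X)=0$ unless $0\le p\le N-1$ and $q\ge0$, and in each total degree only finitely many bidegrees contribute; the double complex is essentially first-quadrant, and the associated spectral sequence converges strongly with no further hypotheses. Equivalently, in the couple one has $A^1_{p,q}(K)=0$ for $p>\dim K$ and $A^1_{p,q}(K)=H_{p+q}(X)$ for $p\le0$, so the iterated images under $i$ stabilize in every bidegree after finitely many steps. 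The remaining items — that $\partial_1$ and $\partial_2$ commute (already checked in the excerpt), that the transverse $E^1$-differential is the cellular boundary, the matching of the two spectral sequences, and (over $\Z$) the usual caveat about splitting the filtration on $H_i(X)$ — are routine, so the write-up should concentrate on making the convergence-and-reassembly precise.
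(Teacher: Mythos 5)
Your proposal is correct, and in fact it supplies more than the paper does: the paper's ``proof'' of this theorem consists of constructing the exact couple from the short exact sequences $0\to \IM_{p+1,*}^K\partial_2\to D^K_{p,*}(X)\to \IM_{p,*}^K\partial_2\to 0$ and then invoking ``the theory of spectral sequence,'' with convergence left implicit. Your primary route is genuinely different in packaging: you introduce the total complex $\mathrm{Tot}$ (which the paper never mentions), use the column-exactness Proposition to collapse the column-first spectral sequence onto the cellular complex $D_*(X)$ sitting in the $p=-1$ slot, conclude $H_*(\mathrm{Tot})\cong H_*(X)$, and then read off the theorem from the $p$-filtration of $\mathrm{Tot}$, with strong convergence coming for free from $0\le p\le \dim K\le N-1$, $q\ge 0$. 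What this buys is a transparent convergence statement; the one step you should not wave off as entirely routine is the identification of the paper's exact-couple spectral sequence with the filtration spectral sequence of $\mathrm{Tot}$, since the paper's couple is not literally the filtration couple --- one has to produce the comparison (e.g.\ via the chain maps $F_p\mathrm{Tot}\to(\IM^K_{p,*}\partial_2,\partial_1)$ given by projecting to the top filtration quotient and applying $\partial_2$), checking compatibility with $i,j,k$ at the $E^1$ level. Your secondary remark is the argument that matches the paper's construction most closely and makes the comparison unnecessary: since $\IM^K_{0,*}\partial_2=D_*(X)$ gives $A^1_{0,q}(K)\cong H_q(X)$ (with $A^1_{p,q}(K)=H_{p+q}(X)$ for $p\le 0$ by the paper's convention) and $A^1_{p,q}(K)=0$ for $p>\dim K$, the couple stabilizes in each total degree after finitely many applications of $i$, so $E^\infty_{p,q}(K)$ are the graded pieces of a finite filtration of $H_{p+q}(X)$ --- which, as you note with the usual caveat about splitting over $\Z$, is exactly the content of the theorem as the paper uses it.
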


Generally, $K$ may have a very complicated structure. This will lead to a difficulty for calculating
  the spectral sequence $E^1_{p,q}(K),E^2_{p,q}(K),...,E^\infty_{p,q}(K)$ induced by the double complex $D^K_{*,*}(X)$. For the purpose of our application, we shall choose a suitable subcomplex of $K$, so that this may give a simpler
  calculation.

\begin{defn}\label{local}
A subcomplex $L$ of $K$ is said to be {\em locally nice} if $L$ satisfies the following properties:
\begin{enumerate}
\item[$\bullet$] $L$ contains all vertices of $K$ and the empty set $\emptyset$.
\item[$\bullet$] For each cell $e_\alpha$ of $X$, $L(e_\alpha)=\{a\in L| e_\alpha\subset X_a\}$ is acyclic.
\end{enumerate}
\end{defn}

Now let $L$ be a locally nice subcomplex of $K$. Similarly, we can define a double complex
$D_{*,*}^L(X)=\{D_{p,q}^L(X)\}$, where $D^L_{p,q}(X)=\bigoplus_{a\in L, |a|=p+1}D_q(X_a)$. Then we see that
for each $q$,
\[
\begin{CD}
0@<\partial_2<<D^L_{-1,q}(X) @<\partial_2<< D^L_{0,q}(X)  @<\partial_2<< \cdots @<\partial_2<< D^L_{p,q}(X)@<\partial_2<<\cdots\\
\end{CD}
\]
is still exact since $L(e_\alpha)=\{a\in L| e_\alpha\subset X_a\}$ is acyclic for each cell $e_\alpha$ of $X$.
Thus, we can induce a corresponding spectral sequence $E^1_{p,q}(L),E^2_{p,q}(L),...,E^\infty_{p,q}(L)$
such that
$$H_i(X)\cong\bigoplus_{p+q=i}E^\infty_{p,q}(L).$$

\vskip .2cm

It should be pointed out that there is also a cohomological version of the above argument. Namely, we can obtain a spectral sequence $E_1^{p,q}(L),E_2^{p,q}(L),...,E_\infty^{p,q}(L)$ from $(X, L)$ such that
$$H^i(X)\cong \bigoplus_{p+q=i}E_\infty^{p,q}(L).$$

\end{document}